\documentclass[letterpaper,11pt,reqno]{amsart}
\usepackage[margin=1in]{geometry}
\usepackage[T1]{fontenc}
\usepackage[utf8]{inputenc}
\usepackage{lmodern}
\usepackage{amsthm,amsmath,amsfonts,mathtools,amssymb}
\usepackage{graphicx,verbatim,subfigure,graphicx,yhmath,multirow}
\usepackage[dvipsnames]{xcolor}
\usepackage[foot]{amsaddr}
\usepackage[colorlinks=true,%
    breaklinks,%
    linkcolor=blue,%
    anchorcolor=blue,%
    citecolor=blue,%
    urlcolor=blue]{hyperref}
\definecolor{shadecolor}{gray}{0.8}
\usepackage{mathrsfs,xfrac,amsbsy,latexsym,dsfont,bm} 
\usepackage[textsize=tiny]{todonotes}

\usepackage{enumitem}
\newenvironment{enumeratei}{\begin{enumerate}[label=\textup{\roman*.},leftmargin=2em,itemsep=1ex]}{\end{enumerate}}
\newenvironment{enumeratea}{\begin{enumerate}[label=\textup{\alph*)},leftmargin=2em,itemsep=1ex]}{\end{enumerate}}

\newtheorem{thm}{Theorem}[section]

\newtheorem{lem}[thm]{Lemma}

\newtheorem{cor}[thm]{Corollary}

\newtheorem{prop}[thm]{Proposition}

\newtheorem{defn}[thm]{Definition}

\theoremstyle{remark}
\newtheorem{rem}[thm]{Remark}

\newtheorem{ass}[thm]{Assumption}

\numberwithin{equation}{section}

\renewcommand{\le}{\leqslant} 
\renewcommand{\ge}{\geqslant} 
\newcommand{\eset}{\varnothing}

\newcommand{\ind}{\mathds{1}}
\newcommand{\eps}{\varepsilon}
\newcommand{\To}{\longrightarrow}
\newcommand{\norm}[1]{\left\lVert#1\right\rVert}

\newcommand{\abs}[1]{\left\vert#1\right\vert}

\newcommand{\ie}{\emph{i.e., }}

\renewcommand{\iff}{\Leftrightarrow} 
\newcommand{\tod}{\stackrel{\mathrm{(d)}}{\longrightarrow}}
\newcommand{\toP}{\stackrel{\pr}{\to}} 
\newcommand{\equald}{\stackrel{\mathrm{d}}{=}}

\let\ga=\alpha \let\gb=\beta  \let\gd=\delta 
     \let\gl=\lambda        \let\go=\omega   \let\gs=\sigma \let\gt=\tau 
  
   \let\gL=\Lambda

\newcommand{\cB}{\mathcal{B}}\newcommand{\cC}{\mathcal{C}}
\newcommand{\cD}{\mathcal{D}}\newcommand{\cE}{\mathcal{E}}\newcommand{\cF}{\mathcal{F}}
\newcommand{\cH}{\mathcal{H}}

\newcommand{\cU}{\mathcal{U}}

\providecommand{\1}{\mathds{1}}

\newcommand{\mvgs}{\boldsymbol{\sigma}}

\newcommand{\dN}{\mathds{N}}

\newcommand{\dR}{\mathds{R}}

\newcommand{\dZ}{\mathds{Z}} 

\DeclareMathOperator{\E}{\mathds{E}}
\DeclareMathOperator{\pr}{\mathds{P}}
\DeclareMathOperator{\N}{N}
\DeclareMathOperator{\sgn}{sgn}
\DeclareMathOperator{\var}{Var}

\newcommand{\ceil}[1]{\lceil #1 \rceil}
\newcommand{\floor}[1]{\lfloor #1 \rfloor}

\let\hat\widehat
\let\tilde\widetilde

\usepackage{microtype}
\usepackage{tikz}
\usetikzlibrary{arrows.meta,decorations.pathreplacing,calc}
\allowdisplaybreaks[2]

\graphicspath{{figs/}}
\newcommand{\hub}{\mathscr{H}}       
\newcommand{\Nn}{N_n}        
\newcommand{\Dl}{D_{\ell}}   
\newcommand{\rhox}{\rho}     
\newcommand{\Hloc}{\cH_{\mathrm{loc}}}
\newcommand{\slv}{\lambda}   
\newcommand{\hop}{\mathsf{H}} 
\newcommand{\Ares}{\Delta}   

\newcommand{\cut}{\mathrm{cut}}

\begin{document}
\title[Scaling limit for linear 1--D LRFPP]{Scaling limit and universality for one-dimensional long-range first-passage metric in the linear-growth regime}
\author[Chatterjee]{Shirshendu Chatterjee$^\star$}
\author[Dey]{Partha S.~Dey$^\dag$}
\date{}
\address{$^\star$Department of Mathematics, City University of New York, New York, NY, USA}
\address{$^\dag$Department of Mathematics, University of Illinois Urbana--Champaign, Urbana, Illinois 61801}
\email{$^\star$shirshendu@ccny.cuny.edu, $^\dag$psdey@illinois.edu}

\begin{abstract}
We prove universal Gaussian and stable fluctuation limits for one-dimensional long-range first-passage percolation with edge times $\abs{x-y}^{\ga}\go_{x,y}$.
The positive i.i.d.~weights satisfy $\pr(\go\le t)=t^{\theta+o(1)}$ as $t\downarrow0$ and $\E(\go^\zeta)<\infty$ for some $\theta,\zeta>0$.
For $\ga>1+2/\theta$, the passage time $T(0,n)$ has a positive deterministic linear growth rate $\mu$.
For $\ga>1+3/\theta$, we establish a Gaussian central limit theorem with linear asymptotic variance, a functional version, and limit theorems for geodesic hop and cut point counts.
For $1+2/\theta<\ga<1+3/\theta$, assuming a regularly varying lower tail, we obtain a totally left-skewed stable limit with index $\gb=\theta(\ga-1)-1$ and an explicit scale, together with convergence to a spectrally negative stable L\'evy process in the Skorokhod $M_1$ topology.
The proof constructs i.i.d.~blocks between hubs through which every crossing geodesic must pass.
A spread-out comparison metric makes this regeneration available for every $\zeta>0$, even when the edge law has infinite mean. The key estimate identifies the exact left-tail asymptotic of the centered block reward, generated by a single fast long edge. A monotone within-block approximation yields the functional stable limit.
\end{abstract}


\maketitle
\setcounter{tocdepth}{1}\tableofcontents

\section{Introduction}\label{sec:intro}

\subsection{Model}\label{ssec:model}
Let $\{\go_{x,y}: x<y,\ x,y\in\dZ\}$ be i.i.d.~copies of a random variable $\go$ with distribution function $F(t)=\pr(\go\le t)$.
Define $\go_{y,x}=\go_{x,y}$ and assign the edge $e=\{x,y\}$ the passage time
\begin{align}\label{eq:edge-weight}
	t_e=t_{x,y}:=\abs{x-y}^{\ga}\go_{x,y},\quad \ga>0.
\end{align}
We write $d_e:=\abs{x-y}$ for the length of $e$.
For a finite path $\pi=(x_0,x_1,\ldots,x_m)$, let $t(\pi)=\sum_{i=1}^m t_{x_{i-1},x_i}$, and define the first-passage time
\begin{align}\label{eq:T}
	T(x,y):=\inf_{\pi:x\to y}t(\pi),\quad T_n:=T(0,n).
\end{align}
When all vertices of $\pi$ are required to lie in a set $I$ we write $T_I(x,y)$ for the restricted passage time.
The graph is the complete graph on $\dZ$, so every vertex has infinitely many neighbors.
In the regime considered here $T(x,y)$ is attained by a finite path (Lemma~\ref{lem:pivotal}).

\begin{ass}\label{ass:law}
	The law of $\go$ is non-degenerate and satisfies the following.
	\begin{enumeratei}
		\item $\pr(0<\go<\infty)=1$.
		\item For some $\theta\in(0,\infty)$,
		\begin{align}\label{eq:lower-tail}
			\lim_{t\downarrow0}\frac{\log F(t)}{\log t}=\theta.
		\end{align}
		\item $\E(\go^{\zeta})<\infty$ for some $\zeta>0$.
	\end{enumeratei}
\end{ass}

Condition~\eqref{eq:lower-tail} means that for every $\eps>0$ there are $t_\eps,c_\eps,C_\eps>0$ with
\begin{align}\label{eq:two-sided-F}
	c_\eps t^{\theta+\eps}\le F(t)\le C_\eps t^{\theta-\eps},\quad 0<t<t_\eps.
\end{align}
Only the upper bound and the fact that $0\in\operatorname{supp}\omega$ are used for the Gaussian theorem. The stable theorem needs the more precise regular-variation hypothesis of Assumption~\ref{ass:rv} below.
The exponential law of~\cite{CD16} has $\theta=1$.
The law of $\go=U^{1/\theta}$ with $U$ uniform has exactly $F(t)=t^{\theta}$ on $[0,1]$.
A gamma distribution with shape parameter $\theta$ and a Weibull law with shape parameter $\theta$ satisfy~\eqref{eq:lower-tail}.
The upper-tail parameter $\zeta$ can be arbitrarily small, so $\go$ may have infinite mean and infinite variance.

	The passage time $\abs{x-y}^{\ga}\go_{x,y}$ combines two features.
	The deterministic factor $\abs{x-y}^{\ga}$ with $\ga>1/\theta$ makes a direct long jump more expensive than the same distance covered in short steps of typical cost.
	The metric collapses when $\ga<1/\theta$, see~\cite{CD16,CDK26}.
	The random factor $\go_{x,y}$ is the disorder.
	Its lower tail decides how often a long edge is cheap enough to compete with a linear route,
	and a sufficiently large fixed family of alternative local routes controls the effect of its upper tail,
		which is the reason no moment of $\go$ beyond $\E(\go^\zeta)<\infty$ is needed.
	The complete graph on $\dZ$ is the natural one-dimensional setting in which every pair of sites interacts,
		as in L\'evy-flight dispersal or long-range epidemic models.
	The factors $1/\theta$ in the thresholds suggest a formal effective-dimension analogy (see \S\ref{ssec:heuristics}).
We work throughout in the linear regime
\begin{align}\label{eq:linear-regime}
	\ga>1+2/\theta.
\end{align}
The heuristic reason for this threshold, and for the second threshold $\ga=1+3/\theta$,
	is explained in \S\ref{ssec:heuristics}.
In short, an edge of length $d$ can compete with a route of linear cost only if $\go\lesssim d^{1-\ga}$,
	an event of probability $d^{-\theta(\ga-1)+o(1)}$.
About $d$ such edges cross a given cut.
A positive density of cuts uncrossed by such edges needs $\sum_d d\cdot d^{-\theta(\ga-1)}<\infty$,
	whereas a finite second moment for the spacing between such cuts needs $\sum_d d^2\cdot d^{-\theta(\ga-1)}<\infty$.

\begin{rem}[Conjecture in higher dimension]
			A formal higher-dimensional count using $n^d$ candidate endpoints
and a nearest neighbor first-passage-time fluctuation exponent $\chi_d>0$ suggests that
$$\ga_c=1+\frac d\theta\left(1+\frac1\chi_d\right).$$ That is, for the long-range first-passage percolation model on $\dZ^{d}$ with $t_{x,y}=\norm{x-y}^{\alpha}\omega_{x,y}$  and $\omega$ satisfying Assumption~\ref{ass:law}, the fluctuation behavior should be universal for $\ga>\ga_c$. Here $\chi_{d}$ is the conjectured passage-time fluctuation exponent  for nearest-neighbor first-passage percolation  on $\dZ^{d}$. 
This is only a heuristic,  the geometry of the relevant endpoints and geodesics must also be controlled to make it rigorous. In this article, we verify the $d=1$ dimensional case, where the linearity threshold is $\ga>1+2/\theta$, $\chi_{1}=1/2$ and $\ga_{c}=1+3/\theta$.  
	\end{rem}

\subsection{Main results}\label{ssec:main}
Our first result is a law of large numbers and a Gaussian central limit theorem above the second threshold, valid for every $\zeta>0$.

\begin{thm}[Gaussian fluctuations]\label{thm:main}
	Suppose Assumption~\ref{ass:law} holds and
	\begin{align}\label{eq:alpha-condition}
		\ga>1+3/\theta.
	\end{align}
	Then there are constants $\mu\in(0,\infty)$ and $\gs^2\in(0,\infty)$ such that, as $n\to\infty$,
	\begin{align}
		& T_n/n\To\mu\text{ a.s.~and in }L^1,\label{eq:LLN-main}                 \\
		& (T_n-\mu n)/\sqrt{n}\tod \N(0,\gs^2),\label{eq:CLT-main}               \\
		& \E T_n/n\To\mu \text{ and } \var(T_n)/n\To\gs^2.\label{eq:mom-main}
	\end{align}
\end{thm}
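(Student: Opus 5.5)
The plan is a regeneration (renewal-reward) argument, as announced in the abstract. Call a cut point $x\in\dZ$ \emph{good} if no edge $\{u,v\}$ with $u<x<v$ is fast, i.e.\ has $\go_{u,v}\le c\,|u-v|^{1-\ga}$ for a small constant $c$. In addition we require a local ``hub'' structure near $x$: a fixed family of $K$ disjoint short alternative routes through a small window around $x$, at least one of which has bounded cost. Every such path is forced to be cheaper than any edge jumping over the window. Write $h_0<h_1<\cdots$ for the successive hubs to the right of $0$.

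\textbf{Step 1 (hubs are pivotal).} Using Lemma~\ref{lem:pivotal}, I will show that every geodesic from $a<h_i$ to $b>h_{i+1}$ passes through the hub window. Consequently
\[
T(h_i,h_j)=\sum_{k=i}^{j-1}T_{[h_k,h_{k+1}]}(h_k,h_{k+1})
\]
up to bounded boundary corrections.

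\textbf{Step 2 (i.i.d.\ blocks).} The event ``$x$ is a hub'' depends on edges crossing $x$, so the hub sequence is not literally a renewal sequence. To fix this I will use the spread-out comparison metric: resample or dominate the crossing edges so that the blocks $(h_{k+1}-h_k,\ T_{[h_k,h_{k+1}]}(h_k,h_{k+1}))$ become i.i.d. (1-dependent would also suffice), with the exploration stopped at a stopping time in the scan from left to right.

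\textbf{Step 3 (tails of spacing and reward).} I need $\E(h_1-h_0)^{2+\delta}<\infty$, which is where $\ga>1+3/\theta$ enters. The number of fast edges crossing a window of length $L$ is dominated by
\[
\sum_d d\cdot d^{-\theta(\ga-1)+\eps},
\]
and the failure probability of a cut decays polynomially with exponent $\theta(\ga-1)-2-\eps>1$. A standard renewal-type "first clean cut" argument then converts this into tail bounds $\pr(h_1-h_0>L)\le L^{-(\theta(\ga-1)-1)+\eps}$. That exponent exceeds $2$ exactly when $\ga>1+3/\theta$.

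For the block reward $R$, I bound it by the cost of the cheapest of $K$ disjoint nearest-neighbour-type routes. Since $\pr(\min_{j\le K}\go^{(j)}>t)\le(\E\go^\zeta)^K t^{-K\zeta}$, choosing $K>2/\zeta$ gives $\E R^{2+\delta}<\infty$ for every $\zeta>0$.

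\textbf{Step 4 (limit theorems).} With i.i.d.\ pairs having finite second moments:
\begin{itemize}
\item The renewal-reward SLLN gives $T_n/n\to\mu=\E R/\E(h_1-h_0)$ almost surely. The lower bound $\mu>0$ comes from $T_n\ge$ the sum of the block rewards, each of which is positive.
\item Anscombe's theorem applied to $\sum_{k\le N(n)}(R_k-\mu\,\Delta_k)$ gives the CLT with
\[
\gs^2=\var(R-\mu\Delta)/\E\Delta .
\]
\item Boundary terms, namely $T(0,h_0)$ and the partial block at $n$, are $O_P(1)$. More precisely, they have bounded $2+\delta$ moments, uniformly in $n$.
\end{itemize}
Uniform integrability of $(T_n-\mu n)^2/n$, obtained from Rosenthal or Marcinkiewicz–Zygmund bounds at $2+\delta$ moments, then upgrades these to convergence of the mean and variance, and to $L^1$ convergence.

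For $\gs^2>0$ I plan to use non-degeneracy of $\go$: condition on a block structure in which $R-\mu\Delta$ varies with some interior edge weight. The non-degenerate law of that weight then gives positive conditional variance.

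The main obstacle is Steps 2–3 together: making the pivotal hub event simultaneously a stopping-time event, independent across blocks, and heavy-tail controlled. The difficulty is that long fast edges from far away can cross any candidate cut. I would first try the comparison metric with enlarged thresholds, so that cuts which are good for the dominating configuration are good for the true one.
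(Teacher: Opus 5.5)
There is a genuine gap, and it starts with the definition of a hub, before the obstacle you identify. Your window routes can play the role of the paper's portal $Q_m$, but that event only handles crossing edges of length at most $B$. For a long edge $e=\{u,v\}$ over $x$ that is not fast you know only $t_e>c(v-u)$. Rerouting through $x$ costs $T(u,x)+T(x,v)$, and nothing local bounds $T(u,x)$ when $u$ is far from $x$: with heavy-tailed $\go$ the metric can be slow over long stretches on either side. So Step~1 fails. Lemma~\ref{lem:pivotal}, which you invoke, is proved only through the bound \eqref{eq:linear-local}. Taking $c$ small also goes the wrong way, since $c$ must exceed the slope of a detour bound valid for \emph{all} $u<x<v$.

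The missing ingredient is the two-sided linear-control event $A_m^-\cap A_m^+$ of \eqref{eq:Aplus}--\eqref{eq:Aminus}. It says that $m$ is a two-sided ladder epoch of the walk $\sum_q(Y_q-a)$ built from the spread-out cell envelopes, with $a=2\E Y_0$. On this event $\Dl(u,x)\le C_0(x-u+B)$ and $\Dl(x,v)\le C_0(v-x+B)$, and an edge is called forbidden exactly when $t_e\le C_0(d_e+2B)$ with the same $C_0$. The same event gives $0\le G\le C_0L$ in \eqref{eq:G-bound}, and hence $\E^0G^2<\infty$ from $\E^0L^2<\infty$.

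Your reward bound does not replace this. The displayed estimate controls the minimum of $K$ single weights, not the cost of $K$ routes each of which has order $\Delta$ edges, where $\Delta$ is random and depends on the environment. Heuristically such routes only give $\pr(R>t\mid\Delta)\lesssim(C\Delta t^{-\zeta})^K$, so $\E R^q$ is controlled only by $\E\Delta^{q/\zeta}$. Amplification must be done on cells of fixed size, as in Lemma~\ref{lem:amplify}. Once the ladder condition is added, the local events are no longer finite-range, and your spacing argument needs the truncation of Lemma~\ref{lem:loc}. That truncation uses polynomial large deviations with $\E Y_0^p<\infty$ for large $p$, which is why the paper takes $\zeta\ell>p$ rather than $K>2/\zeta$.

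Step~2 is not yet an argument, and the tools you suggest do not fit. The hub event involves both half-lines and every crossing edge, so hubs are not stopping times of a left-to-right scan, and resampling or domination does not give exact independence. The paper instead proves an exact cut structure (Lemma~\ref{lem:cut}): $H_m=H_m^-\cap H_m^\times\cap H_m^+$, with the three factors measurable with respect to the disjoint families $\cE^-_{mB},\cE^\times_{mB},\cE^+_{mB}$. It adds a localization step: on $H_m$, the event $H_{m'}$ for $m'>m$ depends only on $\cE^+_{mB}$. This holds because $A^-_{m'}$ is implied by its finite part given $A^-_m$, and a forbidden edge crossing $m'B$ cannot cross the uncovered boundary $mB$. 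Keeping the overlapping factors $H_0^\times\cap H_{m'}^\times$ as one joint factor gives $\pr(H_0,\hub_1=m'B,A,\gt_{m'B}A')=\pr(H_0,\hub_1=m'B,A)\,\pr(H_0^+\cap A')/\pr(H_0^+)$. This is exactly the statement that blocks are i.i.d.\ under the Palm law. The ladder condition missing from your hub is also what makes localization work.

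Two smaller corrections. First, the boundary terms do not have bounded $2+\gd$ moments. By \eqref{eq:palm-stat}, $\E\hub_1^2<\infty$ is equivalent to $\E^0L^3<\infty$. Long forbidden edges from the hub give $\pr^0(L>u)\ge u^{1-\theta(\ga-1)-o(1)}$, so this fails for $1+3/\theta<\ga<1+4/\theta$. What holds, and suffices, is $\E(\hub_1^2;\hub_1\le n)=o(n)$, together with a H\"older bound on $\{\hub_1>n\}$ using $T_n\le\sum_mY_m$. Second, for $\gs^2>0$ you need an edge whose weight varies while the block length stays fixed. Lemma~\ref{lem:nondegenerate} uses the forbidden edge $\{1,3B-1\}$ covering the interior boundaries $B$ and $2B$, so conditioning on $L=3B$ constrains its weight only from above.
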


By~\eqref{eq:CLT-main},~\eqref{eq:mom-main} and Slutsky's theorem, $(T_n-\E T_n)/\sqrt{\var(T_n)}\tod \N(0,1)$.
No first or second moment of $\go$ is required.
The constants have a regenerative description.
Let $\cdots<\hub_{-1}<\hub_0\le0<\hub_1<\hub_2<\cdots$ be the hubs constructed in Section~\ref{sec:hubs}.
Define
\begin{align}\label{eq:LG}
	L_i:=\hub_{i+1}-\hub_i,\quad G_i:=T(\hub_i,\hub_{i+1}),\quad i\in\dZ.
\end{align}
The pairs $(L_i,G_i)$, $i\ne0$, are i.i.d. We write $(L,G)$ for a pair with their common law and denote expectation and variance under this Palm block law by $\E^0$ and $\var^0$.
Then
\begin{align}\label{eq:constants}
	\mu={\E^0 G}/{\E^0 L},\quad \gs^2={\var^0(G-\mu L)}/{\E^0 L}.
\end{align}

The same proof gives a process-level statement and limit theorems for two geometric quantities of the geodesic.
Let $D[0,\infty)$ carry the Skorokhod $J_1$ topology. Define a vertex $v$ to be a \emph{cut point} for the random geometry, if there exists a finite (random) integer $k\ge1$ such that $v$ belongs to all geodesics from $u$ to $w$ for $u<v-k, w>v+k$. The hubs constructed in this paper are cut points (Lemma~\ref{lem:pivotal}).
Write $\cut_n$ for the number of cut points in $(0,n]$,
	and let $\hop_n$ be the smallest number of edges of a geodesic from $0$ to $n$.
A geodesic between consecutive hubs is a simple path inside their block,
	so it has at most as many edges as the block has vertices.

\begin{cor}[Functional CLT]\label{cor:fclt}
	Under the assumptions of Theorem~\ref{thm:main},
	\begin{align*}
		\big( (T(0,\floor{nt})-\mu nt)/\sqrt n\big)_{t\ge0}
		\tod \big(\gs B(t)\big)_{t\ge0}
	\end{align*}
	in $D[0,\infty)$, where $B$ is a standard Brownian motion.
\end{cor}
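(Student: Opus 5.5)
The functional CLT will follow from a regenerative, renewal-reward decomposition. The ingredients are the i.i.d.\ structure of the blocks $(L_i,G_i)_{i\ne0}$ from \eqref{eq:LG}, the moment bounds on $L$ and $G$ that underlie Theorem~\ref{thm:main}, and the fact (Lemma~\ref{lem:pivotal}) that every hub is a cut point, so passage times are additive across hubs.

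Let $N(m):=\max\{i\ge0:\hub_i\le m\}$ for $m\ge0$. Since every geodesic from $0$ to $\floor{nt}$ passes through each hub in between, we can write
\begin{align*}
T(0,\floor{nt})=T(0,\hub_1)+\sum_{i=1}^{N(\floor{nt})-1}G_i+T(\hub_{N(\floor{nt})},\floor{nt})
\end{align*}
whenever $N(\floor{nt})\ge1$. Setting $X_i:=G_i-\mu L_i$, we get
\begin{align*}
T(0,\floor{nt})-\mu\floor{nt}=\sum_{i=1}^{N(\floor{nt})-1}X_i+R_n(t),
\end{align*}
where $R_n(t)$ collects two boundary terms: the first and last partial blocks, and $\mu$ times the corresponding lengths.

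\textbf{Step 1 (block-level invariance principle).} The $X_i$, $i\ge1$, are i.i.d.\ with $\E^0X=0$ by \eqref{eq:constants}. They have $\E^0X^2<\infty$; this is the square-integrability of $L$ and $G$ already needed for \eqref{eq:mom-main}, and it holds precisely because $\ga>1+3/\theta$. Donsker's theorem then gives $\big(k^{-1/2}\sum_{i\le \floor{ks}}X_i\big)_s\tod(\var^0(X))^{1/2}B$ in $D[0,\infty)$.

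\textbf{Step 2 (random time change).} By the renewal theorem, $N(\floor{nt})/n\to t/\E^0L$ a.s., uniformly on compacts, since the limit is continuous and monotone. The composition map is continuous at continuous limits, so the random change of time (Billingsley, Thm.~14.4) yields
\begin{align*}
n^{-1/2}\sum_{i=1}^{N(\floor{nt})-1}X_i\tod \Big(\frac{\var^0(X)}{\E^0L}\Big)^{1/2}B(t)=\gs B(t).
\end{align*}

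\textbf{Step 3 (boundary terms).} It remains to show $\sup_{t\le K}|R_n(t)|/\sqrt n\to0$ in probability for each $K$; this is the main obstacle. The remainder involves the initial block and the last partial block. The last partial block contributes at most
\begin{align*}
\max_{1\le i\le N(nK)+1}\Big(\sup_{\hub_i\le m<\hub_{i+1}}|T(\hub_i,m)|+\mu L_i\Big).
\end{align*}
The key is to bound within-block passage times by a block quantity with a finite second moment. We use $T(\hub_i,m)\le T(\hub_i,\hub_{i+1})+T(m,\hub_{i+1})$, and both terms are at most the restricted passage times inside the block. If necessary, we would dominate them by the block variable used to prove the variance bound in Theorem~\ref{thm:main}, for instance the nearest-neighbour path cost inside the block compared via the spread-out comparison metric. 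Alternatively, we can use $0\le T(\hub_i,m)\le$ the geodesic-based bound through the cut structure. Once $M_i:=\sup_{\hub_i\le m<\hub_{i+1}}T(\hub_i,m)+\mu L_i$ is i.i.d.\ with $\E M^2<\infty$, the standard estimate $\max_{i\le Cn}M_i=o(\sqrt n)$ in probability follows from $n\pr(M>\eps\sqrt n)\to0$. The initial term $T(0,\hub_1)$ is a.s.\ finite and does not depend on $n$, so divided by $\sqrt n$ it tends to $0$. Combining these three steps with Slutsky's theorem in $D[0,\infty)$ gives the corollary.
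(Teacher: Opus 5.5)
Your decomposition, the Donsker step and the random time change via the renewal strong law are the same as the paper's argument. The gap is in Step~3. You correctly call it the crux, but you never close it. You list several candidate dominating variables without proving a finite second moment for any of them.

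The one you name concretely, the nearest-neighbour path cost inside the block, fails. Assumption~\ref{ass:law} only gives $\E(\go^\zeta)<\infty$ for some small $\zeta>0$, so a sum of nearest-neighbour weights $\go_{j,j+1}$ over a block need not have even a first moment (see Remark~\ref{rem:nn}). Appealing to restricted block passage times, or to the spread-out envelopes of the block's cells, does not help either. By confinement the restricted time is just $T(m,\hub_{i+1})$ again. Its moments under the Palm law are exactly what you have to establish, not something you can assume.

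The missing ingredient is the linear control built into the hub definition. Proposition~\ref{prop:regeneration}.(ii), which rests on \eqref{eq:linear-local} on $A_m^-\cap A_m^+$, gives $T(x,v)\le C_0(v-x+B)$ for every hub $x$ and every $v>x$. Hence $\sup_{\hub_i\le m\le\hub_{i+1}}\abs{T(\hub_i,m)-\mu(m-\hub_i)}\le(C_0+\mu)(L_i+B)$. So your $M_i$ is dominated by a constant times $L_i+B$. Then $\E^0(L^2)<\infty$ yields $\max_{i\le Cn}L_i=o(\sqrt n)$, which is exactly how the paper bounds $\sup_{t\le t_0}\abs{R^\pm_{\floor{nt}}}$.

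Two smaller omissions. First, for $\floor{nt}<\hub_1$ your decomposition does not apply, so you need a bound on $T(0,m)$ for $0\le m<\hub_1$. The paper uses $T(0,m)\le T(0,\hub_1)+T(m,\hub_1)\le 2C_0(\hub_1+B)$, though any almost surely finite bound independent of $n$ would do. Second, replacing $\mu\floor{nt}$ by $\mu nt$ costs at most $\mu/\sqrt n$ uniformly in $t$.
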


\begin{cor}[Hop and cut point counts]\label{cor:count}
	Under the assumptions of Theorem~\ref{thm:main},
		there are constants $\nu_H,\nu_{\cut}\in(0,1]$ and $\gs_H^2,\gs_{\cut}^2\in[0,\infty)$ such that
	\begin{align*}
		 \frac{\hop_n}{n}\To\nu_H,\quad \frac{\cut_n}{n}\To \nu_{\cut},\text{ a.s.},\quad \frac{\hop_n-\nu_Hn}{\sqrt n}\tod \N(0,\gs_H^2),\quad
		\frac{\cut_n-\nu_{\cut}n}{\sqrt n}\tod \N(0,\gs_{\cut}^2).
	\end{align*}
Let $h$ be the smallest number of edges of a geodesic between the endpoints of a typical Palm block, and let $c$ be the number of cut points in that block, with the left endpoint excluded and the right endpoint included. Then
\begin{align*}
 \nu_H&=\frac{\E^0h}{\E^0L},\ 
 \nu_{\cut}=\frac{\E^0c}{\E^0L},\ 
 \gs_H^2=\frac{\var^0(h-\nu_HL)}{\E^0L},\ 
 \gs_{\cut}^2=\frac{\var^0(c-\nu_{\cut}L)}{\E^0L}.
\end{align*}
\end{cor}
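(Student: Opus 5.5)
The plan is to treat $\hop_n$ and $\cut_n$ exactly as $T_n$ is treated in Theorem~\ref{thm:main}: write each as a renewal-reward sum over the i.i.d.\ blocks between hubs, plus boundary terms, and then apply the renewal-reward law of large numbers and CLT. Let $N(n):=\max\{i:\hub_i\le n\}$. Since every hub is a cut point (Lemma~\ref{lem:pivotal}), each geodesic from $0$ to $n$ passes through $\hub_1,\dots,\hub_{N(n)}$. Between consecutive hubs it therefore restricts to a geodesic of the block.

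For the hop count, the key claim is that the minimal number of edges splits additively across hubs:
\begin{align*}
\hop_n=h_{\mathrm{left}}+\sum_{i=1}^{N(n)-1}h_i+h_{\mathrm{right}}.
\end{align*}
Here $h_i$ is the minimal edge count of a geodesic from $\hub_i$ to $\hub_{i+1}$, and $h_{\mathrm{left}},h_{\mathrm{right}}$ are the corresponding counts for the segments $0\to\hub_1$ and $\hub_{N(n)}\to n$. This holds because geodesics through a cut point concatenate freely: minimizing the edge count over geodesics is done separately in each segment.

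For the cut points I need the analogous locality statement. Whether $v\in(\hub_i,\hub_{i+1}]$ is a cut point must be determined by the block $(\hub_i,\hub_{i+1}]$ alone. I would derive this from the hub construction in Section~\ref{sec:hubs}: all geodesics crossing the block stay inside it between the hubs. Then $v$ lies on all geodesics from far left to far right if and only if it lies on all geodesics from $\hub_i$ to $\hub_{i+1}$. This gives
\begin{align*}
\cut_n=\sum_{i=1}^{N(n)-1}c_i+O\big(L_0+L_{N(n)}\big).
\end{align*}
With these decompositions, the pairs $(L_i,h_i)$ and $(L_i,c_i)$ are i.i.d.\ for $i\ne0$. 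This uses the same independence that gives the i.i.d.\ property of $(L_i,G_i)$, because $h_i$ and $c_i$ are functions of the same block data. Moreover $h_i\le L_i$ and $c_i\le L_i$, since a geodesic between hubs is a simple path in its block.

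The remaining step is the renewal-reward argument. The strong law for $N(n)/n\to1/\E^0L$ together with the SLLN for $\sum h_i$ and $\sum c_i$ gives the almost sure limits. The boundary terms are bounded by $L_0+L_{N(n)}$, which is $o(\sqrt n)$ in probability, and in fact $o(n)$ almost surely because $L_i/i\to0$. For the CLT I apply Anscombe's theorem to the centered sums $\sum(h_i-\nu_HL_i)$, which gives the stated variance $\var^0(h-\nu_HL)/\E^0L$. This needs $\E^0L^2<\infty$, which the proof of Theorem~\ref{thm:main} already establishes in the regime \eqref{eq:alpha-condition}. The second moments of $h$ and $c$ then follow from the domination by $L$. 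The bounds $\nu_H,\nu_{\cut}\le1$ follow from $h,c\le L$. Positivity holds because $h\ge1$, and $c\ge1$ since the right hub of each block is counted.

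The main obstacle is the locality claim for cut points, because the definition allows an arbitrary random $k$ and refers to geodesics between arbitrary far-away $u,w$. I would prove it by showing first that any geodesic from $u<\hub_i$ to $w>\hub_{i+1}$ passes through both hubs. Its portion between them is then a geodesic of the block, and conversely every block geodesic extends to such a $u$--$w$ geodesic. This makes "cut point" equivalent to "on every block geodesic". Vertices that are not cut points of the block cannot become cut points through a larger $k$, and this equivalence is what makes $c_i$ block-measurable. Non-uniqueness of geodesics is the reason $\hop_n$ is defined via the minimum, and additivity of the minimum across hubs handles it.
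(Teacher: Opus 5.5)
Your proposal is correct and follows essentially the same route as the paper. The paper also uses additivity of the minimal hop count at hubs, shows cut points are block-local via confinement and additivity at the two bounding hubs, uses $h_i,c_i\le L_i$, and then applies the renewal-reward strong law and Anscombe's CLT with endpoint terms bounded by the lengths of the two containing blocks. In your necessity direction, note explicitly that the extended $u$--$w$ geodesic avoids $v$ because its outer pieces are confined to $(-\infty,\hub_i]$ and $[\hub_{i+1},\infty)$; the paper's argument relies on exactly this fact.
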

The constants $\mu,\gs^2,\nu_H,\nu_{\cut},\gs_H^2$ and $\gs_{\cut}^2$ are intrinsic. Their values are uniquely determined by the laws of the passage time. Their regenerative formulas may be computed using any hub construction satisfying the proved hypotheses.
Our second result concerns the intermediate window $1+2/\theta<\ga<1+3/\theta$, where hubs still have positive density but their spacing has infinite variance.
We need the lower tail of $\go$ to be regularly varying rather than merely logarithmically equivalent to a power.

\begin{ass}[Regular variation at zero]\label{ass:rv}
	$F(t)=t^{\theta}\,\slv(t)$ for $0<t\le1$, where $\slv$ is slowly varying at $0$,
		\ie~$\slv(st)/\slv(t)\to1$ as $t\downarrow0$ for every $s>0$.
\end{ass}

Define the \emph{stable index}
\begin{align}\label{eq:beta}
	\gb:=\theta(\ga-1)-1,\text{ so that } \gb\in(1,2)\iff 1+2/{\theta}<\ga<1+3/{\theta}.
\end{align}
For $x>0$, let
\begin{align}\label{eq:rho}
	\rhox(x):=\sum_{d\ge1}F\big((\mu d-x)_+\,d^{-\ga}\big)
\end{align}
be the expected number of edges with left endpoint at a given vertex whose passage time undercuts the linear cost $\mu d$ of their length $d$ by at least $x$.
Here $\mu$ is the time constant of~\eqref{eq:LLN-main},
	which exists throughout the linear-growth regime (Proposition~\ref{prop:lln}).
Under Assumption~\ref{ass:rv}, $\rhox$ is regularly varying at infinity with index $-\gb$ (Lemma~\ref{lem:rho}), and
\begin{align}\label{eq:rho-asymp}
	\rhox(x)\sim \mu^{\theta\ga-1}\,\mathrm{B}(\theta+1,\gb)\;x^{-\gb}\,\slv(x^{1-\ga}),\quad x\to\infty,
\end{align}
where $\mathrm{B}(\cdot,\cdot)$ is Euler's Beta function.
Let $(a_n)$ be any sequence with $$n\rhox(a_n)\to1.$$ Any two such sequences are asymptotically equivalent.
Then $a_n=n^{1/\gb}\slv^\ast(n)$ for a slowly varying $\slv^\ast$.
If $F(t)\sim c_Ft^{\theta}$ as $t\downarrow0$,
	then $a_n\sim\big(\mu^{\theta\ga-1}\mathrm{B}(\theta+1,\gb)\,c_F\,n\big)^{1/\gb}$.

\begin{thm}[Stable fluctuations]\label{thm:stable}
	Suppose Assumptions~\ref{ass:law} and~\ref{ass:rv} hold, and $1+2/\theta<\ga<1+3/\theta$.
	Then, with $\mu$ as in~\eqref{eq:LLN-main} and $a_n$ as above,
	\begin{align}\label{eq:stable-main}
		\frac{T_n-\mu n}{a_n}\tod S_{\gb}\big(\gs_\gb,-1,0\big),\quad
		\text{where }\ \gs_\gb^{\gb}:=\Gamma(1-\gb)\cos({\pi\gb}/2)\in(0,\infty).
	\end{align}
	Here $S_{\gb}(\gs_\gb,-1,0)$ denotes the $\gb$-stable law with characteristic function
	\begin{align}\label{eq:stable-cf}
		t\mapsto\exp\big\{-\gs_\gb^{\gb}\abs{t}^{\gb}\big(1+i\,\sgn(t)\tan(\pi\gb/2)\big)\big\},
	\end{align}
	that is, the stable law with index $\gb$, scale $\gs_\gb$,
		skewness parameter $-1$ and shift $0$ in the parametrization of~\cite{ST94}.
	Moreover $\E T_n-\mu n=o(a_n)$, so~\eqref{eq:stable-main} also holds with $\E T_n$ in place of $\mu n$.
\end{thm}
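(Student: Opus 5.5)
The plan is to reduce Theorem~\ref{thm:stable} to a stable limit for a renewal-reward process built from the i.i.d.\ Palm blocks $(L_i,G_i)$ of \eqref{eq:LG}, and then to identify the left tail of the centered reward $Y:=G-\mu L$ exactly. Write $N(n):=\max\{i:\hub_i\le n\}$. Since every geodesic from $0$ to $n$ passes through all hubs in $[0,n]$ (Lemma~\ref{lem:pivotal}), we have
\begin{align*}
T_n=\sum_{i=1}^{N(n)-1}G_i+R_n,
\end{align*}
where the boundary term $R_n$ collects the pieces $[0,\hub_1]$ and $[\hub_{N(n)},n]$. First I show $R_n/a_n\to0$ in probability. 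The delay block has finite law independent of $n$. For the last block I use that $\hub$ has positive density and that the overshoot block is a size-biased Palm block with $L$ having finite mean; then $\mu(n-\hub_{N(n)})$ and the passage time across it are $O_P(1)$ times a tight variable, which is $o(a_n)$. Because $\E^0L<\infty$ and $\mu=\E^0G/\E^0L$ by \eqref{eq:constants}, $\E^0 Y=0$. Writing $T_n-\mu n=\sum_{i<N(n)}Y_i+O_P(1)$-type terms, the standard randomization argument (Anscombe / renewal CLT with $N(n)/n\to1/\E^0L$ a.s.) reduces \eqref{eq:stable-main} to showing that $Y$ is in the domain of attraction of a totally left-skewed $\gb$-stable law with
\begin{align*}
\pr^0(Y<-x)\sim \E^0L\cdot\rhox(x),\qquad \pr^0(Y>x)=o(\rhox(x)).
\end{align*}
Given this, the sum of $n/\E^0L$ blocks normalized by $a_n$ (with $n\rhox(a_n)\to1$) converges to a stable law whose Lévy measure is $\gb x^{-\gb-1}dx$ on $(-\infty,0)$. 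The constant $\Gamma(1-\gb)\cos(\pi\gb/2)$ is then read off from the standard formula relating the tail constant to the scale in \cite{ST94}. The sign of the $\Gamma(1-\gb)$ factor is positive because $\cos(\pi\gb/2)<0$ and $\Gamma(1-\gb)<0$ for $\gb\in(1,2)$.

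The key step, and the main obstacle, is the exact left-tail asymptotic $\pr^0(Y<-x)\sim\E^0L\,\rhox(x)$. Here is the intended mechanism. A large negative $Y$ must come from a single fast long edge $\{u,u+d\}$ with $t_{u,u+d}\le \mu d-x$, which lets the geodesic save about $x$ relative to linear cost. The expected number of such edges with left endpoint in a block is $\E^0L\cdot\rhox(x)$ by stationarity and \eqref{eq:rho}. For the lower bound on the tail, I condition on one such edge and use the law of large numbers (Proposition~\ref{prop:lln}) with concentration of $T$ along the remaining stretch. This shows the block containing the edge then has $Y\le -x(1-\eps)$ with high probability. Such an edge has $d\gtrsim x/\mu$, which forces the block to be long, but the savings are still exactly $\mu d-t_e$ up to $o(x)$ errors.

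For the matching upper bound I would rule out competing scenarios:
\begin{itemize}
\item two or more moderately fast edges jointly producing savings $x$, which costs a product of tails and so is $o(\rhox(x))$ since $\gb>1$;
\item many small savings accumulating, controlled by a truncated moment or Fuk--Nagaev bound on blocks with no edge saving more than $\eps x$, using $\pr^0(L>\ell)$ regularly varying of index $-\gb$ and the Gaussian-regime variance bounds;
\item the right tail, since $G\le$ the cost of a fixed family of local routes, whose upper fluctuations are light by the spread-out comparison metric.
\end{itemize}
I would first try the ``one big jump'' decomposition. This means removing from the block the single edge of largest saving, then comparing $G$ with and without it via a monotone coupling (the within-block approximation mentioned in the abstract), and showing the remainder has tail $o(\rhox(x))$. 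The regular variation of $\rhox$ (Lemma~\ref{lem:rho}, \eqref{eq:rho-asymp}) allows replacing $(1\pm\eps)x$ by $x$.

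Finally, for $\E T_n-\mu n=o(a_n)$, I use the decomposition above. The quantity $\E\sum_{i<N(n)}Y_i$ is handled by Wald's identity with the stopping time $N(n)$, giving zero up to boundary terms. The boundary terms have expectation $O(\E^0 L^2\wedge\cdot)$, which is not finite here. So I bound the expected overshoot contribution by $\E^0[L\,\1\{L\le n\}]$-type truncations, which by the regular variation of index $-\gb$ are $O(n^{2-\gb})=o(n^{1/\gb})$ since $(2-\gb)\gb<1$ for $\gb\in(1,2)$, with slowly varying factors absorbed.
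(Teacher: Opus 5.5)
\textbf{Where the proposal has a gap.} Your outline matches the paper's: renewal--reward decomposition at hubs, Anscombe's theorem with $b_{m_n}\sim a_n$, a single fast edge counted through stationarity/the cycle formula, and an $O(n^{2-\gb})$ bound for the centering. The gap is in the tail-equivalence step, which carries the theorem, and the first failure is the right tail. The bound $G\le$ (cost of local routes) gives only $Y\le(C_0-\mu)L$ for your reward $Y=G-\mu L$. Even heuristically it gives $Y\approx(C_0/2-\mu)L$, since the envelope routes cost about $C_0/2$ per unit length while $\mu\le C_0/2$, typically strictly (Lemma~\ref{lem:basic}). By Lemma~\ref{lem:L-tail}, $\pr^0(L>cx)\asymp x^{-\gb}\slv(x^{1-\ga})\asymp\rhox(x)$. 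So your argument yields only $\pr^0(Y>x)=O(\rhox(x))$, and a right tail comparable to the left one would move the skewness parameter away from $-1$. What is needed is that inside a long block the passage time is $\mu L+o(x)$, not merely $O(L)$. The paper gets this from the truncated metric $T_+^y$ with $y=\eta x$. Its time constant satisfies $\mu_+^y-\mu\le C\sum_{d\ge y}(dp_d+d^{-7})$, so $G-\mu L\le S^y_{+,N}+o(x)$ on $\{x/C_0<L\le Kx\}$, where $S^y_{+,N}$ is a sum of centered $y$-block rewards. One must then show these partial sums do not deviate by order $x$.

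That last step needs an independence structure that the proposal does not supply, and so do your ``many small savings'' case and the ``concentration along the remaining stretch'' in your lower bound. A block of length of order $x$ has probability $\asymp\rhox(x)$, typically caused by a long forbidden edge that saves nothing. You must therefore show that, \emph{conditionally} on this rare event, the within-block background deviates by $\eps x$ with probability $o(1)$. An unconditional LLN or Fuk--Nagaev bound does not transfer to a conditioning event of vanishing probability, and the block reward is not a sum of independent terms. The Gaussian-regime variance bounds are also unavailable, because $\E^0L^2=\infty$ here.

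The paper's device is the $y$-hubs $\Hloc\cap\cU_{<y}$, which are $\mathcal F_{<y}$-measurable. The $y$-blocks with both comparison rewards $Z^y_\pm$ are then i.i.d.\ and centered, with block-length tail bounds \emph{uniform in} $y$; this uniformity is essential because $y=\eta x$ grows. The first-big-edge inequality~\eqref{eq:first-big} factorizes the bad event as $\sum_{u_1}\pr(0\text{ is a }y\text{-hub},\,L^{<y}\ge u_1,\,E_x)\cdot\sum_{d\ge y}p_d$. The first factor is $o(1)$ by the truncated Kolmogorov bounds of Lemma~\ref{lem:no-big-dev} and the split~\eqref{eq:A-split}.

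For the lower bound the paper avoids concentration altogether. Off an event of probability $o(\rhox(x))+CJ^{-\gb}\rhox(x)$, the $(1+\eps)x$-saving edge lies within $\eps x/(4C_0)-B$ of both block ends. Deterministic linear control at the two hubs then finishes the argument.

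Two smaller points. First, $\pr^0(L>u)$ is only shown to be of order $u^{-\gb}\slv(u^{1-\ga})$, not regularly varying; upper bounds suffice for what you need. Second, your centering argument omits the event $\{\hub_1>n\}$, where the decomposition fails. There the paper uses H\"older's inequality with the envelope sum $W_n$ and the choice~\eqref{eq:p-stable} of $p$.
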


The limit is totally skewed to the left.
It is the law at time $1$ of the spectrally negative L\'evy process with L\'evy measure $\gb\abs{y}^{-\gb-1}\1_{\{y<0\}}\,dy$ and no Gaussian part.
The sign of the skewness has a transparent meaning.
Fluctuations of $T_n$ below its mean are produced by single anomalously fast long edges,
	each of which saves a time of the order of its length.
Deviations above the mean require the environment to be slow along a whole stretch and are much less likely.
In the language of the renewal structure,
	the reward $G-\mu L$ of a hub block has a regularly varying left tail with index $\gb$ and a lighter right tail.
Establishing this \emph{tail equivalence} for the reward,
	rather than merely a tail exponent for $L$, is the main technical step of the proof (Section~\ref{sec:tails}).
	
\begin{thm}[Functional stable limit]\label{thm:stable-functional}
	Under the assumptions of Theorem~\ref{thm:stable},
	\begin{align}\label{eq:stable-functional}
		\big((T(0,\floor{nt})-\mu nt)/{a_n}\big)_{t\ge0}\tod \big(X_{\gb}(t)\big)_{t\ge0}
	\end{align}
	in $D[0,\infty)$ equipped with the Skorokhod $M_1$ topology,
		where $X_\gb$ is the spectrally negative $\gb$-stable L\'evy process with no Gaussian part,
		L\'evy measure $\gb\abs{y}^{-\gb-1}\1_{\{y<0\}}\,dy$ and $\E X_\gb(1)=0$, so that $X_\gb(1)\equald S_\gb(\gs_\gb,-1,0)$.
\end{thm}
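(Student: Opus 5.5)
We prove Theorem~\ref{thm:stable-functional} by treating the path as a renewal-reward process built from the i.i.d.\ hub blocks $(L_i,G_i)_{i\ge1}$. First we prove the functional limit for the sums over blocks. Then we replace the block index by the spatial coordinate $\floor{nt}$ through a random time change. Finally we control the partial block that straddles $\floor{nt}$.

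\emph{Functional limit for block sums.} Set $R_i:=G_i-\mu L_i$, so that $\E^0R=0$ by~\eqref{eq:constants}, and let $\bar L:=\E^0L$. The key input is the tail equivalence from Section~\ref{sec:tails}:
\[
\pr^0(R<-x)\sim\bar L\,\rhox(x)\quad\text{as } x\to\infty,
\]
with $\pr^0(R>x)=o(\rhox(x))$. Since $\rhox$ is regularly varying with index $-\gb$, $\gb\in(1,2)$, the classical stable functional limit theorem for i.i.d.\ arrays (Skorokhod; see also Whitt) applies. It gives
\[
\Big(a_n^{-1}\sum_{i=1}^{\floor{nt/\bar L}}R_i\Big)_{t\ge0}\tod (X_\gb(t))_{t\ge0}
\]
in $J_1$, hence also in $M_1$. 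The normalization works out because $(n/\bar L)\cdot\bar L\rhox(a_nx)\to x^{-\gb}$, which yields the L\'evy measure $\gb\abs{y}^{-\gb-1}\1_{\{y<0\}}\,dy$. Centering is exact since $\E^0R=0$ and $\gb>1$. The initial block $i=0$ and the offset $\hub_0$ are a.s.\ finite, so they are negligible after division by $a_n$.

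\emph{Time change.} Let $N(m):=\max\{k:\hub_k\le m\}$. Since $\E^0L<\infty$, the renewal LLN gives $\sup_{t\le K}\abs{N(nt)/n-t/\bar L}\to0$ a.s. The composition map is continuous in $M_1$ at continuous, strictly increasing time changes, and $X_\gb$ is a.s.\ continuous at fixed times. Hence
\[
\Big(a_n^{-1}\big(T(0,\hub_{N(nt)})-\mu\hub_{N(nt)}\big)\Big)_{t\ge0}\tod X_\gb .
\]
Here we use that hubs are cut points (Lemma~\ref{lem:pivotal}), so $T(0,\hub_k)=T(0,\hub_1)+\sum_{i=1}^{k-1}G_i$.

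\emph{Within-block remainder.} This is the step we expect to be the main obstacle, and it is the reason the convergence holds in $M_1$ rather than $J_1$. Between $\hub_{N(nt)}$ and $\floor{nt}$ the quantity $T(0,\floor{nt})-\mu nt$ does not jump all at once. Inside a long block, which is created by one fast long edge, it drifts linearly. The ``monotone within-block approximation'' from the abstract handles this. For $m\in[\hub_k,\hub_{k+1}]$ we show that $T(0,m)-\mu m$ lies, up to an error that is $o(a_n)$ uniformly over $k\le Kn$, between the two endpoint values and is approximately monotone across the block. Concretely, for $m$ in the block we bound $T(\hub_k,m)$ above by $\min\{\mu(m-\hub_k),\,G_k+\mu(\hub_{k+1}-m)\}+\text{small}$ and below by its geodesic decomposition through the unique long edge.

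For all blocks except those with $L_k\ge\eps a_n$, the errors are at most $\max_{k\le Kn}\abs{R_k}+\mu L_k$ restricted to short blocks. Tightness together with a truncation bound, namely $\pr(\max_{k\le Kn}L_k>\eps a_n)$ controlled by the number of large jumps, makes these errors negligible. Finitely many big blocks remain. On each of them the path of $T(0,\cdot)-\mu\,\cdot$ is close in $M_1$ to a monotone interpolation between its endpoint values, since the block has length $o(n)$ on the time scale. An interpolation of this kind has $M_1$ distance that vanishes from the pure-jump version. Combining this with the previous step via a standard $M_1$ approximation lemma (Billingsley, Theorem~3.2) gives~\eqref{eq:stable-functional}.
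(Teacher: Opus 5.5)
Your outline has the same architecture as the paper's proof: a $J_1$ stable functional limit for the i.i.d.\ block rewards, composition with the renewal clock, trivial control of short blocks, and an $M_1$ comparison between a monotone within-block profile and the piecewise-constant process sampled at hubs. The gap is that the step you flag as the main obstacle is asserted rather than proved, and it is where essentially all of the work lies.

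You assume that a long block consists of ``the unique long edge'' plus a background that is linear up to $o(a_n)$, uniformly over $k\le Kn$, but nothing in the proposal shows this. About $Kn$ blocks are involved and only $O(1)$ of them are long, so you need a \emph{relative} estimate
\[
\pr^0\big(\eps a_n<L\le J\eps a_n,\ \text{block not good}\big)=o\big(\pr^0(L>\eps a_n)\big).
\]
Here ``good'' means two things. First, exactly one edge of $\cD_{\ge\eta u}$, $u=\eps a_n$, lies in the block. Second, the background metric is within $O(\eps u)$ of $\mu$ times the distance, simultaneously for all pairs of points of the block. This does not follow from tightness or from counting large jumps, for three reasons. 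Conditioning on $L>u$ distorts the environment. Medium forbidden edges, of length below $\eta u$, can accumulate savings. The background block rewards are themselves heavy-tailed with index $\gb<2$.

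The paper obtains this estimate in Lemma~\ref{lem:good-block}. It combines three inputs:
\begin{enumerate}
\item the bound~\eqref{eq:L-tail-nobig} for long blocks containing no edge of $\cD_{\ge\eta u}$;
\item an $O(Ju\,s_y^2)$ bound, with $s_y=\sum_{d\ge y}p_d$, for blocks containing two such edges;
\item the first-big-edge decomposition~\eqref{eq:first-big}.
\end{enumerate}
The decomposition decouples the $y$-hub prefix, which is an $\mathcal F_{<y}$-event, from the long-edge field. The maximal estimate of Lemma~\ref{lem:no-big-dev}, with constants uniform in $y=\eta u$, then costs only $o(1)\sum_{d\ge y}p_d$.

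Lemma~\ref{lem:shape} transfers this control from the $\mathcal F_{<y}$-measurable comparison rewards to the metric with $\cD_{\ge y}$ deleted, which is not $\mathcal F_{<y}$-measurable. The transfer uses the sandwich~\eqref{eq:deleted-sandwich} together with $Ju\,|\mu^y_\pm-\mu|=o(u)$. The lemma then takes the exact three-term minimum through $e_1$ to obtain the monotone profile $g$.

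Your upper bound $\min\{\mu(m-\hub_k),\,G_k+\mu(\hub_{k+1}-m)\}$ squeezes the profile against the lower bound only because of a fact you do not state. On a good block $[0,L]$ no $y$-hub lies in $(v_1,L)$, so $L-v_1$ is below one background gap. Without this, the two bounds can differ by up to $2\mu(L-v_1)$ and do not force monotonicity.

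Finally, you must show that the block straddling $\floor{nt_0}$ is short, for example because its size-biased tail tends to zero. Otherwise the terminal values do not match in $M_1$ on $[0,t_0]$.
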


\begin{rem}[The boundary]\label{rem:boundary}
	The Gaussian theorem and the stable theorem cover the open parameter ranges on either side of $\ga=1+3/\theta$ within $\ga>1+2/\theta$. At $\ga=1+3/\theta$, one expects Gaussian behavior, possibly with a non-standard normalization depending on the slowly varying function $\slv$.
\end{rem}

\subsection{Heuristics: fast long edges and the two thresholds}\label{ssec:heuristics}

The mechanism behind Theorems~\ref{thm:main} and~\ref{thm:stable} can be summarized in a few lines.
Suppose that the passage time grows linearly, $T(x,y)\approx\mu\abs{x-y}$.
An edge $e=\{x,y\}$ of length $d=\abs{x-y}$ is useful to a geodesic only if $t_e=d^{\ga}\go_e$ is at most of order $\mu d$, \ie\ $\go_e\lesssim d^{1-\ga}$.
By~\eqref{eq:lower-tail} this event has probability $F(d^{1-\ga})=d^{-\theta(\ga-1)+o(1)}$.
Such an edge saves a time $\mu d-t_e$ of order $d$,
	and there are about $d$ edges of length $d$ crossing any given point of the line.
Hence
\begin{align}\label{eq:heur-density}
	\sum_{d\ge1}d\cdot d^{-\theta(\ga-1)}<\infty\iff \ga>1+2/\theta
\end{align}
is the condition under which a given point has positive probability of being crossed by no useful long edges.

Points that are crossed by no useful edge, and at which a linear-cost local route exists on both sides, are \emph{hubs}.
Every geodesic passing from their left to their right must go through them (Lemma~\ref{lem:pivotal}).
Between two consecutive hubs the passage time is a function of the edges inside the block only.
The cut factorization and localization proved in Proposition~\ref{prop:regeneration} make the marked blocks i.i.d.; $T_n$ is then, up to boundary terms, a sum of a renewal number of i.i.d.~rewards.
This gives the law of large numbers in the whole linear-growth regime and reduces the fluctuation question to the tails of the block length $L$ and of the block reward $G-\mu L$.

The block containing a useful edge of length $d$ has length at least $d$.
Heuristically, the expected number of useful edges with a fixed left endpoint and length exceeding $x$ is
$\sum_{d>x}d^{-\theta(\ga-1)}=x^{-\gb+o(1)}$, where $\gb=\theta(\ga-1)-1$.
The additional factor $d$ in the count of edges crossing a fixed point instead gives the stationary recurrence exponent $1-\gb$.
These are different distributions: the block seen from a typical vertex is size-biased, whereas $L$ has the Palm law of a typical block.
Theorem~\ref{thm:momhub} in Appendix~\ref{app:nn} establishes the corresponding moment threshold for the nearest-neighbor construction under its stated upper-tail assumptions, away from critical equality.
The series
\begin{align}\label{eq:heur-variance}
 \sum_{d\ge1}d^2 d^{-\theta(\ga-1)}
\end{align}
converges for $\ga>1+3/\theta$ and diverges for $\ga<1+3/\theta$.
This predicts the finite second-moment threshold. 
Above it, the classical renewal-reward central limit theorem applies.
Below it, $\gb\in(1,2)$, and the sum of $n/\E L$ i.i.d.~rewards whose left tail is regularly varying with index $\gb$ lies in the domain of attraction of a totally left-skewed $\gb$-stable law, scaled by $n^{1/\gb}$.
The scale $a_n$ in Theorem~\ref{thm:stable} is the level at which the expected number of edges in $[0,n]$ saving more than $a_n$ equals one, $n\rhox(a_n)=1$.
The Beta-function constant in~\eqref{eq:rho-asymp} comes from integrating $F((\mu d-x)d^{-\ga})$ over the lengths $d>x/\mu$. In Proposition~\ref{prop:zero-speed}, we present a self-contained proof of zero-speed below $\alpha<1+2/\theta$, which also follows from the results in~\cite{CDK26}.

\subsection{Main idea and contribution}\label{ssec:method}
The proof has three layers, developed in Sections~\ref{sec:spread}--\ref{sec:clt}.
\begin{enumeratei}
	\item \emph{A finite-range comparison metric with good upper-tail integrability.} A nearest-neighbor path from $0$ to $n$ has passage time $\sum_{i<n}\go_{i,i+1}$,
			which is useless when $\E\go=\infty$.
		We replace it by FPP on the $\ell$-spread-out line, which uses all edges of length at most $\ell$.
		A unit cut of that graph contains $\ell$ groups of edge-disjoint routes,
			and the minimum over them has a finite moment of order $p$ as soon as $\zeta\ell>p$ (Lemma~\ref{lem:amplify}).
		The explicit family of routes is taken from Dey and Kim~\cite{DK26},
			where this amplification phenomenon and its sharp cut exponent are the central theme.
		Here it is used as an auxiliary device inside an infinite-range model.
		The range $\ell$ depends on $\zeta$ but stays fixed as $n\to\infty$.
	\item \emph{Hubs.} A hub is a cell boundary at which the spread-out metric is linearly controlled on both sides,
			a finite local ``portal'' event forces all short crossings through the boundary,
			and no anomalously fast long edge crosses the boundary (Definition~\ref{def:hub}).
		The long-edge part of the hub event has positive probability precisely because $\ga>1+2/\theta$.
	\item \emph{Regeneration.} An exploration of the fast long edges,
			combined with the renovating events supplied by the finite-range metric,
			produces i.i.d.~hub blocks (Proposition~\ref{prop:regeneration}).
		The block length has a finite second moment when $\ga>1+3/\theta$,
			and an upper tail bounded on the regularly varying scale of index $-\gb$ when $1+2/\theta<\ga<1+3/\theta$.
		The Gaussian theorem then follows from the renewal-reward CLT and Anscombe's theorem~\cite{Ans52,Gut09,GW93}.
		The stable theorem follows from the single-big-jump tail asymptotics of Section~\ref{sec:tails} together with the stable limit theorem for randomly indexed sums.
\end{enumeratei}

Regeneration decompositions along pivotal vertices have a long history in one-dimensional percolation and interacting particle systems,
	going back at least to the limit theorems of Cox and Durrett~\cite{CD81}.
Regenerative methods for essentially one-dimensional FPP were used by Ahlberg~\cite{Ahlberg15},
	and skeleton constructions for long-range last-passage percolation appear in \cite{FMS12}.
Events that depend on both the past and the future of the environment, like our hub events,
	are naturally handled by extended renovation theory~\cite{FK03}.
We state and prove the specialized regeneration argument needed here, so no general renovation theorem is required.
The use of $M_1$ to capture monotone clusters of stable fluctuations has classical precedents in Avram and Taqqu~\cite{AT92} and Basrak, Krizmani\'c and Segers~\cite{BKS12}.
Heavy-tailed renewal-reward mechanisms also occur in the network-traffic models of Mikosch, Resnick, Rootz\'en and Stegeman~\cite{MRRS02}.
The single-big-jump principle is developed in Denisov, Dieker and Shneer~\cite{DDS08} and Foss, Korshunov and Zachary~\cite{FKZ13}. Our problem is to establish the precise block-reward version for this metric.
Hambly and Martin~\cite{HM07} obtain big-weight-driven limits in two-dimensional last-passage percolation, a related but different mechanism.
The main contributions of this paper are
\begin{enumeratei}
	\item the explicit two-sided pivotal event whose probability is controlled by elementary large deviation and fluctuation estimates for the edge-weight law,
			rather than by a percolation-theoretic renormalization,
	\item the use of a spread-out comparison metric to allow an arbitrarily small positive upper-tail moment, and
	\item the exact tail asymptotic for the block reward that yields the stable limit with an explicit scale.
\end{enumeratei}

\subsection{Literature review}
First-passage percolation (FPP) is a model for random paths through a disordered medium.
Each edge of a graph carries a random traversal time, and one studies the induced random metric.
Introduced by Hammersley and Welsh~\cite{HW65} as a model for the spread of a fluid or an infection through a random medium,
	it has become a central object in probability and statistical physics, see~\cite{K86,SW78,GK12,ADH17} for background.
Subadditivity, through the ergodic theorems of Kingman and Liggett~\cite{L85},
	gives a deterministic linear growth rate (the \emph{time constant}) and a shape theorem in great generality.
Fluctuations are far more delicate.
On $\dZ^2$ the passage time is expected to display Kardar--Parisi--Zhang scaling~\cite{KS91}.
For ordinary noncritical nearest-neighbor FPP on $\dZ^2$, a general fluctuation limit remains open. Gaussian fluctuations of order $\sqrt n$ are known in thin-cylinder and essentially one-dimensional settings~\cite{CD09,Ahlberg15}.
In an essentially one-dimensional model one expects instead that \emph{regeneration},
	rather than KPZ geometry, governs the fluctuations~\cite{Ahlberg15}.
The difficulty in the model studied here is that the underlying graph is complete.
Every vertex has infinitely many neighbors,
	and the edge law is allowed to be so heavy-tailed that it need not have a first moment.

Long-range percolation on $\dZ^d$, in which an edge between $x$ and $y$ is retained independently with probability decaying polynomially in $\abs{x-y}$,
	has been studied since the 1980s as a tractable interpolation between nearest-neighbor percolation and mean-field percolation on the complete graph.
On $\dZ$ the one-dimensional case already exhibits rich critical behavior.
The regimes $s<2$, $s=2$ and $s>2$ differ qualitatively for connection probabilities decaying as $\abs{x-y}^{-s}$~\cite{NS86,S99}; the existence of an infinite cluster also depends on the short-edge probabilities and on the coefficient of the tail,
	and the boundary case $s=2$ displays a further,
	discontinuous transition governed by the constant in front of the polynomial~\cite{AN86,IN88}.
Geometric quantities of the resulting random graph, such as the diameter, the chemical distance,
	the mixing time, and heat-kernel and resistance estimates,
	have been analyzed in~\cite{BB01,B02,B04,B04a,CGS02,BBY08,misumi2008estimates,CS09,T10,biskup2011graph,DS13}.
\emph{Scale-free percolation}, in which each vertex carries an i.i.d.~weight and the presence of an edge depends on both the product of the weights and the distance~\cite{DHH13},
	interpolates between long-range percolation and inhomogeneous random graphs and is a canonical spatial model for scale-free networks.

The long-range \emph{first-passage} analogue assigns a passage time growing with distance to every pair of vertices. Earlier epidemic formulations appear in~\cite{M72}. The lattice growth regimes relevant here were analyzed by the authors in~\cite{CD16}.
On $\dZ^d$, with edge times $\abs{x-y}^{\ga}$ times i.i.d.~exponential variables,
	four growth regimes for the ball of radius $t$ appear as $\ga$ increases:
	instantaneous percolation ($\ga<d$), stretched-exponential growth ($d<\ga<2d$),
	superlinear polynomial growth ($2d<\ga<2d+1$), and linear growth ($\ga>2d+1$).
The last regime recovers the linear growth behavior of nearest-neighbor FPP.

The present article extends the one-dimensional linear-growth regime of~\cite{CD16} from exponential edge weights to a general,
	possibly heavy-tailed, law for $\go$.
Writing $\theta$ for the exponent governing the lower tail of $\go$ near $0$ (so that $\theta=1$ covers the exponential case),
	the linear-growth threshold becomes $\ga>1+2/\theta$.
At $\theta=1$, it reduces to $\ga>3=2d+1$ as in~\cite{CD16}.
The finer Gaussian-versus-stable dichotomy that we identify at $\ga=1+3/\theta$ reduces at $\theta=1$ to $\ga=4$.

A finite complete-graph version of long-range FPP,
	extending the mean-field results of Janson~\cite{J99},
	has recently been analyzed by van der Hofstad and Lodewijks~\cite{HL24}.
Together with~\cite{CD16} this is a close antecedent of the present work.
The paper~\cite{CD16} studies growth regimes on the infinite lattice, whereas~\cite{HL24} studies the finite torus in the instantaneous regime.
	Neither establishes the fluctuation limits claimed here for the linear-growth regime.
The microscopic regime of FPP on the spread-out line is analyzed by Dey and Kim~\cite{DK26}. That paper proves Gaussian and stable limits using pivotal blocks. However, its stable fluctuations arise from large upper-tail edge times and are totally right-skewed. The stable fluctuations here arise from small lower-tail weights on long edges and are totally left-skewed. Our comparison metric uses their route idea, while our infinite-range cut constraints and reward-tail calculation require separate arguments.

Degree-dependent FPP on spatial random graphs is studied by Komj\'athy, Lapinskas, Lengler and Schaller~\cite{KLLS24,KLLS26}, where transmission penalties depend on endpoint degrees and spatial distance enters the connection probabilities.
Related recent work includes the spread-out cycle model of Dey and Kim~\cite{DKcycles26} and the random vertex environment of Chatterjee, Dey and Kim~\cite{CDK26}.
These results concern different geometries or growth regimes and do not substitute for the fluctuation and within-block estimates proved here.
Long-range \emph{last}-passage percolation on the line,
	whose skeleton structure is close in spirit to the hubs used here, was studied by Foss, Martin and Schmidt~\cite{FMS12}.

For broader context, first-passage percolation on random graphs with mean-field geometry (the complete graph,
	the configuration model, inhomogeneous random graphs) has an extensive literature,
	see~\cite{J99,HHM01,B08,BHH10} and the references therein.
Competition versions of first-passage percolation have been studied on the lattice in~\cite{GM05,H05,B10},
	and a continuum analogue of FPP with linear growth is analyzed in~\cite{GM08}.
Long-range dispersal kernels of the type considered here arise naturally in spatial epidemic and population-genetics models,
	where the passage time is the time for an infection or a favorable allele to spread over a given distance,
	see~\cite{M72,FM04,CMM06,MMC11,SRC11} for a sample of that parallel literature,~\cite{MN00,CNSW00,SCB02,DJ07} for epidemics and percolation on small-world and scale-free networks,
	and~\cite{GB86,JOWH98,HH98} for the physics of L\'evy-flight spreading.
Gossip processes on the complete graph and on small worlds~\cite{A10,CD11,BR12} provide yet another mean-field point of comparison.

\subsection{Notation}\label{ssec:notation}
We use $\dZ,\dR,\dR_+$ to denote the integers, the reals and the non-negative reals.
$F$ is the distribution function of $\go$, $\bar F=1-F$,
	and $\theta,\zeta,\ga$ are as in Assumption~\ref{ass:law} and~\eqref{eq:edge-weight}.
$T(x,y)$, $T_I(x,y)$ and $T_n$ are as in~\eqref{eq:T}.
The spread-out range $\ell$, the cell size $B$, the cell envelopes $Y_m$,
	the linear-control constants $a$ and $C_0$,
	the portal constant $b$ and the forbidden-edge probabilities $p_d$ are introduced in Section~\ref{sec:spread}.
$(\hub_i)_{i\in\dZ}$ is the two-sided stationary sequence of hubs with $\hub_0\le0<\hub_1$,
	$(L_i,G_i)$ are the block lengths and rewards~\eqref{eq:LG}, $(L,G)$ their common law for $i\ne0$,
	$Z_i:=G_i-\mu L_i$, and $\Nn=\max\{i:\hub_i\le n\}$.
$\gb=\theta(\ga-1)-1$ is the stable index, $\rhox$ the tail function~\eqref{eq:rho}, and $a_n$ the stable scaling.
We write $\E(X;A):=\E(X\1_A)$,
	$X\equald Y$ for equality in law, $\tod$ for convergence in distribution,
	$\toP$ for convergence in probability,
	$\N(0,\gs^2)$ for the centered Gaussian law with variance $\gs^2$,
	$f\sim g$ for $f/g\to1$, and $f\asymp g$ if $f/g$ is bounded above and below by positive constants. $C,C',c,\ldots$ denote positive finite constants whose value may change from line to line.

\subsection{Organization}\label{ssec:organization}
The paper is organized as follows. 
Section~\ref{sec:spread} constructs the spread-out comparison metric,
	proves the moment amplification lemma, and introduces the local linear-control, portal and forbidden-edge events.
Section~\ref{sec:hubs} defines hubs, proves the pivotal property,
	and establishes the regeneration structure together with the moment bounds on the block length.
Section~\ref{sec:hubs} also proves the law of large numbers in the whole linear-growth regime; Section~\ref{sec:clt} proves Theorem~\ref{thm:main} with its corollaries.
Section~\ref{sec:tails} proves the tail asymptotics of $\rhox$ and the tail equivalence for the block reward,
	and Section~\ref{sec:stable} completes the proof of Theorem~\ref{thm:stable} and proves its functional version,
	Theorem~\ref{thm:stable-functional}.
Section~\ref{sec:sim} contains results from some numerical experiments.
Appendix~\ref{app:nn} presents the nearest-neighbor hub construction, valid when $\E\go<\infty$,
	together with sharp moment thresholds for its hub gaps under the additional upper-tail moment assumptions in Theorem~\ref{thm:momhub}, away from critical equality.

\subsection*{Declaration on the use of AI}
AI tools were used as an assistive tool for grammar and language editing, consistency checks of notation and the identification of potentially relevant references.
All mathematical statements, proofs, references, and final verification were written and reviewed by the authors, who take full responsibility for the content of the manuscript.

\subsection*{Acknowledgements} The authors thank Arijit Chakrabarty for providing references for the stable law convergence and Daecheol Kim for many helpful discussions.

\section{Spread-out local control}\label{sec:spread}

Throughout this section we only assume Assumption~\ref{ass:law} and the linear-growth condition $\ga>1+2/\theta$.
The stronger condition~\eqref{eq:alpha-condition} enters only in Lemma~\ref{lem:summability}.(ii).

\subsection{Moment amplification by edge-disjoint routes}\label{ssec:amplify}
We choose the moment parameter $p$ before choosing the auxiliary range $\ell$.
In the Gaussian regime, fix $\eps_F\in(0,\theta)$ with
$e_F:=(\theta-\eps_F)(\ga-1)>3$, put $\delta_F:=e_F-3$, and set
\[
 \kappa_G:=\frac{1+\max\{3/4,\,2/(2+\delta_F)\}}2,\quad
 \delta_G:=\frac12\min\{3\kappa_G-2,\,\kappa_G(2+\delta_F)-2\}>0.
\]
Take $p>\max\{10,2+2/\delta_G\}$.
In the stable regime take $p$ as in~\eqref{eq:p-stable}.
For the law of large numbers alone any $p\ge10$ suffices.
These choices depend only on $\ga,\theta$ and remove any dependence of the moment choice
on a decay exponent established later.
Since $\zeta>0$ we may choose an integer $\ell\ge2$ so large that
\begin{align}\label{eq:ell-choice}
	\zeta\ell>p.
\end{align}
Let $\dZ^{(\ell)}$ be the graph on $\dZ$ in which $x$ and $y$ are adjacent when $1\le\abs{x-y}\le\ell$,
	with the same edge times~\eqref{eq:edge-weight}.
Its passage metric restricted to paths in an interval $I$ is denoted by $\Dl^I$, and the unrestricted one by $\Dl$.
Clearly $T\le\Dl\le\Dl^I$.

Let $K\ge1$ be an integer, to be fixed in Lemma~\ref{lem:summability}, and define $B:=4\ell K$.
For $m\in\dZ$ let $I_m:=[mB,(m+1)B]\cap\dZ$ be the $m$-th \emph{cell}.
Define the cell envelope
\begin{align}\label{eq:Y-cell}
	Y_m:=\max_{u,v\in I_m}\Dl^{I_m}(u,v).
\end{align}
The variables $(Y_m)_{m\in\dZ}$ are i.i.d., because adjacent cells share a vertex but no edge.

\begin{figure}[htbp]
	\centering
	\begin{tikzpicture}[x=0.65cm,y=0.5cm,>=Stealth]
		\draw[thick,black!50] (0,0) -- (15,0);
		\foreach \x in {0,...,15} {\fill (\x,0) circle (1.6pt);}
		\node[below] at (2,-0.1) {$u$};
		\node[below] at (13,-0.1) {$v$};
		\draw[red!80!black,thick] (2,0) .. controls (3,1.2) and (4,1.2) .. (5,0);
		\draw[red!80!black,thick] (5,0) .. controls (6,1.2) and (7,1.2) .. (8,0);
		\draw[red!80!black,thick] (8,0) .. controls (9,1.2) and (10,1.2) .. (11,0);
		\draw[red!80!black,thick] (11,0) .. controls (11.6,0.8) and (12.4,0.8) .. (13,0);
		\draw[blue!70!black,thick] (2,0) .. controls (2.3,-0.5) and (2.7,-0.5) .. (3,0);
		\draw[blue!70!black,thick] (3,0) .. controls (4,-1.3) and (5,-1.3) .. (6,0);
		\draw[blue!70!black,thick] (6,0) .. controls (7,-1.3) and (8,-1.3) .. (9,0);
		\draw[blue!70!black,thick] (9,0) .. controls (10,-1.3) and (11,-1.3) .. (12,0);
		\draw[blue!70!black,thick] (12,0) .. controls (12.3,-0.5) and (12.7,-0.5) .. (13,0);
		\draw[ForestGreen,thick] (2,0) .. controls (2.6,0.7) and (3.4,0.7) .. (4,0);
		\draw[ForestGreen,thick] (4,0) .. controls (5,1.9) and (6,1.9) .. (7,0);
		\draw[ForestGreen,thick] (7,0) .. controls (8,1.9) and (9,1.9) .. (10,0);
		\draw[ForestGreen,thick] (10,0) .. controls (11,1.9) and (12,1.9) .. (13,0);
	\end{tikzpicture}
	\caption{The $\ell$ edge-disjoint lane routes from $u$ to $v$ used in Lemma~\ref{lem:amplify} for $\ell=3$: lane $u\!+\!3\dZ$ (red), lane $u\!+\!1\!+\!3\dZ$ (blue), lane $u\!+\!2\!+\!3\dZ$ (green).}\label{fig:lanes}
\end{figure}

\begin{lem}[Moment amplification]\label{lem:amplify}
	\begin{enumeratei}
		\item For all $u<v$ in $I_0$ there are $\ell$ edge-disjoint paths from $u$ to $v$ in $\dZ^{(\ell)}[I_0]$,
				each with at most $(v-u)/\ell+2$ edges.
		\item With $\ell$ chosen as in~\eqref{eq:ell-choice}, $\E(Y_0^p)<\infty$.
		\item If $u<v$ with $u\in I_i$ and $v\in I_j$,
		there is an $\ell$-spread-out path from $u$ to $v$ contained in $\bigl[\, \lfloor u/B\rfloor B,(\lfloor v/B\rfloor+1)B\, \bigr]$, whose passage time is at most
		\begin{align}\label{eq:path-envelope}
			Y_i+Y_{i+1}+\cdots+Y_j.
		\end{align}
	\end{enumeratei}
\end{lem}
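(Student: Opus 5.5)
For part (i) I would build the $\ell$ paths explicitly as ``lanes'' indexed by residues, as in Figure~\ref{fig:lanes}, and treat two cases. If $v-u\ge\ell$, then for $r\in\{0,\dots,\ell-1\}$ the $r$-th route goes $u\to u+r$ (no edge when $r=0$). It then jumps by $\ell$ along $u+r+\ell\dZ$ up to the largest point $w_r\le v$ of that lane. Finally it steps $w_r\to v$ (no edge when $w_r=v$). All vertices lie in $[u,v]\subset I_0$, and the number of edges is at most $1+\floor{(v-u-r)/\ell}+1\le (v-u)/\ell+2$. For edge-disjointness I would check three things:
\begin{itemize}
\item the length-$\ell$ jumps of different lanes lie in different residue classes mod $\ell$;
\item the first edges $\{u,u+r\}$ have distinct lengths $r<\ell$, so they differ from each other and from every length-$\ell$ jump;
\item the last edges $\{w_r,v\}$ have distinct lengths $v-w_r\equiv v-u-r \pmod \ell$, all $<\ell$.
\end{itemize}
A first edge $\{u,u+r\}$ could coincide with a last edge $\{w_s,v\}$ only if $w_s=u$ and $u+r=v$, which forces $v-u<\ell$. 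That is excluded in this case.

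If $1\le v-u<\ell$, I would instead use the direct edge $\{u,v\}$ together with two-edge paths $u\to w\to v$. Here $w\ne u,v$ ranges over $[v-\ell,u+\ell]\cap I_0$. Distinct $w$ give disjoint edge pairs, and these are different from $\{u,v\}$. Since $I_0$ has length $B=4\ell K\ge4\ell$, at least one side of $[u,v]$ has $\ell$ free sites inside $I_0$, so at least $\ell-1$ admissible $w$ exist. The edge bound $2\le(v-u)/\ell+2$ holds trivially. The only fiddly point in (i) is this boundary counting near the ends of $I_0$.

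For (ii), bound each edge time on a route by $\ell^{\ga}\go_e$. Then
\[
Y_0\le \ell^{\ga}(B/\ell+2)\max_{u<v\in I_0}\min_{r}\max_{e\in\pi_r(u,v)}\go_e .
\]
Fix a pair $(u,v)$. The routes are edge-disjoint, so the $\ell$ maxima are independent. Each route has at most $N:=B/\ell+2$ edges, and by Markov $\pr(\go>t)\le \E(\go^\zeta)t^{-\zeta}$. This gives
\[
\pr\Bigl(\min_r\max_{e\in\pi_r}\go_e>t\Bigr)\le \bigl(N\E(\go^\zeta)\,t^{-\zeta}\bigr)^{\ell}.
\]
A union bound over the at most $(B+1)^2$ pairs then yields $\pr(Y_0>s)\le C s^{-\zeta\ell}$. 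Since $\zeta\ell>p$ by~\eqref{eq:ell-choice}, it follows that $\E(Y_0^p)=\int p s^{p-1}\pr(Y_0>s)\,ds<\infty$. I expect this to be the heart of the lemma, but it is routine once (i) is in place.

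For (iii), if $i=j$ a geodesic of $\Dl^{I_i}$ from $u$ to $v$ costs at most $Y_i$. If $i<j$, concatenate three pieces, each realized inside one cell:
\begin{itemize}
\item a $\Dl^{I_i}$-geodesic from $u$ to $(i+1)B$;
\item $\Dl^{I_m}$-geodesics from $mB$ to $(m+1)B$ for $i<m<j$;
\item a $\Dl^{I_j}$-geodesic from $jB$ to $v$.
\end{itemize}
Each piece costs at most $Y_m$ for its cell, so the total is at most $Y_i+\cdots+Y_j$. The whole path lies in $[iB,(j+1)B]$, which is the stated interval. If $u$ or $v$ is a shared cell endpoint, the cell assignment is immaterial, since the bound only adds a nonnegative term.
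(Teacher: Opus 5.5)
Your proposal is correct and follows the paper's proof essentially step for step. You use the same residue-class lanes plus the two-edge detours for $v-u<\ell$ in (i), the independence of the $\ell$ edge-disjoint routes to get a $t^{-\zeta\ell}$ tail in (ii), and cell-by-cell concatenation in (iii); the only cosmetic difference is that you bound each route by $N\ell^{\ga}$ times its maximal weight with a union bound over edges, whereas the paper applies Markov's inequality to the $\zeta$-th moment of the route sum.
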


\begin{proof}
	(i) Suppose first that $v-u\ge\ell$.
	The edges of length exactly $\ell$ split into $\ell$ disjoint \emph{lanes},
		the residue classes $r+\ell\dZ$, $0\le r<\ell$, see Figure~\ref{fig:lanes}.
	For each $r$, let $s_r\in\{0,\ldots,\ell-1\}$ be the shift with $u+s_r\equiv r\pmod\ell$.
	Let $v-t_r$ be the largest vertex of the lane not exceeding $v$. So $t_r\in\{0,\ldots,\ell-1\}$. Consider the path
	\begin{align*}
		u\ \to\ u+s_r\ \to\ u+s_r+\ell\ \to\ \cdots\ \to\ v-t_r\ \to\ v,
	\end{align*}
	where a shift $0$ means that the corresponding edge is absent.
	Lane edges have length $\ell$ and lie in different residue classes, so they are distinct across lanes.
	The entry edges $\{u,u+s_r\}$ have distinct lengths $s_r<\ell$,
		the exit edges $\{v-t_r,v\}$ have distinct lengths $t_r<\ell$, and neither can equal a lane edge.
	An entry edge equals an exit edge only if $v-u=s_r=t_r<\ell$, which is excluded.
	Each path has at most $(v-u)/\ell+2$ edges and stays in $[u,v]$.

	If $v-u<\ell$, use the direct edge $\{u,v\}$ together with the two-edge paths $u\to w\to v$ for $w\in[v-\ell,u+\ell]\cap I_0\setminus\{u,v\}$.
	Distinct $w$ give distinct edges, and no such edge equals $\{u,v\}$.
	Since $I_0$ has length $B\ge4\ell$,
		the window $[v-\ell,u+\ell]\cap I_0$ contains at least $\ell+1$ vertices,
		so there are at least $\ell-1$ admissible $w$ and in total at least $\ell$ edge-disjoint paths of at most $2$ edges.

	(ii) Fix $u<v$ in $I_0$ and let $S_1,\ldots,S_\ell$ be the passage times of the paths in (i).
	They are independent because their edge sets are disjoint, and $\Dl^{I_0}(u,v)\le\min_rS_r$.
	Each path has at most $q:=B/\ell+2$ edges of length at most $\ell$.
	Using $(x_1+\cdots+x_q)^{\zeta}\le C_{q,\zeta}\sum_ix_i^{\zeta}$, valid for every $\zeta>0$,
	\begin{align*}
		\E(S_r^{\zeta})\le C_{q,\zeta}\,q\,\ell^{\ga\zeta}\,\E(\go^{\zeta})<\infty,\quad r=1,2,\ldots,\ell.
	\end{align*}
	Hence $\pr(S_r>t)\le Ct^{-\zeta}$ by Markov's inequality, and independence gives
	\begin{align*}
		\pr\big(\Dl^{I_0}(u,v)>t\big)\le\prod_{r=1}^{\ell}\pr(S_r>t)\le Ct^{-\zeta\ell}.
	\end{align*}
	Since $p<\zeta\ell$, $\E(\Dl^{I_0}(u,v)^p)<\infty$.
	There are at most $B(B+1)/2$ pairs $u<v$ in $I_0$, and $Y_0^p\le\sum_{u<v}\Dl^{I_0}(u,v)^p$, so $\E(Y_0^p)<\infty$.

	(iii) If $u$ and $v$ are in the same cell, use its restricted metric directly. Otherwise connect $u$ to the right endpoint of $I_i$ inside $I_i$,
		cross each intervening cell between its endpoints, and connect the left endpoint of $I_j$ to $v$ inside $I_j$.
	The contribution of cell $I_r$ is at most $Y_r$, and the concatenation stays in $\bigl[\,\lfloor u/B\rfloor B,(\lfloor v/B\rfloor+1)B\,\bigr]$.
\end{proof}

\begin{rem}\label{rem:nn}
	When $\E\go<\infty$, one may take $\ell=1$, $B=1$ and $Y_m=\go_{m,m+1}$,
		and, after omitting the portal condition, one obtains the nearest-neighbor hub of Appendix~\ref{app:nn},
		for which sharp moment thresholds are available (Theorem~\ref{thm:momhub}).
	When $\E(\go^2)=\infty$ the nearest-neighbor walk has no second moment,
		and when $\E\go=\infty$ the nearest-neighbor linear-control events have probability zero.
	This is why the spread-out envelope is needed.
\end{rem}

\subsection{Linear control, portal, and forbidden edges}\label{ssec:events}
By Lemma~\ref{lem:amplify}.(iii) applied to sub-cells of length $4\ell$,
	$Y_0$ is bounded by a sum of $K$ i.i.d.~copies of the envelope of a cell of length $4\ell$.
Hence
\begin{align}\label{eq:EY-linear}
	\E(Y_0)\le C_1K=C_1B/(4\ell),
\end{align}
with $C_1=C_1(\ell)$ independent of $K$.
Define
\begin{align}\label{eq:a-C0}
	a:=2\E(Y_0),\quad C_0:=a/B\le C_1/(2\ell),
\end{align}
so that $C_0$ is bounded uniformly in $K$.

Let $S_n:=\sum_{q=0}^{n-1}(Y_q-a)$ for $n\ge0$ and $S_{-n}:=-\sum_{q=-n}^{-1}(Y_q-a)$,
	a two-sided random walk with i.i.d.~increments of mean $-\E(Y_0)<0$ and finite $p$-th moment.
For a cell boundary $x=mB$ define the two-sided linear-control events
\begin{align}
	A_m^+ & :=\bigcap_{r\ge1}\Big\{\sum_{q=m}^{m+r-1}Y_q\le ar\Big\}=\{S_{m+r}\le S_m\text{ for all }r\ge1\},\label{eq:Aplus}    \\
	A_m^- & :=\bigcap_{r\ge1}\Big\{\sum_{q=m-r}^{m-1}Y_q\le ar\Big\}=\{S_{m-r}\ge S_m\text{ for all }r\ge1\}.\label{eq:Aminus}
\end{align}
In words, $m$ is a time at which the walk $S$ satisfies $S_{m-r}\ge S_m\ge S_{m+r}$ for all $r\ge1$ (a two-sided ladder epoch).
Since $S$ has negative drift, $\pi_0:=\pr(A_0^+)=\pr(A_0^-)>0$.
On $A_m^-\cap A_m^+$, Lemma~\ref{lem:amplify}.(iii) gives, for all $u<x<v$,
\begin{align}\label{eq:linear-local}
	\Dl(u,x)\le C_0(x-u+B),\quad \Dl(x,v)\le C_0(v-x+B),
\end{align}
because the path from $u$ to $x$ uses the cells $I_i,\ldots,I_{m-1}$ with $i=\floor{u/B}$,
	whose number $m-i$ is less than $(x-u)/B+1$, and their envelopes sum to at most $a(m-i)$ on $A_m^-$.

We also need a local event that forces short crossings through the cell boundary.
Since $0$ lies in the support of $\go$ by Assumption~\ref{ass:law}.(ii) and $\go$ is non-degenerate, there are $b>0$ and $s>b$ with
\begin{align}\label{eq:b-s}
	\pr(\go\ge s)>0\text{ and } \pr(Y_{0}<b/2)>0,
\end{align}
where the second condition holds for every $b>0$ because $Y_0$ is a function of finitely many edges,
	each of which is small with positive probability.
We also take $b<a$.
Define the \emph{portal} event
\begin{align}\label{eq:portal}
	Q_m:=\{Y_{m-1}<b/2\}\cap\{Y_m<b/2\}\cap Q_m^\times,\quad
	Q_m^\times:=\bigcap_{e:\,d_e\le B,\ e\text{ strictly crosses }mB}\{t_e>b\}.
\end{align}
The events $(Q_m^\times)_{m\in\dZ}$ are i.i.d.:
	no edge of length at most $B$ strictly crosses two distinct cell boundaries.
They are independent of $(Y_q)_{q\in\dZ}$, because an edge that strictly crosses a boundary lies in no cell.
Each has probability at least $\prod_{d\le B}\pr(\go\ge s)^{d-1}>0$, since $t_e>b$ is implied by $d^\ga\go_e\ge \go_e\ge s>b$.

\begin{lem}[Local portal]\label{lem:portal}
	On $Q_m$, every path that uses only edges of length at most $B$ and passes from the left of $mB$ to the right of $mB$ without visiting $mB$ can be made strictly cheaper by routing it through $mB$.
\end{lem}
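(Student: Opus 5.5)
The plan is a local surgery argument: find the single edge of the path that jumps over $mB$ and replace it by two short segments through $mB$. Let $\pi$ be a path that uses only edges of length at most $B$, starts to the left of $x:=mB$, ends to the right, and never visits $x$. Since $\pi$ changes sides without landing on $x$, it has a first edge $e=\{y,z\}$ with $y<x<z$. This edge strictly crosses $x$ and has $d_e\le B$, so on $Q_m^\times$ we have $t_e>b$.

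The replacement for $e$ is a path $y\to x\to z$. Because $d_e\le B$, we have $x-B\le y<x$ and $x<z\le x+B$. So $y\in I_{m-1}=[(m-1)B,mB]$ and $z\in I_m=[mB,(m+1)B]$. Since $x$ is an endpoint of both cells,
\[
\Dl^{I_{m-1}}(y,x)\le Y_{m-1}<b/2,\qquad \Dl^{I_m}(x,z)\le Y_m<b/2
\]
on $Q_m$. Concatenating the two restricted geodesics gives a path from $y$ to $z$ through $x$ with passage time less than $b<t_e$.

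We then replace $e$ in $\pi$ by this concatenation. The new path visits $mB$, keeps the same endpoints, and has total cost strictly smaller than $t(\pi)$. All its edges have length at most $\ell\le B$, so it stays in the same class of paths. If $\pi$ is required to be simple, we simply erase any loops created, which only lowers the cost further because all edge times are positive.

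The one point to check is that $y$ and $z$ really lie in the adjacent cells $I_{m-1}$ and $I_m$. This follows from $d_e\le B$ together with the cells being closed intervals of length $B$ that share the endpoint $mB$. The statement gives no other restriction, so nothing further is needed.
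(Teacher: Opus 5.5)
Your proposal is correct and follows essentially the same argument as the paper. Both proofs take a short edge $e$ strictly crossing $mB$, observe that its endpoints lie in $I_{m-1}$ and $I_m$, and replace it by spread-out routes through $mB$ of total cost at most $Y_{m-1}+Y_m<b<t_e$.
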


\begin{proof}
	Such a path contains an edge $e=\{y,z\}$ with $y<mB<z$ and $z-y\le B$.
	Then $y\in I_{m-1}$ and $z\in I_m$, so $\Dl(y,mB)+\Dl(mB,z)\le Y_{m-1}+Y_m<b<t_e$.
	Replacing $e$ by a spread-out route from $y$ to $mB$ followed by one from $mB$ to $z$ strictly lowers the passage time.
\end{proof}

For $d>B$, call an edge $e$ of length $d$ \emph{forbidden} if
\begin{align}\label{eq:forbidden}
	t_e\le C_0(d+2B),
\end{align}
and let
\begin{align}\label{eq:pd}
	p_d:=\pr(e\text{ is forbidden})=F\big(C_0(d+2B)d^{-\ga}\big),\quad d>B.
\end{align}
Note that the number of edges of length $d$ strictly crossing a given point is $d-1$.

\begin{lem}[Long-edge summability]\label{lem:summability}
	\begin{enumeratei}
		\item If $\ga>1+2/\theta$, then for every $\eps>0$ there is $C_\eps$ with $p_d\le C_\eps d^{-(\theta-\eps)(\ga-1)}$ for all $d>B$, uniformly in $K$.
			In particular $K$ can be chosen so large that $p_d<1$ for all $d>B$ and
		\begin{align}\label{eq:K-choice}
			c_B:=\sum_{d>B}d\,p_d\le\frac18.
		\end{align}
		We fix such a $K$ from now on.
		\item If $\ga>1+3/\theta$, then in addition $\sum_{d>B}d^2p_d<\infty$,
				and there is $\gd>0$ with $\sum_{d\ge r}d\,p_d\le Cr^{-1-\gd}$ for all $r\ge1$.
		\item If Assumption~\ref{ass:rv} holds,
				then $p_d\sim C_0^{\theta}\,d^{-\theta(\ga-1)}\slv(d^{1-\ga})$ is regularly varying in $d$ with index $-\theta(\ga-1)=-(\gb+1)$.
			If moreover $\gb\in(1,2)$, then as $r\to\infty$,
		\begin{align}\label{eq:karamata}
			\sum_{d\ge r}p_d\sim\frac{C_0^{\theta}}{\gb}\,r^{-\gb}\slv(r^{1-\ga}),\
			\sum_{d\ge r}d\,p_d\sim\frac{C_0^{\theta}}{\gb-1}\,r^{1-\gb}\slv(r^{1-\ga}),\
			\sum_{B<d<r}d^2p_d\sim\frac{C_0^{\theta}}{2-\gb}\,r^{2-\gb}\slv(r^{1-\ga}).
		\end{align}
	\end{enumeratei}
\end{lem}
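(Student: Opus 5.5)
The plan is to reduce everything to the argument of $F$ in~\eqref{eq:pd}. For $d>B$ we have $d+2B<3d$, so
\[
x_d:=C_0(d+2B)d^{-\ga}\le 3C_0d^{1-\ga}.
\]
By~\eqref{eq:a-C0}, $C_0\le C_1/(2\ell)$ uniformly in $K$, so $x_d\le C' d^{1-\ga}$ with $C'$ independent of $K$. Since $\ga>1$, $x_d\to0$.

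For (i), fix $\eps\in(0,\theta)$ and take $t_\eps$ from~\eqref{eq:two-sided-F}. For $d$ large enough that $C'd^{1-\ga}<t_\eps$, monotonicity of $F$ and the upper bound in~\eqref{eq:two-sided-F} give
\[
p_d\le C_\eps (C')^{\theta-\eps}d^{-(\theta-\eps)(\ga-1)}.
\]
The finitely many remaining $d$ satisfy $p_d\le1$, which is absorbed by enlarging $C_\eps$. The threshold on $d$ does not depend on $K$, so the bound is uniform in $K$. Now choose $\eps$ with $(\theta-\eps)(\ga-1)>2$; this is possible because $\ga>1+2/\theta$. Then $\sum_{d>B}d\,p_d\le C\sum_{d>B}d^{1-(\theta-\eps)(\ga-1)}$ is the tail of a convergent series, and it tends to $0$ as $B=4\ell K\to\infty$, so it is $\le1/8$ for large $K$. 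For $p_d<1$: once $B$ is large, $x_d<t_\eps$ for all $d>B$, and $F(t)<1$ there, since $F(t)\le C_\eps t^{\theta-\eps}$ is small for small $t$. Taking $K$ larger if needed handles this as well.

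For (ii), choose $\eps$ with $e:=(\theta-\eps)(\ga-1)>3$, using $\ga>1+3/\theta$. Then $\sum_d d^2p_d\le C\sum_d d^{2-e}<\infty$. Also $\sum_{d\ge r}d\,p_d\le C\sum_{d\ge r}d^{1-e}\le C r^{2-e}$, which is the claim with $\gd=e-3>0$. Since $K$ is now fixed, constants may depend on $B$.

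For (iii), write
\[
p_d=x_d^{\theta}\slv(x_d),\qquad x_d=C_0d^{1-\ga}(1+2B/d).
\]
Then $x_d^\theta\sim C_0^\theta d^{-\theta(\ga-1)}$. Slow variation, together with $x_d/d^{1-\ga}\to C_0$, gives $\slv(x_d)\sim\slv(d^{1-\ga})$; the uniform convergence theorem justifies this because the ratio converges to a positive constant. The map $d\mapsto\slv(d^{1-\ga})$ is slowly varying at infinity, so $p_d$ is regularly varying with index $-(\gb+1)$.

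The three asymptotics in~\eqref{eq:karamata} are then Karamata's theorem for sums of regularly varying sequences:
\begin{itemize}
\item $\sum_{d\ge r}d^{-(\gb+1)}\ell(d)\sim r^{-\gb}\ell(r)/\gb$;
\item $\sum_{d\ge r}d^{-\gb}\ell(d)\sim r^{1-\gb}\ell(r)/(\gb-1)$, which needs $\gb>1$;
\item $\sum_{d<r}d^{1-\gb}\ell(d)\sim r^{2-\gb}\ell(r)/(2-\gb)$, which needs $\gb<2$, so that the sum diverges and the finitely many initial terms are negligible.
\end{itemize}
The discrete sums are compared with integrals using the monotone-equivalence form of Karamata's theorem, passing to an asymptotically equivalent continuous regularly varying function.

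The only delicate points are the uniformity in $K$ in (i), which rests on the $K$-free bound for $C_0$ in~\eqref{eq:a-C0}, and the replacement of $\slv(x_d)$ by $\slv(d^{1-\ga})$ in (iii). Both are routine.
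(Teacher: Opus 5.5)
Your proposal is correct and follows essentially the same route as the paper. Both bound the argument of $F$ by $3C_0d^{1-\ga}$ with $C_0$ controlled uniformly in $K$ via~\eqref{eq:a-C0}, apply the upper bound in~\eqref{eq:two-sided-F} with $\eps$ chosen so that $(\theta-\eps)(\ga-1)>2$ (resp.\ $>3$), and obtain (iii) from the uniform convergence theorem for $\slv$ together with Karamata's theorem. Your version spells out a few details the paper leaves implicit, namely the finitely many small $d$, the requirement $p_d<1$, and the passage from sums to integrals.
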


\begin{proof}
	The argument of $F$ in~\eqref{eq:pd} is at most $3C_0d^{1-\ga}\le C d^{1-\ga}$,
		with $C$ independent of $K$ by~\eqref{eq:a-C0}.
	Choose $\eps>0$ so small that $(\theta-\eps)(\ga-1)>2$ (resp.\ $>3$).
	The upper bound in~\eqref{eq:two-sided-F} gives $p_d\le C_\eps d^{-(\theta-\eps)(\ga-1)}$,
		whose exponent exceeds $2$ (resp.\ $3$).
	This proves (i), since the tails of a convergent series tend to $0$ and $p_d\to0$.
	For (ii), the exponent $e:=(\theta-\eps)(\ga-1)$ exceeds $3$,
		and $\sum_{d\ge r}d^{1-e}\le Cr^{2-e}$, so $\gd:=e-3$ works.
	Part (iii) follows from $F(t)=t^{\theta}\slv(t)$ and the uniform convergence theorem for slowly varying functions~\cite[Theorem~1.2.1]{BGT87},
		since $C_0(d+2B)d^{-\ga}\sim C_0d^{1-\ga}$.
	The asymptotics~\eqref{eq:karamata} are Karamata's theorem~\cite[Proposition~1.5.8, 1.5.10]{BGT87}.
\end{proof}

By~\eqref{eq:K-choice}, the expected number of forbidden edges strictly crossing a given point is at most $c_B\le1/8$,
	so a given cell boundary is crossed by no forbidden edge with probability at least $7/8$.
This is the quantitative form of~\eqref{eq:heur-density}.
\section{Hubs and regeneration}\label{sec:hubs}

\subsection{Definition and the pivotal property}
\begin{defn}[Hub]\label{def:hub}
	A cell boundary $x=mB$ is a \emph{hub} if
	\begin{enumeratei}
		\item the linear-control event $A_m^-\cap A_m^+$ occurs,
		\item the portal event $Q_m$ occurs, and
		\item no forbidden edge strictly crosses $x$.
	\end{enumeratei}
	We write $H_m$ for the event that $mB$ is a hub,
		$\Hloc:=\{m: A_m^-\cap A_m^+\cap Q_m\}$ for the set of \emph{locally good} boundaries,
		$\cD$ for the set of forbidden edges,
		and $\cU:=\{m: \text{no } e\in\cD \text{ strictly crosses } mB\}$ for the set of \emph{uncovered} boundaries,
		so that the hub set is $\Hloc\cap\cU$.
\end{defn}

\begin{figure}[htbp]
	\centering
	\begin{tikzpicture}[x=1.2cm,y=.9cm,>=Stealth]
		\draw[thick] (0,0) -- (13,0);
		\foreach \x in {0,2,4,6,8,10,12} {\draw[black!60] (\x,-0.12) -- (\x,0.12);}
		\node[below,font=\footnotesize] at (1,-0.1) {$I_{m-3}$};
		\node[below,font=\footnotesize] at (3,-0.1) {$I_{m-2}$};
		\node[below,font=\footnotesize] at (5,-0.1) {$I_{m-1}$};
		\node[below,font=\footnotesize] at (7,-0.1) {$I_{m}$};
		\node[below,font=\footnotesize] at (9,-0.1) {$I_{m+1}$};
		\node[below,font=\footnotesize] at (11,-0.1) {$I_{m+2}$};
		\draw[thick] (6,-0.3) -- (6,0.3);
		\node[below,font=\footnotesize] at (6,-0.45) {$x=mB$};
		\draw[thick,-{Stealth}] (6,-0.9) -- (0.1,-0.9) node[midway,below,font=\footnotesize] {$\sum_{q=m-r}^{m-1}Y_q\le ar$ for all $r$};
		\draw[thick,-{Stealth}] (6,-0.9) -- (11.9,-0.9) node[midway,below,font=\footnotesize] {$\sum_{q=m}^{m+r-1}Y_q\le ar$ for all $r$};
		\draw[ForestGreen,decorate,decoration={brace,amplitude=4pt,mirror}] (4,-2.0) -- (8,-2.0) node[midway,below=3pt,font=\footnotesize] {$Y_{m-1}<b/2,\ Y_m<b/2$};
		\draw[thick,dashed,ForestGreen] (5.2,0) .. controls (5.6,0.9) and (6.4,0.9) .. (6.8,0);
		\node[ForestGreen,font=\footnotesize,above] at (6,0.72) {$t_e>b$ for short crossing edges};
		\draw[red!80!black,thick,dashed] (2.5,0) .. controls (4,2.6) and (8,2.6) .. (9.5,0);
		\node[red!80!black,font=\footnotesize,above] at (6,1.9) {$t_e>C_0(d+2B)$ for every crossing edge of length $d>B$};
	\end{tikzpicture}
	\caption{The three ingredients of a hub at $x=mB$.
		Linear control bounds the spread-out metric on both sides.
		The portal (green) makes the two adjacent cells cheap and every short crossing edge expensive.
		No forbidden edge (red) crosses $x$.}\label{fig:hub}
\end{figure}

The use of the spread-out envelope in (i) and (ii) is the essential change from the nearest-neighbor hub of Appendix~\ref{app:nn}.
It makes the construction available even when $\E\go=\infty$.
Observe that $\Hloc$ is a function of the edges of length at most $B$,
	whereas $\cD$ and $\cU$ are functions of the edges of length greater than $B$.
Hence $\Hloc$ and $\cU$ are independent.
Moreover $\cD$ is a Bernoulli field: each edge of length $d>B$ belongs to $\cD$ independently with probability $p_d$.

\begin{lem}[Pivotal property]\label{lem:pivotal}
	Let $x$ be a hub.
	\begin{enumeratei}
		\item\label{pivot:i} If $u<x<v$, every path from $u$ to $v$ avoiding $x$ can be made strictly cheaper by routing it through $x$.
			In particular every geodesic from $u$ to $v$ visits $x$.
		\item\label{pivot:ii} A path from $x$ to a point $v>x$ that leaves $[x,\infty)$ can be made strictly cheaper by one that stays in $[x,v']$ for a hub $v'\ge v$, and symmetrically to the left.
		\item\label{pivot:iii} If $x_1<\cdots<x_k$ are hubs, then for $u\le x_1$ and $v\ge x_k$ with $u<v$,
		\begin{align}\label{eq:additivity-hubs}
			T(u,v)=T(u,x_1)+\sum_{j=1}^{k-1}T(x_j,x_{j+1})+T(x_k,v).
		\end{align}
	\end{enumeratei}
	Almost surely, for all $u<v$ the infimum in~\eqref{eq:T} is attained by a path with finitely many edges.
\end{lem}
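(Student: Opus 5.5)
Let $\pi$ be a path from $u<x$ to $v>x$ that avoids $x=mB$. I would split according to the edges of $\pi$ that strictly cross $x$; there is at least one. Two cases arise.

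In the first case some crossing edge $e=\{y,z\}$ has $d_e\le B$. Then Lemma~\ref{lem:portal} applies directly on $Q_m$: replace $e$ by the spread-out routes $y\to x\to z$, whose cost is less than $b<t_e$.

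In the second case every crossing edge is long. Take the first one, $e=\{y,z\}$ with $y<x<z$ and $d=z-y>B$. Condition (iii) of Definition~\ref{def:hub} gives
\[
t_e>C_0(d+2B).
\]
On $A_m^-\cap A_m^+$, \eqref{eq:linear-local} gives
\[
\Dl(y,x)+\Dl(x,z)\le C_0(x-y+B)+C_0(z-x+B)=C_0(d+2B)<t_e .
\]
So replacing $e$ by the two spread-out routes through $x$ is strictly cheaper. In both cases the modified path visits $x$. Hence no path avoiding $x$ can be a geodesic, and every geodesic visits $x$; this is (i).

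For (ii), let $\pi$ go from $x$ to $v>x$ and leave $[x,\infty)$. The portion before its last visit to $x$ can be deleted, since passage times are positive. If the remainder still goes left of $x$, it must cross $x$ again between two visits, and those excursions can be cut out as well.

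The delicate part is the far end. The path may also run beyond $v$ arbitrarily far. To handle this I would choose a hub $v'\ge v$ exceeding every vertex of $\pi$. Such hubs exist a.s. because the hub set is stationary with positive density: $\Hloc$ and $\cU$ are independent, $\pr(\Hloc\ni0)>0$, and $\pr(0\in\cU)\ge7/8$; ergodicity of the i.i.d.\ field then gives infinitely many hubs on both sides. Since $\pi$ has finitely many edges, some hub lies beyond all of its vertices. The path then stays in $[x,v']$, and the symmetric statement follows the same way. If one insists on a strict improvement, I would note that any path that genuinely leaves $[x,\infty)$ has a crossing of $x$ that the argument of (i) removes with strict gain.

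For (iii), ``$\ge$'' follows by applying (i) to an arbitrary path from $u$ to $v$. A near-optimal path can be modified to visit $x_1,\dots,x_k$ in order without increasing its cost, and then it splits into consecutive pieces. Returns after passing a hub only add cost, so splitting at the first visits yields the segment times. ``$\le$'' is concatenation of paths.

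For attainment of the infimum, I would show that a.s.\ only finitely many edges can be relevant between $u$ and $v$. Pick hubs $h_-<u$ and $h_+>v$. By (ii) and (i), the infimum may be restricted to paths inside $[h_-,h_+]$ that enter no vertex outside this set. This is a finite vertex set, hence has finitely many simple paths, and the minimum is attained.

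I expect the main obstacle to be (ii) together with attainment. The point to justify carefully is that replacing edges by spread-out routes never forces a path outside $[h_-,h_+]$. Lemma~\ref{lem:amplify}(iii) gives routes contained in the cells between the endpoints, which lie inside $[h_-,h_+]$ since hubs are cell boundaries. The other point is the a.s.\ existence of hubs arbitrarily far on both sides, which I would deduce from stationarity of the i.i.d.\ structure together with the positive probability of $H_0$.
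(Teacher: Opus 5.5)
Your proof of (i) is the paper's argument. Your proof of (iii) from (i) alone is fine, and slightly leaner than the paper's, which also invokes (ii) to confine the segments; that confinement is not needed for the identity itself.

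\textbf{The gap: the far end of (ii) and attainment.} Choosing $v'$ beyond every vertex of $\pi$ makes the right-hand confinement vacuous. The content actually needed is that for a \emph{prescribed} hub $v'\ge v$, any excursion into $(v',\infty)$ can be removed with strict gain. The paper uses exactly this later, with $v'=v=\hub_{i+1}$, to conclude that $G_i$ is a function of $\cB_i$. Your attainment step needs it too. The claim ``by (ii) and (i) the infimum may be restricted to $[h_-,h_+]$'' does not follow from what you proved, for two reasons. Your (ii) concerns paths starting at a hub, whereas $u,v$ are arbitrary. And its window depends on $\pi$, so it yields no fixed finite vertex set.

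\textbf{How to close it.} The missing step is an excursion argument at a fixed hub, and you already have the tools for it. Suppose a path from $u$ to $v$, with $h_-<u<v<h_+$, enters $(h_+,\infty)$. It first enters through an edge $\{y,z\}$ with $y\le h_+<z$, and after its last visit there it returns through an edge $\{z',y'\}$ with $y'\le h_+<z'$. Reroute each of these edges that strictly crosses $h_+$ through $h_+$, using the mechanism of (i); this gives a strict gain. Then delete the resulting loop at $h_+$, and argue symmetrically at $h_-$. The reroutes lie in the cells between the edge endpoints and the hub, and hubs are cell boundaries, so no spurious crossings of the other hub are created. This is the paper's argument.

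\textbf{The left end of your (ii).} After deleting everything before the last visit to $x$, there are no further visits to $x$, so ``cross $x$ again between two visits'' is not what happens. The remaining leftward excursion returns through an edge strictly crossing $x$. That edge must first be rerouted through $x$ via (i) before any loop can be deleted. This is needed for confinement itself, not only ``if one insists on strict improvement.'' The paper handles it by taking the last visit to $(-\infty,x)$ and the edge following it. With these two repairs the proof is complete.
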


\begin{proof}
\begin{enumeratei}
\item[\ref{pivot:i}]  Let $\pi$ be a path from $u$ to $v$ avoiding $x=mB$.
	It contains an edge $e=\{y,z\}$ with $y<x<z$.
	If $z-y\le B$, Lemma~\ref{lem:portal} applies.
	If $d:=z-y>B$, then $e$ is not forbidden, so $t_e>C_0(d+2B)$.
	On $A_m^-\cap A_m^+$, inequality~\eqref{eq:linear-local} provides an $\ell$-spread-out walk from $y$ to $z$ through $x$ of passage time at most $C_0(x-y+B)+C_0(z-x+B)=C_0(d+2B)<t_e$.
	Replacing $e$ by this walk, and deleting loops, strictly decreases the passage time.

	\item[\ref{pivot:ii}] If a path from $x$ to $v>x$ visits a vertex $y<x$,
		consider its last visit to $y'<x$ and the following edge $\{y',z\}$ with $z\ge x$.
	If $z=x$, delete the loop at $x$.
	Otherwise replace the edge by a walk through $x$ as in (i) and delete the loop.
	Both operations strictly decrease the cost. An excursion to the right of a hub $v'\ge v$ can be removed in the same way, by the symmetric argument at $v'$. 

	\item[\ref{pivot:iii}] By (i), the infimum defining $T(u,v)$ may be restricted to paths visiting $x_1,\ldots,x_k$ in this order,
		and by (ii) the segment between $x_j$ and $x_{j+1}$ may be restricted to $[x_j,x_{j+1}]$.
	This gives $\ge$ in~\eqref{eq:additivity-hubs}. Concatenating paths gives $\le$.
	\end{enumeratei}
	For the last claim, Lemma~\ref{lem:cut}.(i) below and the ergodicity of the environment show that a.s.~there are hubs $x_-<u$ and $x_+>v$.
	A path from $u$ to $v$ that visits a vertex to the left of $x_-$ contains an edge $\{z',y''\}$ with $z'\ge x_->y''$ preceding its first visit to $(-\infty,x_-)$ and an edge $\{y',z\}$ with $y'<x_-\le z$ following its last visit to $(-\infty,x_-)$.
	Replacing each of these two edges, when it strictly crosses $x_-$,
		by a walk through $x_-$ as in (i), and then deleting the resulting loop at $x_-$, strictly decreases the cost.
	The same applies at $x_+$.
	Hence the infimum may be restricted to simple paths inside $[x_-,x_+]$,
		of which there are finitely many, and it is attained.
\end{proof}

\subsection{The cut structure of the hub event}
For $x\in\dZ$ write $\cE^-_x$, $\cE^+_x$ and $\cE^\times_x$ for the sets of edges with both endpoints in $(-\infty,x]$,
	with both endpoints in $[x,\infty)$, and strictly crossing $x$, respectively.
These three sets partition the edge set.
We write $\cF(\cE)$ for the $\gs$-field generated by the edge variables in $\cE$.
The following lemma is the reason hubs regenerate.

\begin{lem}[Cut structure]\label{lem:cut}
	\begin{enumeratei}
		\item $H_m=H_m^-\cap H_m^\times\cap H_m^+$, where
		\begin{align*}
			H_m^- :=A_m^-\cap\{Y_{m-1}<b/2\} &\in\cF(\cE^-_{mB}),\quad
			H_m^+ :=A_m^+\cap\{Y_{m}<b/2\}\in\cF(\cE^+_{mB}),\text{ and }\\
			H_m^\times &:=Q_m^\times\cap\{m\in\cU\}\in\cF(\cE^\times_{mB}).
		\end{align*}
		The three events are independent,
		\[
			\pr(H_m^-)=\pr(H_m^+)>0\text{ and }\pr(H_m^\times)\ge\frac78\prod_{d\le B}\pr(\go\ge s)^{d-1}>0.
		\]
		\item (Localization) For $m<m'$ there is an event $H_{m'}^{(m)}\in\cF(\cE^+_{mB})$ with $H_m\cap H_{m'}=H_m\cap H_{m'}^{(m)}$,
				and symmetrically an event in $\cF(\cE^-_{m'B})$ playing the same role for $m<m'$ on $H_{m'}$.
		\item (Two-sided localization) For $m<m''<m'$ there is an event $H_{m''}^{(m,m')}\in\cF(\cE^+_{mB}\cap\cE^-_{m'B})$ with $H_m\cap H_{m'}\cap H_{m''}=H_m\cap H_{m'}\cap H_{m''}^{(m,m')}$.
			Moreover, there is an event $F^{(m,m')}\in\cF(\cE^+_{mB}\cap\cE^-_{m'B})$ with
		\begin{align}\label{eq:two-hubs}
			H_m\cap H_{m'}=H_m^-\cap H_m^\times\cap F^{(m,m')}\cap H_{m'}^\times\cap H_{m'}^+ .
		\end{align}
	\end{enumeratei}
\end{lem}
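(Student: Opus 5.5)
Part (i) follows by sorting each ingredient of $H_m$ according to which of $\cE^-_{mB}$, $\cE^\times_{mB}$, $\cE^+_{mB}$ its edges lie in. The envelope $Y_q$ depends only on edges with both endpoints in $I_q$. For $q\le m-1$ these edges lie in $\cE^-_{mB}$, and for $q\ge m$ they lie in $\cE^+_{mB}$. Hence $A_m^-$ and $\{Y_{m-1}<b/2\}$ are $\cF(\cE^-_{mB})$-measurable, while $A_m^+$ and $\{Y_m<b/2\}$ are $\cF(\cE^+_{mB})$-measurable. The event $Q_m^\times$ involves only edges strictly crossing $mB$. The event $\{m\in\cU\}$ asks that no edge of length $d>B$ strictly crossing $mB$ be forbidden, and each forbidden status is a function of that edge alone. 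So $H_m^\times\in\cF(\cE^\times_{mB})$.

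Independence of the three events is immediate, since the three edge sets are disjoint and the edge variables are independent. The equality $\pr(H_m^-)=\pr(H_m^+)$ follows from reflection symmetry $x\mapsto 2mB-x$, which maps cells to cells and preserves the law of the $Y$'s. For positivity, note that $A_m^+$ is an increasing-type event for the walk. We have $\pr(A_0^+\cap\{Y_0<b/2\})\ge \pr(Y_0<b/2)\,\pr(A_1^+)$ because $Y_0<b/2<a$ together with $A_1^+$ implies $A_0^+$, and these two factors are independent and positive. For $H_m^\times$, the two parts are independent: $Q_m^\times$ uses edges of length at most $B$ and $\cU$ uses longer edges. The bound on $\pr(Q_m^\times)$ was shown after \eqref{eq:portal}. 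The bound $\pr(m\in\cU)\ge 7/8$ follows from the union bound $\sum_{d>B}(d-1)p_d\le c_B\le 1/8$ in \eqref{eq:K-choice}.

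For (ii), the plan is to write $H_{m'}=H_{m'}^-\cap H_{m'}^\times\cap H_{m'}^+$. The factor $H_{m'}^+$ already lies in $\cF(\cE^+_{m'B})\subset\cF(\cE^+_{mB})$. The factor $H_{m'}^-$ involves $A_{m'}^-$, an infinite intersection over $r\ge1$ of constraints that reach to the left of $mB$. On $H_m$ these distant constraints are redundant. Indeed, $A_m^-$ gives $\sum_{q=m-s}^{m-1}Y_q\le as$, so for $r>m'-m$ the constraint $\sum_{q=m'-r}^{m'-1}Y_q\le ar$ follows from the finitely many constraints with $r\le m'-m$. So we replace $A_{m'}^-$ by the finite intersection over $r\le m'-m$, which is $\cF(\cE^+_{mB})$-measurable. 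The factor $H_{m'}^\times$ involves edges crossing $m'B$, some of which also cross $mB$. Those edges have both endpoints outside $[mB,\infty)$ on one side, so they are not in $\cE^+_{mB}$. On $H_m$, such an edge of length $d>B$ is not forbidden, and this is exactly the condition required for $m'B$. Any such edge of length at most $B$ would cross both $mB$ and $m'B$, which is impossible as noted after \eqref{eq:portal}. Hence on $H_m$ the requirement "$m'\in\cU$" reduces to requiring no forbidden edge with both endpoints in $[mB,\infty)$ crossing $m'B$, and this is the event we put into $H_{m'}^{(m)}$. The symmetric statement is proved the same way.

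Part (iii) combines both truncations. For $H_{m''}$ with $m<m''<m'$, we cut $A^-_{m''}$ using $A^-_m$ and cut $A^+_{m''}$ using $A^+_{m'}$. We drop the forbidden-edge conditions for edges leaving $[mB,m'B]$, because such an edge crosses $mB$ or $m'B$ and is therefore non-forbidden on $H_m\cap H_{m'}$. For \eqref{eq:two-hubs}, let $F^{(m,m')}$ be the intersection of $\{Y_m<b/2\}$, $\{Y_{m'-1}<b/2\}$, the truncated $A_m^+$ (over $r\le m'-m$, valid given $A_{m'}^+$) and the truncated $A_{m'}^-$ (over $r\le m'-m$, valid given $A_m^-$).

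The step needing the most care is the edge bookkeeping in $\cE^\times$. One must check that an edge crossing both $mB$ and $m'B$ is accounted for exactly once: it belongs to $H_m^\times$ and is dropped from the other event. One must also check that $H_m^\times$ and $H_{m'}^\times$ in \eqref{eq:two-hubs} are consistent. They share the long edges crossing both boundaries, and the condition imposed on such an edge is identical in both, so the displayed identity is an identity of events, not a product decomposition.
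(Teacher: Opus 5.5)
Your proof is correct and follows the paper's argument essentially step for step. The shared steps are: the same sorting of the ingredients of $H_m$ into $\cE^-_{mB},\cE^\times_{mB},\cE^+_{mB}$ in (i); the same truncation of $A^-_{m'}$ using $A^-_m$ (and symmetrically of $A^+_m$ using $A^+_{m'}$); the same observation that long edges crossing both boundaries are automatically non-forbidden on $H_m$; and the same choice of $F^{(m,m')}$. The only local difference is the positivity of $\pr(H_m^\pm)$. There you use the inclusion $\{Y_0<b/2\}\cap A_1^+\subset A_0^+\cap\{Y_0<b/2\}$ together with independence instead of Harris' inequality, which is equally valid. Your aside calling $A_m^+$ ``increasing-type'' is inaccurate, since it is decreasing in the weights, but it is never used.
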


\begin{proof}
	\begin{enumeratei}
	\item The decomposition is immediate from the definitions, since $Y_q$ is a function of the edges inside the cell $I_q$.
	Independence holds because the three edge sets are disjoint and the edge variables are independent.
	$H_m^-$ is the intersection of two events that are decreasing functions of the edge weights,
		each of positive probability (the first because a random walk with negative drift stays below its starting point forever with positive probability,
		the second by~\eqref{eq:b-s}), so $\pr(H_m^-)>0$ by Harris' inequality for product measures.
	$H_m^\times$ is an intersection of independent conditions on the crossing edges,
		and the union bound with~\eqref{eq:K-choice} gives $\pr(m\in\cU)\ge1-\sum_{d>B}(d-1)p_d\ge7/8$.

	\item Fix $m<m'$ and work on $H_m$.
	On $A_m^-$, for $r>m'-m$,
	\begin{align*}
		\sum_{q=m'-r}^{m'-1}Y_q=\sum_{q=m'-r}^{m-1}Y_q+\sum_{q=m}^{m'-1}Y_q\le a(r-(m'-m))+\sum_{q=m}^{m'-1}Y_q,
	\end{align*}
	so $A_{m'}^-$ is implied by its finite part $\bigcap_{r\le m'-m}\{\sum_{q=m'-r}^{m'-1}Y_q\le ar\}$, which lies in $\cF(\cE^+_{mB})$.
	The events $A_{m'}^+$, $\{Y_{m'-1}<b/2\}$,
		$\{Y_{m'}<b/2\}$ and $Q_{m'}^\times$ lie in $\cF(\cE^+_{mB})$ because the cells $I_{m'-1},I_{m'}$ and every edge of length at most $B$ strictly crossing $m'B$ are contained in $[mB,\infty)$.
	Finally, on $\{m\in\cU\}$ a forbidden edge crossing $m'B$ cannot cross $mB$,
		so $\{m'\in\cU\}$ coincides on $H_m$ with the event that no forbidden edge in $\cE^+_{mB}$ crosses $m'B$.
	The symmetric statement is proved in the same way.

	\item Fix $m<m''<m'$ and work on $H_m\cap H_{m'}$.
	By the computation in (ii), $A^-_{m''}$ is implied by its finite part $\bigcap_{r\le m''-m}\{\sum_{q=m''-r}^{m''-1}Y_q\le ar\}$,
		and symmetrically, on $A^+_{m'}$, for $r>m'-m''$ one has $$\sum_{q=m''}^{m''+r-1}Y_q\le\sum_{q=m''}^{m'-1}Y_q+a(r-(m'-m'')),$$
		so $A^+_{m''}$ is implied by its finite part $\bigcap_{r\le m'-m''}\{\sum_{q=m''}^{m''+r-1}Y_q\le ar\}$.
	Both finite parts, the envelope conditions $Y_{m''-1},Y_{m''}<b/2$ and $Q^\times_{m''}$ only involve edges inside $[mB,m'B]$.
	On $\{m\in\cU\}\cap\{m'\in\cU\}$ a forbidden edge crossing $m''B$ crosses neither $mB$ nor $m'B$,
		hence lies in $[mB,m'B]$, so $\{m''\in\cU\}$ coincides with the event that no forbidden edge inside $[mB,m'B]$ crosses $m''B$.
	This gives $H_{m''}^{(m,m')}$.
	For~\eqref{eq:two-hubs}, write $$
	H_m\cap H_{m'}=H_m^-\cap H_m^\times\cap(H_m^+\cap H_{m'}^-)\cap H_{m'}^\times\cap H_{m'}^+.
	$$
	\end{enumeratei}
	\noindent On $A^+_{m'}$ the event $A_m^+$ is implied by its finite part $\bigcap_{r\le m'-m}\{\sum_{q=m}^{m+r-1}Y_q\le ar\}$,
		and on $A^-_m$ the event $A^-_{m'}$ is implied by its finite part, by the same computation.
	Hence $F^{(m,m')}:=$ (finite part of $A_m^+$) $\cap\{Y_m<b/2\}\cap$ (finite part of $A_{m'}^-$) $\cap\{Y_{m'-1}<b/2\}$, which lies in $\cF(\cE^+_{mB}\cap\cE^-_{m'B})$, satisfies~\eqref{eq:two-hubs}.
\end{proof}

\subsection{Locally good boundaries}
Before treating the hubs we record the renewal structure of $\Hloc=\{m:A_m^-\cap A_m^+\cap Q_m\}$,
	which depends only on the i.i.d.~sequence $(Y_q)$ and the independent i.i.d.~marks $(Q_q^\times)$.
Let $\pr^{\mathrm{loc}}:=\pr(\cdot\mid0\in\Hloc)$ and let $\gL:=\min\{m\ge1:m\in\Hloc\}$ be the gap.

\begin{lem}[Locally good boundaries]\label{lem:loc}
	$\Hloc$ is a stationary renewal set on $\dZ$ of positive intensity.
	Under $\pr^{\mathrm{loc}}$, $\E(\gL^{r})<\infty$ for every $r<(p-2)/2$.
	Consequently, for every such $r$,
	\begin{align}\label{eq:loc-gap}
		\pr\big(\Hloc\cap(0,u]=\eset\big)\le Cu^{1-r},\quad u\ge1.
	\end{align}
	Here $\Hloc$ is indexed by cell numbers.
	The same bound holds, with a different constant,
		for the set $B\Hloc$ of locally good boundaries and intervals of $u$ vertices,
		and we use both forms without further comment.
\end{lem}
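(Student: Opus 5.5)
The plan is to treat $\Hloc$ as a marked version of the set of two-sided ladder epochs of the walk $S$. The renewal structure comes from the cut computation in Lemma~\ref{lem:cut}.(ii). Moment bounds for the gap come from a restart (``failed trial'') argument for the negative-drift walk $S$, whose increments have finite $p$-th moment by Lemma~\ref{lem:amplify}.(ii).

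\emph{Structure and intensity.} Stationarity is immediate, since $(Y_q,Q_q^\times)_{q}$ is an i.i.d.\ sequence and $\Hloc$ is a shift-covariant function of it. For positive intensity, write $\{0\in\Hloc\}=(A_0^-\cap\{Y_{-1}<b/2\})\cap(A_0^+\cap\{Y_0<b/2\})\cap Q_0^\times$. The three events depend on disjoint families of variables, so they are independent. Each has positive probability, by Harris' inequality exactly as in the proof of Lemma~\ref{lem:cut}.(i).

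For the renewal property, work on $\{m\in\Hloc\}$ and repeat the computation in Lemma~\ref{lem:cut}.(ii). For $m'>m$, the event $A_{m'}^-$ reduces to its finite part, which involves only $Y_m,\dots,Y_{m'-1}$. Hence the next point of $\Hloc$ after $m$, and all later points, are functions of $(Y_q,Q^\times_q)_{q\ge m}$. The conditioning on the past enters only through $A_m^-\cap\{Y_{m-1}<b/2\}$, which is independent of that future. Iterating this at successive points gives i.i.d.\ gaps under $\pr^{\mathrm{loc}}$, so $\Hloc$ is a stationary renewal set.

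\emph{Gap moments.} Under $\pr^{\mathrm{loc}}$ the past conditioning is irrelevant. Moreover, $\pr^{\mathrm{loc}}(\gL>u)\le \pr(\gL'>u)/\pr(0\in\Hloc)$, where $\gL'$ is the first $m\ge1$ with the following property: $S_m\le\min_{0\le k<m}S_k$, together with $A_m^+$ and the marks. Indeed, on $A_0^-$, a new running minimum after $0$ is automatically a minimum over the whole past. I will bound $\gL'$ by a trial scheme.
\begin{enumeratei}
\item Start at the first strict descending ladder epoch $\tau_1$ of $S$ after time $0$, and test the event that the marks hold and that $S$ never returns above $S_{\tau_1}$.
\item The marks are independent of everything else, and by the strong Markov property each trial succeeds with probability at least $c>0$.
\item If a trial fails, either a mark fails (cost $O(1)$ time), or the walk exceeds $S_{\tau_1}$ at a finite time $\sigma$. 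In the latter case, restart at the first new running minimum after $\sigma$.
\end{enumeratei}
So $\gL'\le\sum_{j\le N}R_j$, where $N$ is geometric and the cycle lengths $R_j$ are i.i.d.\ under the strong Markov property.

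Each cycle length is a sum of three pieces:
\begin{enumeratei}
\item a descending ladder epoch, which has moments of all orders below $p$ because the drift is negative (Gut-type results);
\item a finite return time $\sigma-\tau$, for which, for a walk with drift $-\E Y_0$ and $p$-th moments, I expect $\pr(t<\sigma-\tau<\infty)\le Ct^{1-p}$ via the single-big-jump bound;
\item the time to come back below the previous minimum, which is of order (overshoot)$/\E Y_0$ plus a ladder epoch; the overshoot tail is $\lesssim x^{1-p}$.
\end{enumeratei}
Each piece should therefore have moments of order $r<p-2$. Then $\E\big(\sum_{j\le N}R_j\big)^r<\infty$, since $N$ has geometric tails (Minkowski's inequality plus $\E N^{r+1}<\infty$). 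The stated range $r<(p-2)/2$ leaves ample slack for the crude estimates.

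\emph{Tail bound.} For a stationary renewal set with Palm gap $\gL$ and intensity $\pi=1/\E^{\mathrm{loc}}\gL$,
\[
\pr\big(\Hloc\cap(0,u]=\eset\big)=\pi\,\E^{\mathrm{loc}}(\gL-u)_+\le\pi\,\E^{\mathrm{loc}}\big(\gL^r\big)u^{1-r}.
\]
This gives~\eqref{eq:loc-gap}. The vertex version follows by replacing $u$ with $u/B$.

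The main obstacle is the cycle estimate in the gap-moment step. The event $A_m^+$ depends on the whole future, so trials are not independent a priori. The proof needs the strong Markov restart at the failure time $\sigma$, together with polynomial tail bounds for a finite excursion above the running minimum and for the subsequent recovery of a negative-drift walk with only $p$ moments. I would first try the bound $\pr(t<\sigma<\infty)\le\sum_{k>t}\pr(Y_k-a>c k)\le Ct^{1-p}$, after a truncation of the increments.
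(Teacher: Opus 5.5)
Your renewal and positivity arguments are the paper's (cut factorization and localization as in Lemma~\ref{lem:cut}). Your final step is the stationary--Palm identity~\eqref{eq:palm-stat}, used in the direction from moments to vacancy.

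The real difference is the gap-moment step. The paper goes the other way and never uses ladder epochs. It truncates both linear-control conditions to $1\le r\le h$. The truncated events $J_m^{(h)}$ are then independent at sites $2h+1$ apart, and each has probability at least $\pr(0\in\Hloc)$. Nagaev's bound $\pr(\sum_{q=1}^rY_q>ar)\le Cr^{1-p}$ controls a false positive by $Ch^{2-p}$. Testing $k\asymp u/h$ separated sites with $h=\floor{u^\kappa}$ gives $\pr(\Hloc\cap(0,u]=\eset)\le Ce^{-cu/h}+Cuh^{1-p}=O(u^{1-r'})$ directly, for any $r'<p-1$. The Palm moments then follow.

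That route needs one large-deviation estimate and no Markov structure of $S$. It also produces the vacancy bound in exactly the form used later in Lemmas~\ref{lem:second-moment} and~\ref{lem:L-tail}. Your ladder-epoch restart scheme also works and gives a more explicit regenerative picture of $\Hloc$. But it needs more random-walk input (ladder-epoch moments, return-time and overshoot tails), and the sketch needs three repairs.

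(i) Only $Q_m^\times$ is an independent mark; the envelope conditions $Y_{m-1},Y_m<b/2$ are increments of $S$. Use Harris' inequality for $\{Y_\tau<b/2\}\cap A_\tau^+$ given $\mathcal F_\tau$. For the step into a new minimum from height $h\ge0$ above it, the increment has conditional law $\pr(Y_0\in\cdot\mid Y_0<a-h)$, and $\pr(Y_0<b/2\mid Y_0<a-h)\ge\pr(Y_0<b/2)$. This gives a conditional lower bound on each trial's success probability, hence geometric domination, rather than literally i.i.d.\ cycles.

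(ii) Test the local conditions before $A_\tau^+$, and after a local failure restart at the first new minimum after time $\tau+1$, which is a stopping time. If you branch after learning that $A_\tau^+$ holds, the restarted walk is conditioned on a future event and the trials are no longer independent.

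(iii) The distance to recover after $\sigma$ is the overshoot plus the depth $S_\tau-\min_{\tau\le k\le\sigma}S_k$. The depth is at most $a(\sigma-\tau)$, because the increments are at least $-a$.

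Finally, you do not need a single-big-jump estimate. The same Nagaev bound gives $\pr(t<\sigma-\tau<\infty)\le\sum_{k>t}\pr(S_k>0)\le Ct^{2-p}$. The overshoot is at most the last increment, so its tail is $O(x^{2-p})$. This yields cycle moments of every order $r<p-2$, which covers $r<(p-2)/2$.
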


\begin{proof}
	The cut factorization and localization of Lemma~\ref{lem:cut}, with the long-edge condition omitted,
		imply the renewal property by the argument of Proposition~\ref{prop:regeneration}.(i).
	Positivity follows from Lemma~\ref{lem:cut}.(i).
	This part does not use any gap moment bound.

	For an integer $h\ge1$, let $J_m^{(h)}$ be the event obtained from $\{m\in\Hloc\}$ by imposing the two linear-control inequalities only for $1\le r\le h$,
		retaining the entire portal condition.
	Define $\pi_{\rm loc}:=\pr(0\in\Hloc)>0$.
	Then $\pr(J_m^{(h)})\ge\pi_{\rm loc}$.
	Since $a=2\E Y_0$ and $\E Y_0^p<\infty$,
		the polynomial large-deviation bound for sums of independent variables~\cite{Nag79} gives
	\begin{align*}
		\pr\Big(\sum_{q=1}^rY_q>ar\Big)\le C r^{1-p},\quad r\ge1.
	\end{align*}
	A union bound over the omitted inequalities therefore yields
	\begin{align*}
		\pr\big(J_m^{(h)}\setminus\{m\in\Hloc\}\big)\le C\sum_{r>h}r^{1-p}\le Ch^{2-p}.
	\end{align*}

	The event $J_m^{(h)}$ depends only on $Y_{m-h},\ldots,Y_{m+h-1}$ and the independent mark $Q_m^\times$.
	Choose $k\asymp u/h$ deterministic trial points in $(0,u]$ at mutual distances at least $2h+1$.
	Their truncated events are independent.
	If none is locally good, either all truncated events fail or at least one is a false positive.
	Consequently
	\begin{align*}
		\pr\big(\Hloc\cap(0,u]=\eset\big)
		\le (1-\pi_{\rm loc})^k+Ckh^{2-p}
		\le C\exp(-cu/h)+Cu h^{1-p}.
	\end{align*}

	For any $1<r<p-1$, choose $r<r'<p-1$ and $\kappa\in(r'/(p-1),1)$, and take $h=\floor{u^\kappa}$.
	The last display is $O(u^{1-r'})$.
	The stationary--Palm identity for the cell-index renewal process now gives $\E^{\rm loc}\gL^r<\infty$:
		for $r>1$, finiteness is equivalent to summability of $u^{r-2}\pr(\Hloc\cap(0,u]=\eset)$.
	Moments of order at most one follow from the finite Palm mean.
	This proves, in particular, all the claimed moments $r<(p-2)/2$ and~\eqref{eq:loc-gap}.
\end{proof}

\subsection{Regeneration}
Let $U$ be uniform on $\{0,\ldots,B-1\}$ and independent of the edge environment.
Repeat the preceding cell construction on the translated grid $U+B\dZ$ (equivalently,
	replace each $I_m$ by $[U+mB,U+(m+1)B]\cap\dZ$).
This harmless augmentation makes the hub process stationary under every integer translation, not only translations by $B$.

Let $\cH:=\{U+mB:H_m\}$ be the hub set.
It is a stationary ergodic point process on $\dZ$.
Its intensity is $\pr(H_0)/B>0$ by Lemma~\ref{lem:cut}.(i),
	so a.s.~$\cH$ is unbounded in both directions, and we may enumerate it as $\cdots<\hub_{-1}<\hub_0\le0<\hub_1<\cdots$.
The Palm law $\pr^0$ is the law conditioned on a hub at the origin. Equivalently,
	condition on $\{U=0\}$ and $H_0$, and then use the origin as the distinguished hub.
Under $\pr^0$ we write $0=\hub_0<\hub_1<\hub_2<\cdots$ for the hubs in $[0,\infty)$,
	$L_i:=\hub_{i+1}-\hub_i$ and $G_i:=T(\hub_i,\hub_{i+1})$,
	and we let $\cB_i:=(\go_e:e\subset[\hub_i,\hub_{i+1}])$ be the block environment.

\begin{prop}[Regeneration]\label{prop:regeneration}
	Assume Assumption~\ref{ass:law} and $\ga>1+2/\theta$.
	\begin{enumeratei}
		\item Under $\pr^0$ the marked blocks $(L_i,\cB_i)_{i\ge0}$ are i.i.d.,
				and $(L_i,\cB_i)_{i\ge0}$ is independent of $\cF(\cE_0^-)$.
			The stationary hub process is a stationary renewal process whose gap law is that of $L:=L_0$ under $\pr^0$, with
		\begin{align}\label{eq:EL}
			\E^0L={B}/{\pr(H_0)}<\infty.
		\end{align}
		In particular, in the stationary enumeration the pairs $(L_i,G_i)$, $i\ne0$,
			are i.i.d.~with the $\pr^0$-law of $(L_0,G_0)$.
		\item $G_i$ is a function of $\cB_i$, and
		\begin{align}\label{eq:G-bound}
			0\le G_i\le C_0L_i.
		\end{align}
		In particular $\E^0G<\infty$.
		More generally, for every hub $x$ and $u<x<v$, $T(u,x)\le C_0(x-u+B)$ and $T(x,v)\le C_0(v-x+B)$.
		\item If $\ga>1+3/\theta$, then $\E^0(L^2)<\infty$ and hence $\E^0(G^2)<\infty$.
	\end{enumeratei}
\end{prop}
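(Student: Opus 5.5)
For part (i), I would use the cut structure of Lemma~\ref{lem:cut}. Work under $\pr^0$, i.e.\ condition on $U=0$ and $H_0=H_0^-\cap H_0^\times\cap H_0^+$. By Lemma~\ref{lem:cut}.(i) these three factors lie in $\cF(\cE_0^-)$, $\cF(\cE_0^\times)$ and $\cF(\cE_0^+)$, which are independent. So conditioning on $H_0$ keeps the three edge families independent, with each restricted to its own factor.

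Next I would identify $\hub_1$. By Lemma~\ref{lem:cut}.(ii), on $H_0$ the event $\{mB\text{ is a hub}\}$ coincides with an event $H_m^{(0)}\in\cF(\cE_0^+)$. Hence $\hub_1=B\min\{m\ge1:H_m^{(0)}\}$ is measurable with respect to $\cF(\cE_0^+)$, and so is $\cB_0$; both are therefore independent of $\cF(\cE_0^-)$.

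For the i.i.d.\ property, decompose on $\{\hub_1=mB\}$. By \eqref{eq:two-hubs} and the two-sided localization of Lemma~\ref{lem:cut}.(iii),
\[
\{\hub_1=mB\}\cap H_0=H_0^-\cap H_0^\times\cap\Big(F^{(0,m)}\cap\bigcap_{0<m''<m}(H_{m''}^{(0,m)})^c\Big)\cap H_m^\times\cap H_m^+ .
\]
The bracketed event lies in $\cF(\cE_0^+\cap\cE_{mB}^-)$, and the last two factors involve edges crossing or to the right of $mB$. Conditionally on $\{\hub_1=mB\}\cap H_0$, the environment to the right of $mB$ therefore has exactly the law of the environment under $\pr^0$ shifted by $mB$. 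The $\cE_{mB}^\times$ edges deserve care: under the conditioning they carry exactly $H_m^\times$ intersected with the requirement that crossing edges are not forbidden. But $H_m^\times$ already contains $\{m\in\cU\}$, and the long crossing edges also cross $0$, so $H_0^\times$ constrains them too. This overlap is the step I expect to be delicate. I would handle it by splitting the crossing-edge sets into edges crossing only $mB$, only $0$, or both, and then checking that the "both" edges carry the product constraint "not forbidden", which factorizes edge by edge. Once that is in place, the shifted environment is independent of the block variables, the argument iterates, and this gives i.i.d.\ blocks.

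Stationarity plus the Palm--renewal correspondence (Kac's formula) then gives $\E^0L=1/(\text{intensity})=B/\pr(H_0)$. The same mechanism shows that in the stationary enumeration the blocks $i\ne0$ are i.i.d.\ with the Palm law.

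For part (ii), measurability of $G_i$ in $\cB_i$ follows from Lemma~\ref{lem:pivotal}.(ii): geodesics between consecutive hubs stay in the block, because any excursion outside it can be strictly shortened. For the bounds, at a hub $x=mB$ the linear-control event $A_m^+$ together with \eqref{eq:linear-local} gives $T(x,v)\le\Dl(x,v)\le C_0(v-x+B)$, and symmetrically on the left. For \eqref{eq:G-bound} with both endpoints being cell boundaries, Lemma~\ref{lem:amplify}.(iii) uses exactly $L_i/B$ cells with envelope sum at most $a L_i/B=C_0L_i$, so there is no $+B$ term.

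For part (iii), I would bound $\pr^0(L>u)$. The block is long only if there are no hubs in $(0,u]$. Either there is no locally good boundary in a stretch, which is controlled by \eqref{eq:loc-gap} using $p>2+2/\delta_G$, or every locally good boundary is covered by a forbidden edge. Since $\Hloc$ and $\cU$ are independent, I would bound the second case by saying that covering $\asymp u^{\kappa_G}$-spaced good boundaries over a length-$u$ stretch requires either a forbidden edge of length at least $u^{\kappa_G}$ near the stretch (probability at most $Cu\sum_{d\ge u^{\kappa_G}}p_d$) or many independent coverage failures (each of probability at most $1/8$, hence geometrically small). Using Lemma~\ref{lem:summability}.(ii) with the chosen $\kappa_G$ should give $\pr(\text{no hub in }(0,u])\le Cu^{-1-\delta_G}$. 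By the stationary--Palm identity this yields $\E^0L^2<\infty$, and then $\E^0G^2\le C_0^2\E^0L^2$. I expect this tail estimate to be the main technical obstacle, since coverings by chains of overlapping medium-length forbidden edges must be ruled out.
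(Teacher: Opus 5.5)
Your proposal is correct and follows essentially the same route as the paper. The ingredients match: the factorization $\pr^0=\pr(\cdot\mid H_0^-)\otimes\pr(\cdot\mid H_0^\times)\otimes\pr(\cdot\mid H_0^+)$ with localization; the decomposition of $H_0\cap\{\hub_1=mB\}$, in which the overlapping crossing constraints form one factor on edges disjoint from the block and from $\cE^+_{mB}$ (your edge-by-edge treatment of the doubly crossing edges is a harmless refinement); Kac's formula for $\E^0L$; the envelope bound for (ii); and, for (iii), the spacing argument of Lemma~\ref{lem:second-moment}.

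Two small points on (iii). First, the estimate you need is on the stationary probability $\pr(\cH\cap(0,u]=\eset)$, not on $\pr^0(L>u)$, since a $u^{-1-\delta}$ Palm tail would only give a moment of order $1+\delta$; your final step does use the stationary version, as it should. Second, no chains of medium-length edges need to be ruled out. Either some selected point is crossed by a forbidden edge of length at least $D$, or all of them are crossed by edges of length less than $D$, and these latter events are independent across the $D$-separated points; this dichotomy already covers every configuration.
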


\begin{proof}
	(i) By Lemma~\ref{lem:cut}.(i), $\pr^0=\pr(\cdot\mid H_0^-)\otimes\pr(\cdot\mid H_0^\times)\otimes\pr(\cdot\mid H_0^+)$ on $\cF(\cE_0^-)\otimes\cF(\cE_0^\times)\otimes\cF(\cE_0^+)$.
	By Lemma~\ref{lem:cut}.(ii), on $H_0$ the hubs in $(0,\infty)$ are determined by the edges in $\cE_0^+$.
	Hence $(L_i,\cB_i)_{i\ge0}$ is a function of the edges in $\cE_0^+$ and is independent of $\cF(\cE^-_0)$.

	Fix $m'\ge1$ and consider the event $H_0\cap\{\hub_1=m'B\}=H_0\cap H_{m'}\cap\{H_{m''}\text{ fails for }0<m''<m'\}$.
	By Lemma~\ref{lem:cut}.(iii), on $H_0\cap H_{m'}$ each $H_{m''}$ with $0<m''<m'$ coincides with an event in $\cF(\cE_0^+\cap\cE^-_{m'B})$,
		and $H_0\cap H_{m'}=H_0^-\cap H_0^\times\cap F^{(0,m')}\cap H_{m'}^\times\cap H_{m'}^+$ with $F^{(0,m')}\in\cF(\cE_0^+\cap\cE^-_{m'B})$.
	Hence
	\begin{align*}
		H_0\cap\{\hub_1=m'B\}=H_0^-\cap H_0^\times\cap F_{m'}\cap H_{m'}^\times\cap H_{m'}^+,\quad F_{m'}\in\cF\big(\cE_0^+\cap\cE^-_{m'B}\big),
	\end{align*}
	where $F_{m'}$ is the intersection of $F^{(0,m')}$ with the complements of the localized intermediate hub events.
	The block environment $\cB_0$ consists of the edges in $\cE_0^+\cap\cE^-_{m'B}$,
		and the edges in $\cE^+_{m'B}$ determine the later blocks.
	Let $A\in\cF(\cE_0^+\cap\cE^-_{m'B})$ and $A'\in\cF(\cE^+_{m'B})$,
		and write $\gt_{m'B}A'$ for the translate of $A'$ by $m'B$.

	The crossing sets $\cE_0^\times$ and $\cE_{m'B}^\times$ are \emph{not} disjoint:
		an edge with one endpoint left of $0$ and the other right of $m'B$ belongs to both.
	Their union is, however, disjoint from $\cE_0^-$, $\cE_0^+\cap\cE^-_{m'B}$ and $\cE^+_{m'B}$.
	Thus the two crossing conditions must be kept as one joint factor.
	Independence of the remaining edge families and translation invariance give
	\begin{align*}
		\pr\big(H_0,\hub_1=m'B,A,\gt_{m'B}A'\big)&=\pr(H_0^-)\,\pr(F_{m'}\cap A)\,\pr(H_0^\times\cap H_{m'}^\times)\,\pr(H_{m'}^+\cap\gt_{m'B}A')\\
		&=\pr\big(H_0,\hub_1=m'B,A\big)\,\frac{\pr(H_0^+\cap A')}{\pr(H_0^+)}.
	\end{align*}

	Dividing by $\pr(H_0)$ shows that under $\pr^0$,
		conditionally on $\hub_1$ and on the block $(L_0,\cB_0)$, the environment to the right of $\hub_1$,
		translated to the origin, has the law $\pr(\cdot\mid H_0^+)$ on $\cF(\cE_0^+)$,
		which is also the $\pr^0$-law of the environment in $\cE_0^+$.
	In particular it is independent of $(L_0,\cB_0)$, and $(L_1,\cB_1)$ has the $\pr^0$-law of $(L_0,\cB_0)$.
	Iterating gives the i.i.d.~property.

	A stationary point process whose Palm gaps are i.i.d.~is a stationary renewal process~\cite{Thorisson00},
		and its intensity is the reciprocal of the Palm mean gap, which gives~\eqref{eq:EL}.
	The last claim in (i) is the standard description of a stationary renewal process:
		the gaps not containing $0$ form two independent i.i.d.~sequences with the Palm gap law,
		independent of the gap containing $0$.

	(ii) By Lemma~\ref{lem:pivotal}.\ref{pivot:ii} a geodesic from $\hub_i$ to $\hub_{i+1}$ stays in $[\hub_i,\hub_{i+1}]$,
		so $G_i$ is a function of $\cB_i$.
	Since $\hub_i$ and $\hub_{i+1}$ are cell boundaries,
		the linear-control event $A^+$ at $\hub_i$ and Lemma~\ref{lem:amplify}.(iii) give $G_i\le\Dl(\hub_i,\hub_{i+1})\le a\,L_i/B=C_0L_i$.
	The general statement is~\eqref{eq:linear-local}.

	(iii) is Lemma~\ref{lem:second-moment} below, together with~\eqref{eq:G-bound}.
\end{proof}

\subsection{Second moment of the block length}
The stationary--Palm relation for a renewal process on $\dZ$ reads
\begin{align}\label{eq:palm-stat}
	\pr(\hub_1\ge v)=\pr\big(\cH\cap(0,v)=\eset\big)=\frac{\E^0\big((L-v+1)_+\big)}{\E^0L},\quad v\ge1,
\end{align}
so that $\E^0(L^2)<\infty$ if and only if $\sum_{u\ge1}\pr(\cH\cap(0,u]=\eset)<\infty$.
Formula~\eqref{eq:palm-stat} is exact for every integer $v\ge1$.
The next lemma is the only place where $\sum_dd^2p_d<\infty$ is used.

\begin{lem}[Gaussian regime]\label{lem:second-moment}
	If $\ga>1+3/\theta$, there is $\gd>0$ such that $\pr(\cH\cap(0,u]=\eset)\le Cu^{-1-\gd}$ for all $u\ge1$.
	Consequently $\E^0(L^2)<\infty$.
\end{lem}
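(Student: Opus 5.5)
We have to bound the probability that no hub lies in $(0,u]$. Since hubs are $\Hloc\cap\cU$, there are only two ways this can happen: locally good boundaries are scarce, or the uncovered set misses all of them. We control each separately. The key tool is the independence of $\Hloc$ (short edges) and $\cU$ (long edges).

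Fix $\kappa\in(0,1)$, to be tuned, and split according to whether $\Hloc$ has at least $k:=\lfloor u^{1-\kappa}\rfloor$ well-separated points in $(0,u]$. Partition $(0,u]$ into $k$ consecutive intervals of length $\asymp u^{\kappa}$. By \eqref{eq:loc-gap} and a union bound, the probability that some interval contains no locally good boundary is at most
\[
Cu^{1-\kappa}\,u^{\kappa(1-r)},
\]
where $r<(p-2)/2$ may be taken large since $p>2+2/\gd_G$. Call the complement of this event $\mathcal{G}$. On $\mathcal{G}$, pick one locally good boundary $x_j$ from every second interval. This gives points $x_1<\dots<x_{k/2}$ with mutual spacing at least $u^\kappa$, all measurable with respect to the short edges.

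Condition on the short edges. The field $\cD$ is then still an independent Bernoulli field with parameters $p_d$. It remains to bound $\pr(\text{every } x_j \text{ is crossed by a forbidden edge})$. Each $x_j$ is covered either by a forbidden edge of length $<u^\kappa/2$, or by one of length $\ge u^\kappa/2$.

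\emph{Long edges.} The expected number of forbidden edges of length $\ge u^\kappa/2$ crossing any point of $(0,u]$ is at most
\[
\sum_{d\ge u^\kappa/2}(u+d)\,p_d\le Cu\,u^{-\kappa(2+\gd)}+Cu^{-\kappa(1+\gd)},
\]
using Lemma~\ref{lem:summability}.(ii) (with $\sum_{d\ge r}p_d\le Cr^{-2-\gd}$, obtained the same way).

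\emph{Short edges.} If no long forbidden edge touches $(0,u]$, then each $x_j$ is covered by a short forbidden edge. Short edges covering distinct $x_j$ are distinct, and those covering $x_j$ and $x_{j'}$ with $j'\ge j+2$ are disjoint, since the spacing is $\ge u^\kappa$. Hence the covering events for the odd-indexed $x_j$ are independent, each with probability $\le c_B\le 1/8$ by \eqref{eq:K-choice}. This part is therefore bounded by $8^{-k/4}$, which decays stretched-exponentially.

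Collecting the three terms:
\[
\pr\big(\cH\cap(0,u]=\eset\big)\le Cu^{1-\kappa r}+Cu^{1-\kappa(2+\gd)}+Cu^{-\kappa(1+\gd)}+e^{-cu^{1-\kappa}}.
\]

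\textbf{Main obstacle: the choice of exponents.} The difficulty is that the term $u^{1-\kappa(2+\gd)}$ requires $\kappa>2/(2+\gd)$, i.e.\ $\kappa$ close to $1$. This is presumably why $\kappa_G$ was defined as it was, with $\delta_F$ playing the role of $\gd$. The term $u^{1-\kappa r}$ then requires $r\kappa>2$, which is guaranteed by $p>2+2/\gd_G$ through the definition of $\gd_G$. With $\kappa=\kappa_G$, all four terms are $O(u^{-1-\gd'})$ for some $\gd'>0$, which is the claimed bound.

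The statement $\E^0(L^2)<\infty$ then follows from \eqref{eq:palm-stat}, because
\[
\sum_{u\ge1}\pr\big(\cH\cap(0,u]=\eset\big)<\infty.
\]
Finally, a minor point needs care: the random shift $U$ and the conversion between cell indices and vertices only change constants.
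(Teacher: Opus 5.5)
Your proposal is correct and follows essentially the same route as the paper: density of $\Hloc$ via \eqref{eq:loc-gap} on intervals of length $u^{\kappa}$, a well-separated selection of locally good points, a union bound for long forbidden edges, and independence of the short-edge coverings giving a stretched-exponential term, all with $\kappa=\kappa_G$. The differences are cosmetic. You take the long-edge union bound over all of $(0,u]$ rather than over the selected points, which gives the same exponent $1-\kappa(2+\gd)$ plus the harmless term $u^{-\kappa(1+\gd)}$. You discard half the selected points, which is unnecessary since an edge shorter than the spacing crosses at most one of them. The paper selects its points greedily and simply takes $r=3$, which suffices because $\kappa_G>3/4$ and $p>10$.
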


\begin{proof}
	Let $\gd>0$ be as in Lemma~\ref{lem:summability}.(ii), so that $\sum_{d\ge D}d\,p_d\le CD^{-1-\gd}$.
	Take $\gd=\delta_F$, $\kappa=\kappa_G$ and $r=3$, with the choices in Section~\ref{ssec:amplify}. Then $\kappa r>2$, $\kappa(2+\gd)>2$ and $r<(p-2)/2$.
	Define $D:=\floor{u^\kappa}$.
	Let $E_u$ be the event that every interval of length $D$ contained in $(0,u]$ contains a point of $\Hloc$.
	By stationarity, a union bound over a deterministic grid of intervals of length $D/2$ and~\eqref{eq:loc-gap},
	\begin{align*}
		\pr(E_u^c)\le(u/D+1)\,CD^{1-r}\le Cu^{1-\kappa r}\le Cu^{-1-\gd'}
	\end{align*}
	for some $\gd'>0$, since $\kappa r>2$.

	On $E_u$ choose points $m_1<\cdots<m_{K'}$ of $\Hloc\cap(0,u]$ greedily:
		$m_1$ is the first point of $\Hloc$ in $(0,D]$ and $m_{i+1}$ the first point of $\Hloc$ in $(m_i+D,m_i+2D]$, as long as $m_i+2D\le u$.
	Then consecutive points are at distance more than $D$ apart and $K'\ge u/(2D)-1\ge cu^{1-\kappa}$.
	The points are functions of $\Hloc$, which is independent of the forbidden field.
	If no hub lies in $(0,u]$, then every $m_i$ is strictly crossed by a forbidden edge.
	Either some $m_i$ is crossed by a forbidden edge of length at least $D$,
		an event of probability at most $K'\sum_{d\ge D}(d-1)p_d\le CK' D^{-1-\gd}$,
		or every $m_i$ is crossed by a forbidden edge of length less than $D$.
	The latter events are independent over $i$,
		because a forbidden edge of length less than $D$ crosses at most one of the points,
		and each has probability at most $c_B\le1/8$ by~\eqref{eq:K-choice}.
	Hence
	\begin{align*}
		\pr\big(\cH\cap(0,u]=\eset\big)\le\pr(E_u^c)+Cu^{1-\kappa}u^{-\kappa(1+\gd)}+8^{-cu^{1-\kappa}}\le Cu^{-1-\gd''},
	\end{align*}
	with $\gd^{\prime\prime}=\delta_G>0$, since both polynomial exponents exceed $1+2\delta_G$.
	The last claim follows from~\eqref{eq:palm-stat} and $\sum_v\E^0((L-v+1)_+)=\E^0(L(L+1))/2$.
\end{proof}

\begin{rem}\label{rem:stable-tail-forward}
	In the stable regime $1+2/\theta<\ga<1+3/\theta$ the same proof gives $\pr(\cH\cap(0,u]=\eset)\le Cu^{1-\kappa\gb}$ for any $\kappa<1$,
		which only shows $\E^0(L^q)<\infty$ for $q<\gb$.
	The sharper upper bound on the scale $u^{-\gb}\slv(u^{1-\ga})$ is proved using the first-big-edge decomposition in Lemma~\ref{lem:L-tail}. That lemma also supplies a matching lower bound up to constants for the original Palm gap.
\end{rem}

\subsection{Law of large numbers in the linear-growth regime}
\begin{prop}[Law of large numbers]\label{prop:lln}
	Suppose Assumption~\ref{ass:law} holds and $\ga>1+2/\theta$.
	Then $\mu:=\E^0G/\E^0L\in(0,\infty)$, $T_n/n\to\mu$ a.s.~and in $L^1$, and $\E T_n/n\to\mu$.
\end{prop}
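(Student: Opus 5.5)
The plan is to combine the hub decomposition of Lemma~\ref{lem:pivotal}.\ref{pivot:iii} with the renewal-reward strong law, using Proposition~\ref{prop:regeneration}.

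\emph{Finiteness and positivity of $\mu$.} By \eqref{eq:EL}, $\E^0L<\infty$, and by \eqref{eq:G-bound}, $\E^0G\le C_0\E^0L<\infty$. For $\mu>0$ it suffices that $\pr^0(G>0)>0$. This holds because $G\ge0$ and $\go>0$ a.s., so $G>0$ whenever $L\ge1$, which is always the case. Hence $\mu\in(0,C_0]$.

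\emph{Almost sure convergence.} Write $\Nn=\max\{i:\hub_i\le n\}$. On the event $\hub_1\le n$, Lemma~\ref{lem:pivotal}.\ref{pivot:iii} gives
\[
T_n=T(0,\hub_1)+\sum_{i=1}^{\Nn-1}G_i+T(\hub_{\Nn},n).
\]
By Proposition~\ref{prop:regeneration}.(ii), the boundary terms are bounded by $C_0(\hub_1+B)$ and $C_0(n-\hub_{\Nn}+B)$.
\begin{itemize}
\item The first is a fixed finite random variable, so divided by $n$ it tends to $0$.
\item For the second, $n-\hub_{\Nn}\le L_{\Nn}$. Since $\E^0L<\infty$, the i.i.d.\ gaps satisfy $L_k/k\to0$ a.s.; together with $\Nn\le n$ this gives $L_{\Nn}/n\to0$.
\end{itemize}
The strong law applied to the i.i.d.\ pairs $(L_i,G_i)_{i\ge1}$ gives $\hub_k/k\to\E^0L$ and $\sum_{i<k}G_i/k\to\E^0G$. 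Hence $\Nn/n\to1/\E^0L$ and $T_n/n\to\mu$ a.s.

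\emph{$L^1$ convergence and $\E T_n/n\to\mu$.} I would use a uniform integrability argument via domination. By Proposition~\ref{prop:regeneration}.(ii) applied with the hubs $\hub_0\le0$ and $\hub_{\Nn+1}>n$,
\[
T_n\le T(0,\hub_{\Nn+1})\le C_0(\hub_{\Nn+1}+B)\le C_0(n+L_{\Nn}+B).
\]
Here one uses that some path through the hubs achieves this, or directly $T_n\le\Dl(0,n)$ bounded by $C_0(n+\hub_1-\hub_0+\dots)$. So it suffices that $\{L_{\Nn}/n\}$ is uniformly integrable, together with the integrability of $\hub_1$. The term $\hub_1$ may fail to be integrable, since under the stationary law it is the size-biased residual, which needs $\E^0L^2<\infty$.

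This is the main obstacle. To avoid it, I would bound $T_n$ more cleverly. Using the locally good boundaries $\Hloc$ (Lemma~\ref{lem:loc}, with moments of every order $r<(p-2)/2$ and $p\ge10$), the envelope walk gives
\[
T_n\le\Dl(0,n)\le\sum_{q}Y_q
\]
over the $O(n/B+2)$ cells meeting $[0,n]$. This is a sum of i.i.d.\ variables with finite $p$-th moment (Lemma~\ref{lem:amplify}). Dividing by $n$, this family converges in $L^1$ by the ordinary strong law plus Scheffé's lemma, hence is uniformly integrable. Since $0\le T_n/n$ is dominated by a uniformly integrable family and converges a.s., Vitali's theorem yields $L^1$ convergence and $\E T_n/n\to\mu$. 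This bypasses any moment requirement on the hub gaps beyond the first.
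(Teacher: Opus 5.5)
Your proof is correct and follows the paper's argument. You use the same hub decomposition \eqref{eq:decomp}, the same residual bounds from Proposition~\ref{prop:regeneration}.(ii) together with the renewal-reward strong law, and the same envelope domination $0\le T_n\le W_n=\sum_{m=-1}^{\ceil{n/B}}Y_m$ for uniform integrability. The paper gets uniform integrability from $\E W_n^p\le C(n+1)^p$, whereas your strong law plus Scheff\'e route needs only $\E Y_0<\infty$, so your mentions of $\Hloc$ and of $p$-th moments are superfluous. Your discarded first bound $T_n\le T(0,\hub_{\Nn+1})$ is not valid as written, since one needs an extra $+T(n,\hub_{\Nn+1})$, but you abandon that route, so it does no harm.
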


\begin{proof}
	By Proposition~\ref{prop:regeneration}, $0<\E^0L<\infty$ and $\E^0G\le C_0\E^0L<\infty$.
	Positivity of $\E^0G$ follows from $\pr(\go>0)=1$ and the fact that $G$ is the sum of the edge times of a path with at least one edge.
	Let $\Nn=\max\{i:\hub_i\le n\}$ in the stationary enumeration.
	By~\eqref{eq:additivity-hubs}, on $\{\hub_1\le n\}$,
	\begin{align}\label{eq:decomp}
		T_n=T(0,\hub_1)+\sum_{i=1}^{\Nn-1}G_i+T(\hub_{\Nn},n).
	\end{align}
	The renewal strong law gives $\Nn/n\to1/\E^0L$ a.s.,
	and the strong law for the i.i.d.~rewards gives $\frac1n\sum_{i<\Nn}G_i\to\E^0G/\E^0L$ a.s.
	By Proposition~\ref{prop:regeneration}.(ii) the two residual terms are bounded by $C_0(\hub_1+B)$ and $C_0(n-\hub_{\Nn}+B)$.
	The first is a.s.~finite.
	The second is at most $C_0(\max_{1\le i\le\Nn}L_i+B)=o(n)$ a.s., because $\E^0L<\infty$ and $\Nn=O(n)$.
	This proves a.s.~convergence.
	For $L^1$ convergence on the randomly shifted grid, we use $0\le T_n\le\Dl(0,n)\le W_n:=\sum_{m=-1}^{\ceil{n/B}}Y_m$. These cells cover $[0,n]$ for every grid phase. Conditional on the phase their envelopes are i.i.d., and $\E W_n^p\le C(n+1)^p$, so $(T_n/n)_{n\ge1}$ is uniformly integrable.
	Hence $\E T_n/n\to\mu$.
\end{proof}

The construction proves positivity when $\ga>1+2/\theta$.
The next proposition directly proves the complementary zero-speed assertion under the same edge-law assumptions.
\begin{prop}[Zero speed below the linear-growth threshold]\label{prop:zero-speed}
Assume Assumption~\ref{ass:law}, and suppose that $0<\ga<1+2/\theta$.
Then $T(0,n)/n\to0$ almost surely.
\end{prop}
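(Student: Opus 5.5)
Fix a small target speed $\eta>0$; the plan is to show $\limsup_n T(0,n)/n\le C\eta$ almost surely. The mechanism is the reverse of the hub heuristic~\eqref{eq:heur-density}. When $\ga<1+2/\theta$ we have $\sum_d d\cdot F(\eta d^{1-\ga})=\infty$. Choose $\eps>0$ with $(\theta+\eps)(\ga-1)<2$ and use the lower bound in~\eqref{eq:two-sided-F}: the expected number of edges of length $d$ with $t_e\le\eta d$ that have an endpoint in a window of size $d$ is then $\gtrsim d^{2-(\theta+\eps)(\ga-1)}\to\infty$, polynomially in $d$.

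\emph{Step 1: cheap long edges are everywhere.} Call an edge $\{x,y\}$ of length $d$ \emph{$\eta$-cheap} if $t_{x,y}\le\eta d$. For scale $d$, partition $\dZ$ into intervals of length $\delta d$, with $\delta=\delta(\eta)$ small. There are about $(\delta d)^2$ edges joining a fixed interval $J$ to the translate $J+d$. By independence, the probability that none of them is $\eta$-cheap is at most
\[
\exp\{-c\,\delta^2 d^{2-(\theta+\eps)(\ga-1)}\},
\]
which is stretched-exponentially small. A union bound over the $O(n)$ intervals meeting $[-n,2n]$ and over dyadic scales $d\le n$ then gives the following, with summable failure probability in $n$: every interval $J$ of length $\delta d$ in $[-n,2n]$ is joined to $J+d$ by an $\eta$-cheap edge, for every dyadic $d\in[d_0,n]$.

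\emph{Step 2: hierarchical path construction.} Next build a path from $0$ to $n$ recursively. Take an $\eta$-cheap edge from an endpoint $x\in[0,\delta n]$ to $y\in[n-\delta n,n]$, at cost at most $\eta n$. Then recursively connect $0$ to $x$ and $y$ to $n$, which are distances at most $\delta n$ apart. At depth $k$ there are $2^k$ gaps, each of size at most $\delta^k n$, so the total cost is at most
\[
\eta n\sum_k (2\delta)^k\le 2\eta n \quad\text{for } \delta<1/4.
\]
The recursion stops at gaps of size below $d_0$. Each remaining gap is bridged by a bounded number of short edges, say via the spread-out envelopes of Lemma~\ref{lem:amplify}, whose total cost over the at most $n/d_0\cdot O(1)$ short gaps is $O(n/d_0)$ almost surely by the law of large numbers for the $Y_m$. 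Gap sizes need not be exactly dyadic, so I would round each to the nearest scale and use intervals of length $\delta d/2$ to absorb the mismatch.

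\emph{Step 3: conclusion.} Borel--Cantelli gives $\limsup_n T(0,n)/n\le 2\eta+C/d_0$ almost surely. Letting $d_0\to\infty$ and then $\eta\downarrow0$ yields $T(0,n)/n\to0$; the lower bound $T\ge0$ is trivial. For $\ga\le1$ the result is easier still, since direct edges are already cheap on average. There, Step 1 can be run with the same counting, as $F(\eta d^{1-\ga})$ is bounded below.

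The main obstacle is the bookkeeping in Step 2. The recursive gaps have arbitrary lengths and positions, so the geometric-series cost bound must survive this rounding. The uniform-over-positions statement of Step 1 must also genuinely cover every gap created. Handling the leaves with only $\E\go^\zeta<\infty$ is a secondary issue; I expect to settle it by simply citing~\eqref{eq:path-envelope} and the finite mean of $Y_0$.
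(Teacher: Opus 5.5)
Your Step~1 does not hold as stated, and the Borel--Cantelli conclusion in Step~3 depends on it. The union bound runs over $\asymp n/(\delta d)$ intervals at each scale. At the bottom scale $d=d_0$, which you keep fixed, each interval fails with a probability $q(d_0)>0$ that does not depend on $n$. It is at most of order $\exp\{-c\,d_0^{\gamma}\}$, where $\gamma:=2-(\theta+\eps)(\ga-1)>0$. The resulting bound $\asymp (n/d_0)\,q(d_0)$ therefore diverges instead of being summable.

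In fact the event itself is typically false. For distinct grid intervals $J$ of length $\delta d_0$, the edge families joining $J$ to $J+d_0$ are disjoint, so the failure events are independent. The probability that all of them succeed is $(1-q(d_0))^{cn/d_0}\to0$. So you cannot obtain $\limsup_n T(0,n)/n\le2\eta+C/d_0$ this way.

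Step~2 also has problems. Rounding a gap length to the nearest dyadic scale can be off by a factor $\sqrt2$, which windows of relative size $\delta/2$ cannot absorb. You need scales in ratio $1+O(\delta)$, or simply all integers $d$, which costs only a harmless factor $n$ in the union bound. In addition, each leaf costs of order its length rather than $O(1)$. The leaves also sit at random, environment-dependent positions, so the strong law for the $Y_m$ cannot be applied directly; it must be made uniform over positions.

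The recursion can be repaired as follows:
\begin{enumerate}
\item Let $d_0=d_0(n)\ge C(\log n)^{1/\gamma}$, so that $n^2e^{-c d_0^{\gamma}}$ is summable.
\item After $k\approx\log(n/d_0)/\log(1/\delta)$ levels, the leaves have total length at most $(2\delta)^kn=o(n)$.
\item Bound their envelope cost by the sum of the $o(n)$ largest of $O(n)$ i.i.d.\ integrable envelopes. This is $o(n)$ a.s.: truncate at a level $M$ and use the strong law for $Y_m\ind_{\{Y_m>M\}}$.
\end{enumerate}

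The hierarchy is unnecessary, however, and the paper avoids it by keeping only your first level. Consider the $\asymp\delta^2n^2$ edges joining the deterministic windows $[0,\delta n]$ and $[(1-\delta)n,n]$, all of length at most $n$. The probability that none has $t_e\le\eta n$ is at most $(1-F(\eta n^{1-\ga}))^{c\delta^2n^2}\le\exp(-cn^{\gamma})$, which is summable. Route the two end pieces through the envelope cells of those two fixed windows; the route construction of Lemma~\ref{lem:amplify} does not need the linear-growth condition. Since envelopes are non-negative, the strong law bounds each end piece by $\delta n\,\E Y_0/B+o(n)$, uniformly over the positions of the edge endpoints. This gives $\limsup_nT(0,n)/n\le2\delta\,\E Y_0/B+\eta$ almost surely, and $\delta,\eta$ are arbitrary. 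The end segments need only cost $O(\delta n)$, not $o(n)$, because $\delta$ is a free parameter.
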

\begin{proof}
Choose $p_0>1$, an integer $\ell_0\ge2$ with $\zeta\ell_0>p_0$, and cells of length $B_0=4\ell_0$ on the unshifted grid.
The route argument of Lemma~\ref{lem:amplify}, which does not use~\eqref{eq:linear-regime},
gives i.i.d.~cell envelopes $V_j$ with $\E V_0^{p_0}<\infty$.
Fix $\delta\in(0,1/4)$ and $\eps>0$, and consider all edges joining
$A_n=[0,\floor{\delta n}]\cap\dZ$ to
$C_n=[\ceil{(1-\delta)n},n]\cap\dZ$.
Their lengths are at most $n$. Independence of these distinct edges implies
\[
 \pr(\text{none has }t_e\le\eps n)
 \le (1-F(\eps n^{1-\ga}))^{|A_n||C_n|}.
\]
If $\ga>1$, choose $\eps_F>0$ with $(\theta+\eps_F)(\ga-1)<2$.
The lower bound in~\eqref{eq:two-sided-F} makes the last probability at most
$\exp(-c_{\delta,\eps} n^{2-(\theta+\eps_F)(\ga-1)})$ for large $n$.
If $\ga\le1$, use $F(\eps n^{1-\ga})\ge F(\eps)>0$ instead.
In either case the probabilities are summable. By Borel--Cantelli, almost surely
there is such a fast edge $\{u_n,v_n\}$ for every sufficiently large $n$.

Route from $0$ to $u_n$ inside the left window's cells, cross this edge, and route from $v_n$ to $n$ inside the right window's cells.
The strong law for $(V_j)$ bounds the two endpoint-route costs, divided by $n$,
by $\delta\E V_0/B_0+o(1)$ each, uniformly over the chosen endpoints.
For the right window this follows by subtracting the two partial sums up to
$n/B_0$ and $(1-\delta)n/B_0$; the finitely many boundary cells contribute $o(n)$.
Hence $\limsup_n T(0,n)/n\le 2\delta\E V_0/B_0+\eps$ a.s.
Intersect the probability-one events over rational $\delta,\eps$ and let both decrease to zero.
\end{proof}

For $y>B$, write $\cD_{\ge y}$ and $\cD_{<y}$ for forbidden edges of length at least $y$ and less than $y$, respectively.
Define two comparison metrics $T_-^y,T_+^y$ by retaining the original times when $d_e<y$ and assigning, when $d_e\ge y$,
\[
	t_{e,-}^y:=C_0(d_e+2B),\quad t_{e,+}^y:=+\infty.
\]
Both metrics depend only on the edge variables of length less than $y$.
Their time constants are denoted by $\mu_-^y,\mu_+^y$.
The original hubs remain passage-time cut points for both metrics,
	since deterministic long bridges cost at least as much as the envelope bypass bound.
The same cut and renewal arguments apply:
	the short weights remain independent with translation-invariant, length-dependent laws,
	while the other times are deterministic.

\begin{lem}\label{lem:basic}
	\begin{enumeratei}
		\item $\mu\le\E Y_0/B=C_0/2<C_0$.
		\item $0<\mu_-^y\le\mu_+^y\le C_0/2$ and $ \mu_-^y\ge\mu-C\sum_{d\ge y}d^{-7}. $
		\item $\mu\le\mu_+^y\le\mu+C\sum_{d\ge y}(dp_d+d^{-7})$.
	\end{enumeratei}
	In particular both comparison time constants tend to $\mu$ as $y\to\infty$.
\end{lem}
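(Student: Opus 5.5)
The plan is to compare the metrics through edge-by-edge replacement by spread-out envelope routes, and then pass to time constants using the law of large numbers of Proposition~\ref{prop:lln}. For the comparison metrics we use its analogue, which the paper notes holds by the same cut and renewal arguments.

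For (i), Lemma~\ref{lem:amplify}.(iii) gives $T_n\le\Dl(0,n)\le\sum_{m=-1}^{\ceil{n/B}}Y_m$. Dividing by $n$, the strong law for the i.i.d.\ envelopes yields $\mu\le\E Y_0/B$. By~\eqref{eq:a-C0}, $\E Y_0/B=a/(2B)=C_0/2<C_0$.

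For the first part of (ii), the inequality $t^y_{e,-}\le t^y_{e,+}$ holds pointwise, which gives $\mu_-^y\le\mu_+^y$. The envelope routes only use edges of length at most $\ell<B<y$, so they carry their original times in both comparison metrics. Hence the bound of (i) applies verbatim, and $\mu_+^y\le C_0/2$. Positivity of $\mu_-^y$ follows as in Proposition~\ref{prop:lln}: the block reward between hubs is the cost of a path with at least one edge, and every edge time is positive. Edges of length at least $y$ have deterministic time at least $C_0y>0$, and shorter edges have $\go>0$ a.s.

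For the lower bound on $\mu_-^y$, take a $T_-^y$-geodesic from $0$ to $n$ and replace each of its edges $e$ with $d_e\ge y$ by the spread-out route of Lemma~\ref{lem:amplify}.(iii). That route costs at most $W_e:=\sum_{q}Y_q$, summed over the at most $d_e/B+2$ cells meeting $e$. This produces a path in the original environment, so
\[
 T_n\le T^y_-(0,n)+\sum_{e}\big(W_e-C_0(d_e+2B)\big)_+ ,
\]
where the sum may be enlarged to all edges $e\subset[0,n]$ with $d_e\ge y$ that the geodesic uses. To bound its expectation, we drop the dependence on the geodesic. Since a geodesic can be taken to cross each point essentially once (loops deleted), the excess is at most a sum over left endpoints $x\in[0,n]$ of $\max_{d\ge y}\big(W_{x,d}-C_0(d+2B)\big)_+$. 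This bound is crude but suffices. By~\eqref{eq:a-C0}, $a=C_0B$, and the polynomial large-deviation bound~\cite{Nag79} with $\E Y_0^p<\infty$ gives
\[
 \E\big(W_{x,d}-ad/B\big)_+\le C d^{2-p}.
\]
Since $p\ge10$, this is at most $Cd^{-8}$, and the sum over $d\ge y$ is dominated by $C\sum_{d\ge y}d^{-7}$. Dividing by $n$ gives $\mu\le\mu_-^y+C\sum_{d\ge y}d^{-7}$.

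For (iii), we have $t_e\le t^y_{e,+}$ pointwise, hence $\mu\le\mu^y_+$. For the upper bound, take the original geodesic and replace each edge with $d_e\ge y$ by its envelope route, obtaining a $T_+^y$-admissible path. There are two cases for such an edge. If $e$ is not forbidden, then $t_e>C_0(d_e+2B)$, so the replacement costs at most $t_e+(W_e-C_0(d_e+2B))_+$; summing the excess over edges gives the $d^{-7}$ term exactly as in (ii). If $e$ is forbidden, the replacement costs at most $C_0(d_e+2B)+(W_e-C_0(d_e+2B))_+$, while $t_e\ge0$ is lost. The forbidden field is independent of $(Y_q)$, and each edge of length $d$ with a given left endpoint is forbidden with probability $p_d$. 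Therefore the expected added cost per unit length is at most $\sum_{d\ge y}p_d\,C_0(d+2B)\le C\sum_{d\ge y}dp_d$, plus the $d^{-7}$ excess. Dividing by $n$ and letting $n\to\infty$ gives (iii).

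The final assertion follows from (ii) and (iii). Both error sums tend to $0$ by Lemma~\ref{lem:summability}.(i).

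I expect the main obstacle to be justifying the per-unit-length accounting: the geodesic is random, and so is the set of long edges it uses. I would handle this by bounding the total excess by a deterministic sum over all candidate long edges whose envelope window lies in $[0,n]$, using only that a geodesic's long edges have pairwise disjoint interiors. This bounds their number and positions by a union bound over left endpoints, and after that the estimates are linear in $n$. The envelope routes also leave $[0,n]$ by at most a cell, which contributes $O(1)$.
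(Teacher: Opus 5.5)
Your strategy is the paper's. You replace each long edge by the cell route of Lemma~\ref{lem:amplify}.(iii), pay a large-deviation excess for the envelope sum, and pay $C_0(d+2B)$ only on forbidden edges, which produces the $\sum_{d\ge y}dp_d$ term. Your excess estimate $\E(W_{x,d}-ad/B)_+\le Cd^{2-p}$ is slightly sharper than the paper's $\E(W_e;W_e\ge C_0(d+2B))\le Cd^{-7}$, and either suffices. Part (i), the ordering $\mu_-^y\le\mu_+^y$, the bound $\mu_+^y\le C_0/2$ and positivity are handled as in the paper.

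The step that does not go through as written is the localization to $[0,n]$. You take an \emph{unrestricted} geodesic from $0$ to $n$ (for $T$ or $T_-^y$) and then dominate its long edges by the candidate edges $e\subset[0,n]$. Such a geodesic need not stay in $[0,n]$: it may step to a negative vertex to use a cheap long edge. Its confinement interval is only $[\hub_0,\hub_{\Nn+1}]$, and a naive expectation bound over that random window is unavailable in the stable regime, where $\E\hub_1=\infty$.

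The paper handles this first. It passes to the three metrics restricted to $[0,n]$, with $n\in B\dN$ on the unshifted grid, and proves that their time constants agree with the unrestricted ones. To do so, it joins $0$ and $n$ to the first and last original hubs in $(0,n)$ by cell routes; these cost $O(1)$ and $C_0\cdot o(n)$. In between it uses block geodesics, which stay inside by confinement. Agreement of the expectation limits then follows from $V(0,n)\le\sum_{0\le m<n/B}Y_m$ and uniform integrability.

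With $n\in B\dN$, the cell route of every edge $e\subset[0,n]$ stays in $[0,n]$. The modified path is then admissible for the restricted comparison metric, and the deterministic sum over candidate edges legitimately dominates the excess. This also removes the need for your remark about routes leaving $[0,n]$. An almost-sure version using the ergodic theorem for the per-site excess would also work, but some such step is needed.

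Finally, your claim that a geodesic's long edges have pairwise disjoint interiors is false in general, since a geodesic may backtrack over a cheap long edge. You do not need it. Summing $(W_e-C_0(d_e+2B))_+$ and $C_0(d_e+2B)\1_{\{e\in\cD\}}$ over \emph{all} candidate edges, with at most $n+1$ left endpoints per length, already gives a bound linear in $n$.
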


\begin{proof}
	We work on the unshifted cell grid under the original product law of the edge variables.
	All expectations and independence statements in this proof refer to this law.
	We first justify working with restricted paths.
	Take $n\in B\dN$ and let $V(0,n)$ be any of the three metrics restricted to $[0,n]$.
	Its time constant agrees with the unrestricted one.
	Indeed, eventually there are original hubs between 0 and $n$.
	Join 0 to the first such hub by a cell route, use the optimal block paths up to the last hub,
		and finish with another cell route.
	The first cost is a fixed finite random variable.
	The last is at most $C_0$ times the distance to the last hub; that distance is $o(n)$ almost surely.
	The block paths lie inside $[0,n]$ by confinement at both hubs.
	This proves agreement of the almost-sure limits.
	For each metric the common cell routes give
	$
		V(0,n)\le\sum_{0\le m< n/B}Y_m.
	$
	The finite $p$-th moment of $Y_0$ gives uniform integrability after division by $n$,
		hence agreement also of the limits of expectations.
	The almost-sure law of large numbers for the cell envelopes gives separately
	\[
		\max\{\mu,\mu_-^y,\mu_+^y\}\le\E Y_0/B=C_0/2.
	\]
	Existence and positivity for each metric follow from the same regenerative LLN proof as Proposition~\ref{prop:lln},
		using the original hubs and their finite mean gaps.
	Edges spanning an original hub are dominated also in the two modified metrics.

	For an edge $e=\{u,v\}$ of length $d>B$ in $[0,n]$,
		let $W_e$ be the sum of the envelopes of the cells intersecting the interior of $[u,v]$.
	It bounds the cost of a cell route inside $[0,n]$, is independent of $\go_e$, and has
	\[
		\E W_e\le C_0(d+2B)/2,\quad \E W_e^p\le Cd^p.
	\]
	The large-deviation bound for sums of independent variables~\cite{Nag79},
		followed by H\"older's inequality, gives for $p\ge10$
	\begin{align}\label{eq:envelope-excess}
		\E\big(W_e;W_e\ge C_0(d+2B)\big)
		\le Cd\,\big(d^{1-p}\big)^{1-1/p}\le Cd^{-7}.
	\end{align}
	All constants may depend on the fixed cell parameters.

	Let $\pi$ be an optimal restricted path in the original metric.
	Each of its edges satisfies $t_e\le W_e$, since otherwise it could be replaced by the cell route.
	Replacing edges with $d_e\ge y$ therefore yields
	\begin{align}\label{eq:truncation-bound}
		V_+^y(0,n)\le V(0,n)+
		\sum_{\substack{e\subset[0,n],\ d_e\ge y}}W_e\1_{\{t_e\le W_e\}}.
	\end{align}
	On $\{W_e\le C_0(d+2B)\}$ the indicator implies that $e$ is forbidden, and the expectation is bounded by $Cd p_d$.
	On its complement use~\eqref{eq:envelope-excess}.
	There are at most $n+1$ left endpoints for each length.
	Taking expectations, dividing by $n$, and passing to the limit proves (iii).

	Conversely take an optimal restricted path for $T_-^y$.
	Replace every artificial long edge by its cell route in the original metric.
	The increase is non-positive when $W_e<C_0(d+2B)$ and otherwise is at most $W_e\1_{\{W_e\ge C_0(d+2B)\}}$.
	The expected increase per possible edge is therefore at most the left-hand side of~\eqref{eq:envelope-excess}.
	Summation and passage to the limit give $\mu\le\mu_-^y+C\sum_{d\ge y}d^{-7}$.
	Monotonicity gives $\mu_-^y\le\mu_+^y$ and $\mu\le\mu_+^y$.
	This proves (i)--(ii).
\end{proof}

\section{Gaussian fluctuations: proof of Theorem~\ref{thm:main}}\label{sec:clt}

Throughout this section, we assume~\eqref{eq:alpha-condition},
	$\gd>0$ is as in Lemma~\ref{lem:second-moment}.
	We use $\gd=\delta_G$ and the choice $p>2+2/\delta_G$ made before the construction in Section~\ref{ssec:amplify}.
Let $(L_i,G_i)$ be as in~\eqref{eq:LG}.
Proposition~\ref{prop:regeneration} gives
\begin{align}\label{eq:second-moments}
	0<\E^0L<\infty,\quad \E^0(L^2)+\E^0(G^2)<\infty.
\end{align}
Define $m_L:=\E^0L$, $\mu:=\E^0G/m_L$, $Z_i:=G_i-\mu L_i$ and
\begin{align}\label{eq:sigma}
	\gs^2:={\var^0(G-\mu L)}/{m_L},
\end{align}
where the variance is computed under $\pr^0$.
By~\eqref{eq:G-bound}, $\abs{Z_i}\le(C_0+\mu)L_i$.

\begin{lem}[Non-degeneracy]\label{lem:nondegenerate}
	Under Assumption~\ref{ass:law} and $\ga>1+2/\theta$, $\var(G-\mu L)>0$ whenever it is finite.
\end{lem}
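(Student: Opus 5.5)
We need to show $G-\mu L$ is not a.s. constant under $\pr^0$. Suppose for contradiction $G-\mu L=c$ a.s. under $\pr^0$. The plan is to find two events of positive $\pr^0$-probability on which $L$ takes the same value $L=B$ (say hubs at $0$ and $B$ with no hub in between, trivially true) but $G$ takes values in disjoint ranges. Then $G$ is not a.s. determined by $L$, contradicting $G=\mu L+c$.

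\emph{Step 1: positive probability of $L=B$.} Under $\pr^0$, the event $\{\hub_1=B\}=H_0\cap H_1$. By the factorization \eqref{eq:two-hubs} of Lemma~\ref{lem:cut}.(iii) with $m=0,m'=1$,
\[
\pr(H_0\cap H_1)=\pr(H_0^-)\pr(F^{(0,1)})\pr(H_0^\times\cap H_1^\times)\pr(H_1^+).
\]
Each factor is positive. $F^{(0,1)}$ only requires $Y_0\le a$ and $Y_0<b/2$, and $b<a$, so it is implied by $\{Y_0<b/2\}$, which has positive probability by \eqref{eq:b-s}. The joint crossing factor is an intersection of decreasing-type conditions on distinct independent edges. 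These are the portal conditions $t_e>b$ for short crossing edges of $0$ and $B$, and the no-forbidden-crossing conditions. Each has positive probability, and Harris' inequality (all are increasing in $\go$) gives positivity, as in the proof of Lemma~\ref{lem:cut}.(i).

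\emph{Step 2: two disjoint ranges for $G$ with $L=B$.} On this event $G=T_{[0,B]}(0,B)$ is a function of the edges inside $I_0$ only (Proposition~\ref{prop:regeneration}.(ii)). The only constraint on these edges from the conditioning is $Y_0<b/2$ (the finite part of $A_0^+$ and $A_1^-$ is then automatic). So conditionally, the law of $(\go_e)_{e\subset I_0}$ is the product law conditioned on $\{Y_0<b/2\}$, a monotone (decreasing) event. Take $0<\eta<b/2$ small.
\begin{enumeratei}
\item On $\{Y_0<\eta\}$, which has positive probability by \eqref{eq:b-s} applied with $b=2\eta$, we have $G\le Y_0<\eta$.
\item On the event that every edge in $I_0$ has $\go_e\in(\eta',\eta'')$ for suitable constants, we have $G\ge\eta'$. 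Here a path from $0$ to $B$ has at least one edge and each edge time is at least $\go_e$. We also need $Y_0<b/2$, which holds if $\eta''$ is small since $Y_0\le C\eta''$ with $C$ depending on $B,\ga$.
\end{enumeratei}
Since $\go$ is non-degenerate with $0\in\operatorname{supp}\go$, for some $0<\eta'<\eta''$ with $C\eta''<b/2$ the interval $(\eta',\eta'')$ carries positive mass. We need $\eta'$ strictly positive, which holds because the support of $\go$ contains points arbitrarily close to $0$ but $\pr(\go>0)=1$. Then choose $\eta<\eta'$ and use (i) with this $\eta$. Both events have positive probability and are contained in $\{Y_0<b/2\}$, hence have positive $\pr^0$-probability jointly with $L=B$.

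\emph{Step 3: conclusion.} On both events $L=B$, while $G<\eta$ on one and $G\ge\eta'>\eta$ on the other. Hence $G-\mu L$ takes values in two disjoint sets with positive probability and cannot be a.s. constant. So $\var^0(G-\mu L)>0$ whenever it is finite.

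The main obstacle is the careful positivity check of the joint crossing factor in Step 1. One must also confirm that the within-cell events in Step 2 are compatible with the conditioning. This holds because they are subsets of $\{Y_0<b/2\}$ and depend only on the edges in $I_0$, which are independent of the other factors.
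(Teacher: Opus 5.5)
Your proof is correct, and it takes a simpler route than the paper's.

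\textbf{The paper's route.} The paper builds an event on which $0$ and $3B$ are consecutive hubs, with the intermediate boundaries $B$ and $2B$ covered by a single forbidden edge $e^\ast=\{1,3B-1\}$. It then controls every competing route: it imposes a barrier of expensive edges through the midpoint of $I_1$, and lower bounds on all crossing edges, whose joint positivity needs a summability estimate. This shows that $G$ is strictly increasing in $\go_{e^\ast}$ on a range charged by the conditional law of $\go_{e^\ast}$.

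\textbf{Your route.} You use the minimal block $L=B$. By \eqref{eq:two-hubs} with $m=0$, $m'=1$, the only constraint on the edges inside $I_0$ is $F^{(0,1)}=\{Y_0<b/2\}$. Since $G=T_{[0,B]}(0,B)$ depends on those edges alone, you compare two sub-events:
\begin{itemize}
\item on $\{Y_0<\eta\}$, $G\le Y_0<\eta$;
\item on $\{\go_e\in(\eta',\eta'')\text{ for all } e\subset I_0\}$, $G\ge\eta'$.
\end{itemize}
This needs nothing beyond the cut factorization already proved.

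\textbf{What each buys.} The paper's construction exhibits the variability as coming from a long edge. It is also recycled, with $e^\ast$ replaced by $\{1,4B-1\}$, to get $\var(L)>0$ for the hub count in the proof of Corollary~\ref{cor:count}. Your setup adapts just as easily there. Compare $L=B$ with $L=2B$, where $L=2B$ is obtained on $H_0\cap H_2$ by letting one edge of length at most $B$ that strictly crosses $B$ violate $Q_1^\times$. That edge lies inside $[0,2B]$ but in no cell, so it is independent of all the other conditions.

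\textbf{One tightening in Step 1.} Argue positivity of the joint crossing factor as
\[
\pr(H_0^\times\cap H_1^\times)\ge\pr(H_0^\times)\,\pr(H_1^\times)>0,
\]
using Harris' inequality (both are increasing events in the weights) together with Lemma~\ref{lem:cut}.(i). Saying only that each single-edge condition has positive probability does not suffice for an infinite intersection. It is the summability \eqref{eq:K-choice} behind Lemma~\ref{lem:cut}.(i) that keeps the product positive.
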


\begin{proof}
	Since $\E^0(G-\mu L)=0$, it suffices to exhibit an event of positive $\pr^0$-probability on which $L$ is constant and $G$ is not.
	Let $e^\ast:=\{1,3B-1\}$, an edge of length $3B-2>B$ which strictly crosses the boundaries $B$ and $2B$ and no other.
	Let $c:=B+\floor{B/2}$ be the midpoint of the cell $I_1$.
	Since $0$ lies in the support of $\go$,
		we can fix $0<c_1<c_2<a/(2B)$ with $\pr(c_1\le\go\le c_2)>0$, and we define $\eps_0:=c_1/2$.
	Consider the event $E$ on which
	\begin{enumeratei}
		\item $Y_{-1},Y_0,Y_2,Y_3<b/2$, $t_{0,1}<\eps_0$, $t_{3B-1,3B}<\eps_0$, and $A_{-1}^-$, $A_4^+$ hold;
		\item every nearest-neighbor edge inside $I_1$ has $\go_e\le c_2$,
				and every edge inside $I_1$ that is incident to $c$ or strictly crosses $c$ has $\go_e\ge c_1$;
		\item every edge in no cell that strictly crosses $c$ or is incident to $c$, other than $e^\ast$,
				has passage time at least $2\eps_0$,
				the crossing parts $H_0^\times$ and $H_3^\times$ of the hub events at $0$ and $3B$ hold, and $e^\ast$ is forbidden.
	\end{enumeratei}
	The three groups of conditions concern the disjoint edge sets ``inside the cells other than $I_1$'',
		``inside $I_1$'' and ``in no cell'', so they are independent.
	Group (i) consists of decreasing events of positive probability, hence has positive probability by Harris' inequality.
	Group (ii) consists of finitely many independent interval conditions, each of positive probability.
	Group (iii) imposes finitely many lower bounds on each crossing edge and one upper bound on $e^\ast$,
		and the infinite product of the corresponding probabilities is positive because $\sum_d(d-1)\big(p_d+F(2\eps_0d^{-\ga})\big)<\infty$ by Lemma~\ref{lem:summability} and $\ga\theta>2$.
	Hence $\pr(E)>0$.

	On $E$ the boundaries $0$ and $3B$ are hubs.
	Indeed, (ii) gives $Y_1\le Bc_2<a/2$, because along the nearest-neighbor path of $I_1$ each edge costs $\go_e\le c_2$.
	Together with (i) and $b<a$ this gives $S_1,\ldots,S_4<0$ and $S_{-1},\ldots,S_3>S_4$,
		so $A_0^+$ and $A_3^-$ follow from $A_4^+$, and $A_0^-$, $A_3^+$ from $A_{-1}^-$, $A_4^+$,
		as in the proof of Lemma~\ref{lem:cut}.(ii).
	The portal and crossing conditions at $0$ and $3B$ are contained in (i) and (iii).
	The boundaries $B$ and $2B$ are covered by the forbidden edge $e^\ast$.
	Hence under $\pr^0$, on $E$ the next hub after $0$ is $3B$ and $L=3B$.

	Write $G=\min(T',\,t_{e^\ast}+T'')$,
		where $T'$ is the infimum over paths from $0$ to $3B$ avoiding $e^\ast$,
		and $T''$ is the infimum over the remaining parts of paths that use $e^\ast$ once.
	Neither depends on $\go_{e^\ast}$.
	Every path avoiding $e^\ast$ either visits $c$ or uses an edge strictly crossing $c$,
		and by (ii)--(iii) every such edge costs at least $2\eps_0$, so $T'\ge2\eps_0$.
	On the other hand $T''\le t_{0,1}+t_{3B-1,3B}<2\eps_0$ by (i).
	Hence $G=(3B-2)^{\ga}\go_{e^\ast}+T''$ whenever $(3B-2)^{\ga}\go_{e^\ast}<T'-T''$,
		and on that range $G$ is strictly increasing in $\go_{e^\ast}$.
	Since $E$ depends on $\go_{e^\ast}$ only through the upper bound ``$e^\ast$ forbidden'',
		the conditional law of $\go_{e^\ast}$ given the other edges and $E$ charges every neighborhood of $0$.
	Therefore $G$ is not a.s.~constant on $E$, while $L=3B$ there.
\end{proof}

\subsection{Residual terms}
Let $\Nn=\max\{i:\hub_i\le n\}$ in the stationary enumeration,
	let $M_n:=(\Nn-1)_+$ be the number of complete blocks in $[\hub_1,n]$ on $\{\hub_1\le n\}$,
	and let $\Ares_n:=n-\hub_{\Nn}$ be the backward recurrence time.
Recall~\eqref{eq:decomp} and define
\begin{align}\label{eq:residuals}
	R_n^-:=T(0,\hub_1)-\mu\hub_1,\quad R_n^+:=T(\hub_{\Nn},n)-\mu \Ares_n .
\end{align}
By Proposition~\ref{prop:regeneration}.(ii),
	$\abs{R_n^-}\le(C_0+\mu)(\hub_1+B)$ and $\abs{R_n^+}\le(C_0+\mu)(\Ares_n+B)$.
Under the stationary, randomly shifted grid, $\Ares_n\equald\hub_1-1$ for every integer $n$.
In particular these variables are tight.
The inequality $\Ares_n\le n$ holds on $E_n:=\{\hub_1\le n\}$.

\begin{lem}\label{lem:residual}
	We have $\E(\hub_1)+\E(\Ares_n;\hub_1\le n)<\infty$, $\E(\hub_1^2;\hub_1\le n)/n\to0$ and $\E(\Ares_n^2;\hub_1\le n)/n\to0$.
	Consequently $(R_n^-+R_n^+)/\sqrt n\toP0$ and $\E\big((R_n^-+R_n^+)^2;\hub_1\le n\big)=o(n)$.
\end{lem}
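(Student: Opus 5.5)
The approach is to reduce every claim to moments of the Palm gap $L$ through the stationary--Palm relation~\eqref{eq:palm-stat}, using $\E^0(L^2)<\infty$ from Proposition~\ref{prop:regeneration}.(iii).

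\emph{First moment of $\hub_1$.} By~\eqref{eq:palm-stat},
\[
\E(\hub_1)=\sum_{v\ge1}\pr(\hub_1\ge v)=\frac{\E^0\big(L(L+1)\big)}{2\E^0L}<\infty .
\]
For the backward recurrence time we have $\Ares_n\equald\hub_1-1$ under the shifted stationary grid. Hence $\E(\Ares_n;\hub_1\le n)\le\E(\Ares_n)=\E(\hub_1)-1<\infty$, uniformly in $n$.

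\emph{Truncated second moments.} On $\{\hub_1\le n\}$ we have $\hub_1\le n$, so $\hub_1^2/n\le\hub_1\min(\hub_1/n,1)$. Since $\hub_1$ is integrable and $\hub_1\min(\hub_1/n,1)\to0$ pointwise, dominated convergence gives $\E(\hub_1^2;\hub_1\le n)/n\to0$.

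For $\Ares_n$ I use $\Ares_n\le n$ on $E_n$, which gives $\E(\Ares_n^2;E_n)\le\E(\Ares_n\min(\Ares_n,n))$. Because $\Ares_n\equald\hub_1-1$ for each fixed $n$, this equals $\E\big((\hub_1-1)\min(\hub_1-1,n)\big)$. Dividing by $n$ and applying dominated convergence with dominating variable $\hub_1$ gives $o(n)$.

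\emph{Residuals.} By Proposition~\ref{prop:regeneration}.(ii),
\[
(R_n^-+R_n^+)^2\le 2(C_0+\mu)^2\big((\hub_1+B)^2+(\Ares_n+B)^2\big).
\]
Expanding $(x+B)^2\le 2x^2+2B^2$ and using the two $o(n)$ bounds, the constant terms contribute only $O(1)$. This gives $\E\big((R_n^-+R_n^+)^2;\hub_1\le n\big)=o(n)$.

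For convergence in probability, $|R_n^-+R_n^+|\le(C_0+\mu)(\hub_1+\Ares_n+2B)$. The sequence $\Ares_n$ is tight because its law is that of $\hub_1-1$ for every $n$, and $\hub_1$ is a.s. finite. Dividing by $\sqrt n$ therefore gives $(R_n^-+R_n^+)/\sqrt n\toP0$.

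\emph{Main obstacle.} The only delicate point is the identity $\Ares_n\equald\hub_1-1$ for every $n$, which the text has already recorded. It holds because the randomly shifted grid makes $\cH$ stationary under integer translations, and a stationary renewal process on $\dZ$ is reversible in distribution, so its backward and forward recurrence times share a law up to the $-1$ shift. With that identity available, everything else is routine dominated convergence. In particular, no uniform-in-$n$ second moment of $\hub_1$ is needed, which matters because $\E(\hub_1^2)$ would require $\E^0L^3<\infty$, and that is not available.
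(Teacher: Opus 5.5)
Your proof is correct and follows essentially the paper's route. Everything is reduced to $\E^0L^2<\infty$ through \eqref{eq:palm-stat} and dominated convergence. The only cosmetic difference is that you apply dominated convergence to the random variables $\hub_1\min(\hub_1/n,1)$ and transfer to $\Ares_n$ via $\Ares_n\equald\hub_1-1$, whereas the paper applies it to the tail series in $\pr^0(L\ge w)$.

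One small correction to your justification: that identity needs only translation invariance, not reversibility, since $\pr(\Ares_n\ge v)=\pr(\cH\cap(n-v,n]=\eset)=\pr(\hub_1>v)$.
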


\begin{proof}
	By~\eqref{eq:palm-stat}, $\E(\hub_1)=\sum_{v\ge1}\E^0((L-v+1)_+)/m_L=\E^0(L(L+1))/(2m_L)<\infty$, and the same for $\Ares_n\ind_{\hub_1\le n}$.
	Moreover, since $\Ares_n\le n$ and $\pr(\Ares_n\ge v)=\sum_{w\ge v+1}\pr^0(L\ge w)/m_L$, we have
	\begin{align*}
		\frac{\E(\Ares_n^2;\hub_1\le n)}n
		&\le\frac2n\sum_{v\le n}v\,\pr(\Ares_n\ge v)\\
		&=\frac2{nm_L}\sum_{w\ge1}\pr^0(L\ge w)\sum_{v\le\min(w,n)}v
		\le\frac1{m_L}\sum_{w\ge1}\pr^0(L\ge w)\,\frac{(\min(w,n)+1)^2}{n}.
	\end{align*}
	Each summand tends to $0$ as $n\to\infty$ and is bounded by $4(w+1)\pr^0(L\ge w)$,
		which is summable because $\E^0(L^2)<\infty$.
	Dominated convergence gives $\E(\Ares_n^2;\hub_1\le n)/n\to0$.
	The same argument applies to $\E(\hub_1^2;\hub_1\le n)$.
	The remaining claims follow from the bounds on $R_n^\pm$ and Chebyshev's inequality.
\end{proof}

\subsection{Central limit theorem}
\begin{proof}[Proof of Theorem~\ref{thm:main}]
	The law of large numbers~\eqref{eq:LLN-main} is Proposition~\ref{prop:lln}.
	The variables $(Z_i)_{i\ge1}$ are i.i.d., centered, with variance $\var^0Z=m_L\gs^2\in(0,\infty)$ by~\eqref{eq:second-moments} and Lemma~\ref{lem:nondegenerate},
		and they are independent of $\hub_1$.
	By~\eqref{eq:decomp}, on $\{\hub_1\le n\}$,
	\begin{align}\label{eq:Tn-decomp}
		T_n-\mu n=\sum_{i=1}^{M_n}Z_i+R_n^-+R_n^+ .
	\end{align}
	The renewal strong law gives $M_n/n\to1/m_L$ a.s.
	Hence Anscombe's theorem~\cite{Ans52},
		\cite[Theorem~1.3.1]{Gut09} yields
	\[
		n^{-1/2}\sum_{i\le M_n}Z_i\tod\N(0,\var(Z_1)/m_L)=\N(0,\gs^2),
	\]
	and Lemma~\ref{lem:residual} with Slutsky's theorem proves~\eqref{eq:CLT-main}.

	For the moment assertions, $N_n$ is a bounded stopping time for $\cF_i:=\gs(\hub_1,(L_j,Z_j)_{1\le j\le i})$,
		with $\{N_n=0\}=\{\hub_1>n\}\in\cF_0$.
	The positive-index blocks are independent of $\hub_1$.
	Stationarity on $\dZ$ gives $\E N_n=n/m_L$. More explicitly, on $\{\hub_1\le n\}$,
$N_n$ is the first $k\ge1$ for which $\hub_1+\sum_{j=1}^kL_j>n$;
otherwise $N_n=0$. Hence $\{N_n\le i\}\in\cF_i$ and $N_n\le\floor{n/B}+1$.
The centered partial sums form a square-integrable martingale, so optional stopping yields
	\begin{align}\label{eq:wald}
		\E\Big(\sum_{i=1}^{N_n}Z_i\Big)=0,\quad
		\E\Big[\Big(\sum_{i=1}^{N_n}Z_i\Big)^2\Big]=\var(Z_1)n/m_L.
	\end{align}
	Set $D_n:=Z_{N_n}\1_{E_n}$, assigning $D_n=0$ when $N_n=0$.
	Then $\sum_{i=1}^{M_n}Z_i=\sum_{i=1}^{N_n}Z_i-D_n$ on the whole space.
	Summing over possible positive hub locations and using the Palm block law gives
	\begin{align*}
		\E D_n^2
		& =\frac1{m_L}\sum_{s=1}^n\E^0[Z^2;L>n-s]=o(n),\text{ and }
		\E|D_n| \le\frac1{m_L}\sum_{r\ge0}\E^0[|Z|;L>r]
		=\frac{\E^0[L|Z|]}{m_L}<\infty.
	\end{align*}
	The first conclusion is Ces\`aro averaging, since $\E^0 Z^2<\infty$; the second is uniform in $n$,
		since $|Z|\le(C_0+\mu)L$ and $\E^0L^2<\infty$.
	Cauchy--Schwarz and~\eqref{eq:wald} now give
	\begin{align}\label{eq:SM-moments}
		\E\Big(\sum_{i=1}^{M_n}Z_i\Big)=O(1),\quad
		\E\Big(\Big(\sum_{i=1}^{M_n}Z_i\Big)^2\Big)=\var(Z_1)n/m_L+o(n).
	\end{align}

	On $\{\hub_1>n\}$ we use $0\le T_n\le W_n:=\sum_{m=-1}^{\ceil{n/B}}Y_m$,
		$\E(W_n^p)\le Cn^p$ and $\pr(\hub_1>n)\le Cn^{-1-\gd}$ (Lemma~\ref{lem:second-moment}), so that by H\"older's inequality
	\begin{align}\label{eq:bad-event}
		\E\big((T_n-\mu n)^2;\hub_1>n\big)\le Cn^2\pr(\hub_1>n)^{1-2/p}\le Cn^{2-(1+\gd)(1-2/p)}=o(n),
	\end{align}
	because $p>2+2/\gd$.
	In the same way,
	\begin{align*}
		\E(\abs{T_n-\mu n};\hub_1>n)\le Cn\,\pr(\hub_1>n)^{1-1/p}\le Cn^{1-(1+\gd)(1-1/p)}=o(1),
	\end{align*}
	because $p>1+1/\gd$.
	On $\{\hub_1>n\}$ the sums in~\eqref{eq:SM-moments} are empty,
		so~\eqref{eq:Tn-decomp} may be used on $\{\hub_1\le n\}$ and the two bounds just proved on its complement.
	Combining~\eqref{eq:Tn-decomp},~\eqref{eq:SM-moments},
		Lemma~\ref{lem:residual} and~\eqref{eq:bad-event} gives $\E T_n-\mu n=O(1)+O(\E(\hub_1+\Ares_n))+o(1)=O(1)$ and $\E((T_n-\mu n)^2)=\var(Z_1)n/m_L+o(n)$,
		where the cross terms are $o(n)$ by the Cauchy--Schwarz inequality.
	Hence $\var(T_n)=\gs^2n+o(n)$.
\end{proof}

\begin{proof}[Proof of Corollaries~\ref{cor:fclt} and~\ref{cor:count}]
	\textbf{Corollary~\ref{cor:fclt}.}
	Let $U_k:=\sum_{i\le k}Z_i$ and $\hub_{k+1}=\hub_1+\sum_{i\le k}L_i$.
	Donsker's theorem gives $(U_{\floor{ns}}/\sqrt n)_{s\ge0}$ a Brownian limit with variance parameter $\var^0Z$.
The uncentered renewal strong law gives
$M_{\floor{nt}}/n\to t/m_L$ uniformly on compact time intervals, almost surely.
Joint convergence with this deterministic clock and continuity of composition at continuous paths
give $(U_{M_{\floor{nt}}}/\sqrt n)_{t\ge0}\tod(\gs B(t))_{t\ge0}$.
Independence between $Z_i$ and $L_i$ is not required.
The centered clock fluctuation is not itself an increasing time change.
	The residual terms are handled uniformly on compacts:
	\[
		\sup_{t\le t_0}\abs{R^\pm_{\floor{nt}}}\le(C_0+\mu)\left(\max_{0\le i\le N_{\floor{nt_0}}+1}L_i+\hub_1+B\right)=o_{\pr}(\sqrt n)
	\]
	because $\E^0(L^2)<\infty$ implies $\max_{i\le k}L_i=o(\sqrt k)$ a.s.
For the initial segment before the first positive hub, linear control at $\hub_1$ gives
$T(0,m)\le T(0,\hub_1)+T(m,\hub_1)\le2C_0(\hub_1+B)$ for $0\le m<\hub_1$.
This fixed random bound divided by $\sqrt n$ vanishes almost surely.
The replacement of $\mu\floor{nt}$ by $\mu nt$ contributes at most $\mu/\sqrt n$ uniformly in $t$.
Thus the preceding uniform argument also covers the initial incomplete block and the rounding of the spatial coordinate.

	\smallskip\noindent\textbf{Corollary~\ref{cor:count}, hub count.}
	The a.s.~statement is the renewal strong law.
	Writing $n=\hub_1+\sum_{i=1}^{M_n}L_i+\Ares_n$ and $U'_k:=\sum_{i\le k}(L_i-m_L)$ gives
	\begin{align*}
		\frac{\Nn-n/m_L}{\sqrt n}=-\frac1{m_L}\frac{U'_{M_n}}{\sqrt n}+\frac{m_L-\hub_1-\Ares_n}{m_L\sqrt n},
	\end{align*}
	where the first term converges to $\N(0,\var(L)/m_L^3)$ by Anscombe's theorem and the second is $o_P(1)$ by Lemma~\ref{lem:residual}.
	Non-degeneracy, $\var(L)>0$, follows from Lemma~\ref{lem:nondegenerate}'s event $E$ (on which $L=3B$) together with the analogous event on which $0$ and $4B$ are consecutive hubs (replace $e^\ast$ by $\{1,4B-1\}$), which has positive probability by the same argument.

	\smallskip\noindent\textbf{Hop count.}
	For $x<y$ let $h(x,y)$ be the smallest number of edges of a geodesic from $x$ to $y$.
	If $x<z<y$ and $z$ is a hub, every geodesic from $x$ to $y$ passes through $z$ and its two halves are geodesics,
		and conversely the concatenation of two geodesics through $z$ is a geodesic.
	Hence $h(x,y)=h(x,z)+h(z,y)$.
	With $h_i:=h(\hub_i,\hub_{i+1})$, on $\{\hub_1\le n\}$,
	\begin{align*}
		\hop_n=h(0,\hub_1)+\sum_{i=1}^{M_n}h_i+h(\hub_{\Nn},n).
	\end{align*}
	A geodesic between consecutive hubs is a simple path inside the block, so $h_i\le L_i$,
		and the $h_i$ are i.i.d.~under $\pr^0$ by Proposition~\ref{prop:regeneration}.
	A geodesic from $\hub_{\Nn}$ to $n$ stays in $[\hub_{\Nn},\hub_{\Nn+1}]$ by Lemma~\ref{lem:pivotal},
		so $h(\hub_{\Nn},n)\le L_{\Nn}$, and likewise $h(0,\hub_1)\le L_0$.
	Both residuals are tight, because $L_{\Nn}$ and $L_0$ have the size-biased law with finite mean $\E^0(L^2)/m_L$.
	For the almost-sure limit, we use $L_0/n\to0$ and $L_{N_n}/n\to0$ a.s., the latter following from $\max_{i\le Cn}L_i=o(n)$ a.s. for integrable Palm gaps and $N_n=O(n)$ a.s. For the CLT the tight endpoint terms divided by $\sqrt n$ vanish in probability. The reward argument is then the same as for~\eqref{eq:LLN-main} and~\eqref{eq:CLT-main},
		with $(G_i)$ replaced by $(h_i)$ and $\mu$ by $\nu_H=\E^0(h)/m_L\in(0,1]$.

\smallskip\noindent\textbf{Cut point count.}
If $a<b$ are consecutive hubs and $v\in(a,b)$, then $v$ is a cut point in the definition of Section~\ref{ssec:main} if and only if every block geodesic from $a$ to $b$ visits $v$, \ie
$
T_{[a,b]\setminus\{v\}}(a,b)>T_{[a,b]}(a,b).
$
The strict inequality is equivalent to the stated geodesic criterion because the block is finite and all edge times are positive.

For sufficiency, every geodesic from $u<a$ to $w>b$ visits $a$ and $b$ in that order.
Its segment between them is a block geodesic by the pivotal property and confinement, and hence visits $v$.
Choosing an integer $k\ge1$ with $v-k\le a$ and $v+k\ge b$ proves the cut point property.
For necessity, choose $u$ and $w$ beyond both the defining neighborhood of $v$ and the block endpoints.
If a block geodesic from $a$ to $b$ avoided $v$, concatenate it with geodesics from $u$ to $a$ and from $b$ to $w$.
Confinement at $a,b$ keeps the two outer geodesics in $(-\infty,a]$ and $[b,\infty)$, respectively, so the concatenation avoids $v$.
Additivity at the hubs makes it a geodesic from $u$ to $w$, contradicting the cut point property.
Both restricted passage times in the criterion are determined by the block environment.
Every hub is itself a cut point.
Thus the number $c_i$ of cut points in $(\hub_i,\hub_{i+1}]$ is an i.i.d. block reward, with $1\le c_i\le L_i$.
Its two endpoint contributions are bounded by the lengths of the containing blocks. They vanish after division by $n$ a.s. and after division by $\sqrt n$ in probability, by the preceding hop-count argument.
The renewal-reward strong law and CLT therefore give the asserted formulas with $\nu_{\cut}=\E^0c/m_L$ and $\gs_{\cut}^2=\var^0(c-\nu_{\cut}L)/m_L$.
The same formulas allow zero variance for either count; strict positivity requires a separate argument, unlike the passage-time variance proved in Lemma~\ref{lem:nondegenerate}.

\end{proof}
\section{Tail asymptotics of hub blocks in the stable regime}\label{sec:tails}

\noindent\textbf{Proof roadmap.}
Lemma~\ref{lem:rho} computes the saving-tail function $\rhox$.
Lemma~\ref{lem:L-tail} gives block-length bounds, including bounds uniform in the truncation parameter $y$.
These imply the background-deviation estimates of Lemma~\ref{lem:no-big-dev}, which, together with the first-big-edge decomposition and the cycle formula, yield Proposition~\ref{prop:tail-equiv}.
Lemma~\ref{lem:iid-stable} then identifies the stable domain of attraction; renewal time change and negligible endpoint terms give Theorem~\ref{thm:stable}.
For the functional limit, Lemmas~\ref{lem:good-block} and~\ref{lem:shape} separately show that a long block typically contains one large edge and that its centered passage-time profile is approximately monotone. The $M_1$ approximation of Lemma~\ref{lem:M1-approx} then gives Theorem~\ref{thm:stable-functional}.
Uniformity in $y$ is essential because $y=\eta x$ (or $y=\eta u$) grows with the deviation scale: it is used in Lemma~\ref{lem:no-big-dev}, in the tail-sum split~\eqref{eq:A-split}, and again in Lemma~\ref{lem:good-block}, where $u=\eps a_n$ in the functional proof.

Throughout Sections~\ref{sec:tails} and~\ref{sec:stable} we assume Assumptions~\ref{ass:law} and~\ref{ass:rv} and $1+2/\theta<\ga<1+3/\theta$,
	so that $\gb=\theta(\ga-1)-1\in(1,2)$.
We fix the auxiliary parameters $\ell,K,a,b$ of Section~\ref{sec:spread}, with
\begin{align}\label{eq:p-stable}
	p>2\gb+8+\frac{2\gb}{\gb-1}
\end{align}
in~\eqref{eq:ell-choice}.
None of the constants below depends on $x$ or $u$.
Recall that $\mu=\E^0G/\E^0L$ is the time constant (Proposition~\ref{prop:lln}) and that $\mu<C_0$ (Lemma~\ref{lem:basic}).
In this section, $H_0$ denotes the event that the origin is a hub in the randomly shifted grid, and $H_0^{<y}$ denotes the corresponding $y$-hub event.
Their probabilities are vertex intensities, so $\pr(H_0)\E^0L=1$ and $\pr(H_0^{<y})\E^{0,y}L^{<y}=1$.
This differs by the factor $B$ from the probability of the cell-index event $H_m$ on a fixed grid used in~\eqref{eq:EL}.
For an edge $e=\{u_e,v_e\}$, $u_e<v_e$, of length $d_e=v_e-u_e$ we define its \emph{saving}
\begin{align}\label{eq:saving}
	\varsigma_e:=\mu d_e-t_e,
\end{align}
the amount by which $e$ undercuts the linear cost of its length.
If $\varsigma_e>0$ and $d_e>B$, then $e$ is forbidden, because $\mu<C_0$.
For $x>0$ we say that $e$ is \emph{$x$-saving} if $\varsigma_e\ge x$.
Since $\pr(\{u,u+d\}\text{ is $x$-saving})=\pr\big(\go\le(\mu d-x)_+d^{-\ga}\big)=F\big((\mu d-x)_+d^{-\ga}\big)$,
	the expected number of $x$-saving edges with a given left endpoint is exactly $\rhox(x)$.
An $x$-saving edge has length $d_e>x/\mu$, because $t_e>0$.

\begin{figure}[htbp]
	\centering
	\begin{tikzpicture}[x=1.1cm,y=.7cm,>=Stealth]
		\draw[thick] (0,0) -- (13,0);
		\foreach \x/\n in {0.5/{0}, 12.5/{L}} {\draw[thick] (\x,-0.25) -- (\x,0.25); \node[below] at (\x,-0.3) {$\n$};}
		\foreach \x in {2.2,4.1,9.6,11.2} {\draw[thick,blue!40] (\x,-0.15) -- (\x,0.15);}
		\node[below,blue!50,font=\footnotesize] at (3.15,-0.2) {$y$-hubs};
		\node[below,blue!50,font=\footnotesize] at (10.4,-0.2) {$y$-hubs};
		\draw[red!80!black,thick] (5,0) .. controls (6,2.4) and (8,2.4) .. (9,0);
		\node[red!80!black,above,font=\footnotesize] at (7,1.8) {$e_1$: the unique edge of $\cD_{\ge y}$, saving $\varsigma_{e_1}\approx x$};
		\node[below,font=\footnotesize] at (5,-0.05) {$u_1$};
		\node[below,font=\footnotesize] at (9,-0.05) {$v_1$};
		\draw[decorate,decoration={brace,amplitude=4pt,mirror}] (0.5,-0.9) -- (5,-0.9) node[midway,below=3pt,font=\footnotesize] {$\widehat T^y(0,u_1)-\mu u_1\ge-\eps x/2$};
		\draw[decorate,decoration={brace,amplitude=4pt,mirror}] (9,-0.9) -- (12.5,-0.9) node[midway,below=3pt,font=\footnotesize] {$\widehat T^y(v_1,L)-\mu(L-v_1)\ge-\eps x/2$};
		\foreach \x in {1.3,3.0,6.3,10.3,11.9} {\draw[black!45] (\x,0) .. controls (\x+0.2,0.35) and (\x+0.5,0.35) .. (\x+0.7,0);}
	\end{tikzpicture}
	\caption{The single-big-jump picture behind Proposition~\ref{prop:tail-equiv}.
		A reward $Z<-x$ is produced by one edge $e_1$ of $\cD_{\ge y}$, $y=\eta x$, whose saving is close to $x$.
		Outside $e_1$ the block is a union of $y$-blocks (short ticks) whose rewards are i.i.d.~and centered,
			and their partial sums cannot produce a deviation of order $x$.}\label{fig:bigjump}
\end{figure}

\subsection{The tail function \texorpdfstring{$\rhox$}{rho}}
\begin{lem}\label{lem:rho}
	The function $\rhox(x)=\sum_{d\ge1}F((\mu d-x)_+d^{-\ga})$ of~\eqref{eq:rho} satisfies~\eqref{eq:rho-asymp}.
	In particular $\rhox$ is regularly varying at infinity with index $-\gb$, and $\rhox(x)\asymp\sum_{d\ge x}p_d$.
	Moreover, for every $c\ge1$,
	\begin{align}\label{eq:rho-far}
		\limsup_{x\to\infty}\frac1{\rhox(x)}\sum_{d\ge cx}F\big((\mu d-x)_+d^{-\ga}\big)\le Cc^{-\gb}.
	\end{align}
\end{lem}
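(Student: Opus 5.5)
The plan is to treat $\rhox(x)$ as a Riemann sum and rescale the length variable by $x$. Only lengths $d>x/\mu$ contribute, so I substitute $d=sx/\mu$ with $s>1$. Then
\[
(\mu d-x)_+d^{-\ga}=x^{1-\ga}\mu^{\ga}(s-1)s^{-\ga}=:x^{1-\ga}g(s),
\]
where $g(s)\to0$ both as $s\downarrow1$ and as $s\to\infty$, and $g$ is bounded on $(1,\infty)$. By Assumption~\ref{ass:rv}, for $0<t\le1$ we have $F(t)=t^\theta\slv(t)$. Hence each summand equals
\[
x^{\theta(1-\ga)}g(s)^\theta\,\slv\big(x^{1-\ga}g(s)\big).
\]
Since $g$ is bounded, $x^{1-\ga}g(s)\to0$ uniformly in $s$. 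The uniform convergence theorem \cite[Theorem~1.2.1]{BGT87} then gives $\slv(x^{1-\ga}g(s))/\slv(x^{1-\ga})\to1$ uniformly for $s$ in compact subsets of $(1,\infty)$. The sum over $d$ has mesh $\mu/x$ in $s$, so on compacts
\[
\sum_d \approx \frac{x}{\mu}\int x^{\theta(1-\ga)}g(s)^\theta\slv(x^{1-\ga})\,ds .
\]
The total power of $x$ is $x^{1-\theta(\ga-1)}=x^{-\gb}$, as required. For the constant, write $g(s)^\theta=\mu^{\theta\ga}(s-1)^\theta s^{-\theta\ga}$ and substitute $s=1/v$:
\[
\int_1^\infty (s-1)^\theta s^{-\theta\ga}\,ds=\int_0^1(1-v)^\theta v^{\theta\ga-\theta-2}\,dv=\mathrm{B}(\theta+1,\theta(\ga-1)-1)=\mathrm{B}(\theta+1,\gb).
\]
Together with the prefactor $\mu^{\theta\ga}/\mu=\mu^{\theta\ga-1}$, this gives exactly~\eqref{eq:rho-asymp}. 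Regular variation with index $-\gb$ follows, because $x\mapsto\slv(x^{1-\ga})$ is slowly varying at infinity.

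The main obstacle is justifying dominated convergence near the two ends, $s\downarrow1$ and $s\to\infty$, where the slowly varying factor is evaluated at arguments far from $x^{1-\ga}$. I will use the Potter bounds \cite[Theorem~1.5.6]{BGT87}. For small $\eps>0$ and $x$ large,
\[
\slv(x^{1-\ga}g)/\slv(x^{1-\ga})\le C\max(g^{\eps},g^{-\eps}),
\]
uniformly over arguments in $(0,1]$ (valid once $x^{1-\ga}g\le t_0$). The domination then reduces to integrability of $g(s)^{\theta\pm\eps}$. Near $s=1$ this holds since $(s-1)^{\theta-\eps}$ is integrable. At infinity the integrand behaves like $s^{-(\theta-\eps)(\ga-1)}$, and $(\theta-\eps)(\ga-1)>2>1$. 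The Riemann-sum error is handled in the same way: for $s$ in a compact set the monotonicity of $F$ brackets each term between nearby values, and the two tails are small uniformly in $x$ by the Potter domination.

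The tail bound~\eqref{eq:rho-far} uses the same domination restricted to $s\ge c\mu$. The limsup ratio is at most
\[
C\int_{c\mu}^\infty s^{-(\theta\mp\eps)(\ga-1)}\,ds\le C'c^{1-(\theta-\eps)(\ga-1)}.
\]
Choosing $\eps$ small relative to the exponent, this gives the stated $Cc^{-\gb}$ up to an $\eps$-loss. To get exactly $c^{-\gb}$, I will use the cruder bound $(\mu d-x)_+d^{-\ga}\le\mu d^{1-\ga}$. Then the sum is at most $\sum_{d\ge cx}F(\mu d^{1-\ga})$, which by Karamata's theorem is $\sim C(cx)^{-\gb}\slv((cx)^{1-\ga})$, and this is $\le C c^{-\gb}\rhox(x)$ asymptotically because $\slv$ is slowly varying.

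Finally, $\rhox(x)\asymp\sum_{d\ge x}p_d$ follows by comparing~\eqref{eq:rho-asymp} with~\eqref{eq:karamata}. Both are of order $x^{-\gb}\slv(x^{1-\ga})$, since $\slv((cx)^{1-\ga})\sim\slv(x^{1-\ga})$.
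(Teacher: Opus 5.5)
Your proposal is correct and follows essentially the same route as the paper: rescale $d$ by $x$, use the uniform convergence theorem on compacts together with Potter bounds to pass from the Riemann sum to $\int g^\theta$, and evaluate that integral as $\mu^{\theta\alpha-1}\mathrm{B}(\theta+1,\beta)$. For~\eqref{eq:rho-far}, the paper restricts the same Riemann-sum limit to $d\ge cx$; in your variables the limit ratio is $\int_{c\mu}^\infty g^\theta\big/\int_1^\infty g^\theta$, and $g(s)^\theta\le\mu^{\theta\alpha}s^{-\theta(\alpha-1)}$ already gives $c^{-\beta}$ with no $\varepsilon$-loss, while your alternative via the crude bound $(\mu d-x)_+d^{-\alpha}\le\mu d^{1-\alpha}$ and Karamata's theorem is an equally valid shortcut.
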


\begin{proof}
	Only $d>x/\mu$ contribute.
	Substituting $d=xs$ and writing $g(s):=(\mu s-1)s^{-\ga}$ for $s>1/\mu$,
	\begin{align*}
		\rhox(x)=\sum_{d>x/\mu}F\big(x^{1-\ga}g(d/x)\big)=x^{-\theta(\ga-1)}\sum_{d>x/\mu}g(d/x)^{\theta}\,\slv\big(x^{1-\ga}g(d/x)\big).
	\end{align*}
	By the uniform convergence theorem for slowly varying functions,
		$\slv(x^{1-\ga}g(s))/\slv(x^{1-\ga})\to1$ uniformly for $s$ in compact subsets of $(1/\mu,\infty)$,
		and Potter's bounds~\cite[Theorem~1.5.6]{BGT87} give, for $0<\gd<\min\{\theta,\theta-1/(\ga-1)\}$ and all large $x$,
		$\slv(x^{1-\ga}g(s))/\slv(x^{1-\ga})\le2\max(g(s)^{\gd},g(s)^{-\gd})$ uniformly in $s$.
	Since $g(s)\le\mu s^{1-\ga}$ and $g(s)\ge c(s-1/\mu)$ near $1/\mu$,
		and $\int_{1/\mu}^\infty g^{\theta\pm\gd}(s)\,ds<\infty$ for small $\gd$ because $g(s)^{\theta}\asymp s^{-(\gb+1)}$ at infinity,
		first restrict the Riemann sum to $1/\mu+h\le d/x\le R$.
On this interval uniform convergence gives the integral limit. Near $1/\mu$,
the dominating powers are bounded by $C(s-1/\mu)^{\theta-\gd}$, whose sum is
$O(h^{\theta-\gd+1}+x^{-1})$. For $s\ge R$, they are bounded by
$Cs^{-(\theta-\gd)(\ga-1)}$, whose integral and mesh sum tend to zero as $R\to\infty$.
Let $x\to\infty$, then $h\downarrow0$ and $R\to\infty$. This justifies the Riemann-sum passage and yields
	\begin{align*}
		\rhox(x)\sim x^{1-\theta(\ga-1)}\slv(x^{1-\ga})\int_{1/\mu}^\infty g(s)^{\theta}\,ds .
	\end{align*}
	The substitution $s=v/\mu$, then $v=1/w$, gives
	\begin{align*}
		\int_{1/\mu}^\infty g(s)^{\theta}\,ds=\mu^{\theta\ga-1}\int_1^\infty(v-1)^{\theta}v^{-\theta\ga}\,dv
		=\mu^{\theta\ga-1}\int_0^1(1-w)^{\theta}w^{\theta\ga-\theta-2}\,dw=\mu^{\theta\ga-1}\mathrm B(\theta+1,\gb),
	\end{align*}
	which is~\eqref{eq:rho-asymp} since $1-\theta(\ga-1)=-\gb$.
	The comparison with $\sum_{d\ge x}p_d$ follows from~\eqref{eq:karamata}.
	The same computation restricted to $d\ge cx$ gives the limit $\int_c^\infty g^{\theta}/\int_{1/\mu}^\infty g^{\theta}\le Cc^{-\gb}$, which is~\eqref{eq:rho-far}.
\end{proof}

\subsection{Independence structure}
For $y>B$, let $\mathcal F_{<y}:=\gs(U,\go_e:d_e<y)$.
The \emph{$y$-hubs} form the set $\Hloc\cap\cU_{<y}$,
	where $\cU_{<y}$ consists of boundaries crossed by no edge of $\cD_{<y}$.
This set is $\mathcal F_{<y}$-measurable and contains all original hubs.
Both comparison metrics $T_-^y$ and $T_+^y$ of Lemma~\ref{lem:basic} have these points as passage-time cut points.
For the lower metric, an artificial long edge costs $C_0(d+2B)$,
	which is at least the envelope-route bound through a $y$-hub;
	the other cases follow from the original portal and crossing arguments.

Write $\pr^{0,y}$ for the Palm law conditioned on a $y$-hub at zero, and $L^{<y}$ for its gap.
For $\diamond\in\{-,+\}$, let
\[
	G_\diamond^y:=T_\diamond^y(0,L^{<y}),\quad
	Z_\diamond^y:=G_\diamond^y-\mu_\diamond^y L^{<y}.
\]
The marked $y$-blocks are i.i.d. under $\pr^{0,y}$ by the same cut factorization:
	all random crossing constraints concern edges of length less than $y$, while longer edges have deterministic times.
Moreover $0\le G_\diamond^y\le C_0L^{<y}$ and $\E^{0,y}Z_\diamond^y=0$.
Define $H_0^{<y}:=\{0\text{ is a }y\text{-hub}\}$.
Intensity and conditioning give
\begin{align}\label{eq:y-mono}
	B\le\E^{0,y}L^{<y}\le\E^0L,\quad
	\pr^{0,y}(L^{<y}>u)
	\le\frac{\pr(\Hloc\cap\cU_{<y}\cap(0,u]=\eset)}{\pr(H_0)}.
\end{align}
There is no analogous upper bound for the original Palm gap by the no-$y$-hub event.
The entire $\mathcal F_{<y}$ field, including both background rewards, is independent of $\mathcal D_{\ge y}$.
The three sets $\Hloc,\cD_{<y},\cD_{\ge y}$ are also mutually independent,
	since the envelopes and portals use edges of length at most $B$.

Two identities for the stationary hub process are used repeatedly.
First, for a non-negative function $f$ of the augmented environment (edge weights and grid phase) and its translates $f_v:=f\circ\gt_v$,
\begin{align}\label{eq:cycle}
	\E^0\Big(\sum_{v\in[0,L)}f_v\Big)=\E^0 L\cdot \E(f)\text{ (cycle formula~\cite{Thorisson00})}.
\end{align}
Here $\gt_v$ translates both the edge field and the grid.
Randomizing the phase is essential when $f$ involves the containing hub block.

Second, the \emph{first-big-edge decomposition}.
Let $y>B$ and suppose that $0$ is a hub and that some edge of $\cD_{\ge y}$ lies in the block $[0,L]$.
Let $e_1=\{u_1,v_1\}$ be the one with the smallest left endpoint, breaking ties by its right endpoint.
Since no edge of $\cD_{\ge y}$ has left endpoint in $[0,u_1)$ and none crosses $0$,
	every point of $\Hloc\cap(0,u_1)$ is covered by an edge of $\cD_{<y}$.
Hence $0$ is a $y$-hub whose $y$-block reaches at least to $u_1$.
This event is a function of $\Hloc$ and $\cD_{<y}$ and is independent of $\{e_1\in\cD\}$.
For any event $E\in\mathcal F_{<y}$,
\begin{align}\label{eq:first-big}
	\pr\big(H_0,\ E,\ \text{some edge of }\cD_{\ge y}\text{ lies in }[0,L]\big)
	\le\sum_{u_1\ge0}\pr\big(0\text{ is a }y\text{-hub},\ L^{<y}\ge u_1,\ E\big)\sum_{d\ge y}p_d ,
\end{align}
and, taking $E$ trivial and using~\eqref{eq:palm-stat} for the $y$-hub process together with~\eqref{eq:y-mono},
\begin{align}\label{eq:first-big-sum}
	\sum_{u_1\ge0}\pr\big(0\text{ is a }y\text{-hub},\ L^{<y}\ge u_1\big)=\pr(H_0^{<y})\big(1+\E^{0,y}(L^{<y})\big)=\pr(H_0^{<y})+1\le2 .
\end{align}

\begin{lem}[Two long edges]\label{lem:two-long}
Put $s_y:=\sum_{d\ge y}p_d$. For $y>B$ and $r\ge1$,
\[
 \pr^0\big(L\le r,\ \#\{e\in\cD_{\ge y}:e\subset[0,L]\}\ge2\big)
 \le C(r+1)s_y^2.
\]
\end{lem}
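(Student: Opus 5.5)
We bound the probability by a union over pairs of long forbidden edges lying inside $[0,r]$. On $\{L\le r\}$, every edge of the block lies in $[0,r]$. It is therefore enough to bound
\[
\pr^0\big(\exists\, e\ne e'\in\cD_{\ge y},\ e,e'\subset[0,L],\ L\le r\big).
\]
The obstacle is that conditioning on $H_0$ is not independent of $\cD_{\ge y}$: the hub event requires that no forbidden edge crosses $0$. That condition only involves edges crossing $0$, however, and edges contained in $[0,\infty)$ lie in $\cE_0^+$. This lets us drop the conditioning.

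\textbf{Removing the conditioning.} Write $\pr^0(\cdot)=\pr(\cdot\cap H_0)/\pr(H_0)$. On $\{L\le r\}$ the edges $e,e'$ lie in $[0,r]$. So the event is contained in
\[
H_0\cap\{\exists\, e\ne e'\in\cD_{\ge y}:\ e,e'\subset[0,r]\}.
\]
We drop $H_0$ entirely and bound $\pr(H_0)^{-1}\le C$, since $\pr(H_0)$ is a fixed positive vertex intensity. This is crude but legitimate, because $\cD_{\ge y}$ restricted to edges inside $[0,r]$ is still a Bernoulli field with independent edges.

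\textbf{Counting pairs.} The problem is that a naive union over all pairs inside $[0,r]$ gives $\big(\sum_{u\le r}\sum_{d\ge y}p_d\big)^2\approx r^2s_y^2$, which is too large by a factor $r$. To gain it, we must use the block structure and not just the window. We follow the first-big-edge decomposition~\eqref{eq:first-big}. Let $e_1=\{u_1,v_1\}$ be the first long forbidden edge in the block. Then $0$ is a $y$-hub with $L^{<y}\ge u_1$, an event in $\mathcal F_{<y}$ and independent of $\cD_{\ge y}$. The second edge $e_2$ has some left endpoint $u_2\in[0,r]$ and length $d\ge y$, which costs a factor at most $\sum_{u_2\le r}\sum_d p_d\le (r+1)s_y$. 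We then apply~\eqref{eq:first-big} with $E$ trivial, and with the pair $(e_1,e_2)$ handled by independence of distinct edges of $\cD_{\ge y}$ (when $e_2\ne e_1$, the indicators are independent Bernoulli variables). Using~\eqref{eq:first-big-sum}, this gives
\[
\pr(H_0,\cdots)\le \sum_{u_1}\pr\big(0\text{ is a }y\text{-hub},\,L^{<y}\ge u_1\big)\,s_y\cdot (r+1)s_y\le 2(r+1)s_y^2 .
\]
Dividing by $\pr(H_0)$ gives the claim.

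\textbf{Main obstacle.} The delicate point is justifying the union bound over $e_1$ jointly with the independence claim. The event "$e_1$ is the first long forbidden edge" involves the absence of other long forbidden edges with smaller left endpoint, and $e_2$ must be distinct from $e_1$. We handle this by bounding the indicator of "first" by the weaker event that there is no long forbidden edge crossing $0$ and no $y$-hub in $(0,u_1)$ other than $0$; after dropping the non-crossing of $0$, this is $\mathcal F_{<y}$-measurable. We then sum over ordered pairs $(e_1,e_2)$ of distinct edges, so that $\pr(e_1,e_2\in\cD)=p_{d_1}p_{d_2}$ exactly by the product structure.
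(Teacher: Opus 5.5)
Your proposal is correct and is essentially the paper's argument. You use the first long forbidden edge to extract the $\mathcal F_{<y}$-measurable background event $\{0\text{ is a }y\text{-hub},\ L^{<y}\ge u_1\}$, use its independence from two distinct long-edge indicators in the unconditioned product space, and sum via \eqref{eq:first-big-sum} to get $2\pr(H_0)^{-1}(r+1)s_y^2$. The one inaccuracy is the opening ``drop $H_0$ entirely'': $H_0$ may only be dropped after it has been used to produce the $y$-hub background event, which is what your later paragraphs in fact do.
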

\begin{proof}
Choose the first long forbidden edge $e=\{u,u+d\}$ in the block.
Its existence implies the background event
$B_u^y:=\{0\hbox{ is a }y\hbox{-hub},\ L^{<y}\ge u\}$.
If $L\le r$, the second edge has left endpoint in $[0,r]\cap\dZ$.
For each fixed pair of distinct edges, $B_u^y$ is independent of both edge indicators.
Relax the requirement that $e$ is actually first and sum over the two edges.
The resulting bound is
$\pr(H_0)^{-1}\sum_{u\ge0}\pr(B_u^y)s_y(r+1)s_y$,
which is at most $2\pr(H_0)^{-1}(r+1)s_y^2$ by~\eqref{eq:first-big-sum}.
This is an unconditional product-space calculation before conditioning on $H_0$;
independence is not being asserted under the original Palm law.
\end{proof}

\subsection{The tail of the block length}
\begin{lem}[Tail of the block length]\label{lem:L-tail}
	There is $\eta_0\in(0,1)$ such that for every $\eta\in(0,\eta_0]$ there are $C=C(\eta)$ and $\gd_0>0$ with, for all $u\ge1$,
	\begin{align}
		\pr^0(L>u)                                                                                     & \le C\,u^{-\gb}\slv(u^{1-\ga}),\label{eq:L-tail} \\
		\pr^0\big(L>u,\ \text{no edge of }\cD_{\ge\eta u}\text{ lies in }[0,L]\big) & \le C\,u^{-\gb-\gd_0}.\label{eq:L-tail-nobig}
	\end{align}
For the original Palm gap there is also $c>0$ such that
$\pr^0(L>u)\ge c\,u^{-\gb}\slv(u^{1-\ga})$ for all sufficiently large $u$.
	Only the two upper bounds~\eqref{eq:L-tail} and~\eqref{eq:L-tail-nobig} are asserted for $L^{<y}$ under $\pr^{0,y}$, uniformly for $y>B$, with the truncated field $\cD_{<y}$ in the no-big-edge event. The lower bound is for the original gap only.
\end{lem}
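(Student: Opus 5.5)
The plan is to pass to the stationary no-hub probability. By~\eqref{eq:palm-stat} we have $\pr^0(L>u)\le \E^0 L\,\pr(\cH\cap(0,u/2]=\eset)\cdot C/u$ up to the monotonicity of $v\mapsto\E^0(L-v+1)_+$. Precisely, $\pr(\cH\cap(0,v)=\eset)\ge (u/2)\pr^0(L>u)/\E^0L$ for $v\le u/2$, so it suffices to show $\pr(\cH\cap(0,u]=\eset)\le C u^{1-\gb}\slv(u^{1-\ga})$, together with the refined bound $Cu^{1-\gb-\gd_0}$ on the event that no edge of $\cD_{\ge \eta u}$ covers the relevant boundaries. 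The same reduction applies to $L^{<y}$ via~\eqref{eq:y-mono}, and every estimate below will be uniform in $y$, since $\cD_{<y}\subset\cD$ only removes coverage.

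\textbf{Step 1: the no-big-edge bound~\eqref{eq:L-tail-nobig}.} I repeat the proof of Lemma~\ref{lem:second-moment}, with $D:=\floor{\eta u}$ playing the role of $\floor{u^\kappa}$. On the event $E_u$ that every interval of length $u^{\kappa}$ in $(0,u]$ meets $\Hloc$, I choose $K'\asymp u^{1-\kappa}$ points of $\Hloc$ spaced more than $\eta u$ apart. This uses~\eqref{eq:loc-gap} with $r$ close to $(p-2)/2$, which is large by~\eqref{eq:p-stable}, so $\pr(E_u^c)\le u^{-N}$. If no hub lies in $(0,u]$ and no forbidden edge of length $\ge\eta u$ is present, then each point is covered by a forbidden edge of length in $[u^{\kappa},\eta u)$ or of length $<u^\kappa$. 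Coverings by short edges are independent across points and each fails with probability at most $1/8$. Coverings by intermediate edges contribute at most $K'\sum_{d\ge u^{\kappa}}dp_d\asymp u^{1-\kappa}u^{\kappa(1-\gb)}$.

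This is too weak, so I instead handle the intermediate lengths via the count of distinct covering edges. A single edge of length $<\eta u$ covers at most $\eta u/u^{\kappa}$ of the chosen points, so covering all $K'$ points requires at least $1/\eta$ distinct edges of length $\ge u^\kappa$ inside an interval of length $2u$. By a Poisson-type (BK) bound this has probability at most $(Cu\sum_{d\ge u^\kappa}p_d)^{\lceil 1/\eta\rceil}\le u^{(1-\kappa\gb)/\eta}$. For $\kappa$ near $1$ and $\eta\le\eta_0$ small this is $\le u^{1-\gb-\gd_0}$, which gives~\eqref{eq:L-tail-nobig}. The same argument with $\cD_{<y}$ is uniform in $y$.

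\textbf{Step 2: the upper bound~\eqref{eq:L-tail}.} I split on whether an edge of $\cD_{\ge\eta u}$ lies in $[0,L]$. The complementary case is Step 1. On the event that such an edge lies in $[0,L]$, I apply the first-big-edge decomposition~\eqref{eq:first-big} and~\eqref{eq:first-big-sum} with $y=\eta u$. Conditionally on the first long edge at $u_1$, the block length exceeding $u$ forces either $L^{<y}\ge u/3$ (iterating Step 1 for the $y$-background, or using $\sum_{u_1}$ restricted to $u_1\ge u/3$, which costs the tail of $L^{<y}$), or the long edge itself being very long, or a further $y$-block after $v_1$ of length $\ge u/3$. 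Each of these contributes $\lesssim \sum_{d\ge \eta u}p_d\asymp u^{-\gb}\slv(u^{1-\ga})$ by~\eqref{eq:karamata}. The uniformity of Step 1 in $y$ is exactly what makes this bootstrap close.

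\textbf{Step 3: the lower bound.} Take an edge $\{0', d\}$ with $d\in[2u,3u]$ and left endpoint near a hub, and require it forbidden. By Lemma~\ref{lem:cut}, positivity of the local hub factors, and independence of the crossing edge from the remaining factors (with Harris' inequality for the rest), I get $\pr^0(L>u)\ge c\sum_{d\in[2u,3u]}p_d\ge cu^{-\gb}\slv(u^{1-\ga})$, since every boundary in between is covered. The main obstacle is Step 1, namely obtaining a power strictly better than $u^{-\gb}$ uniformly in $y$. I expect the bookkeeping of how many intermediate-length edges are needed to cover well-separated locally good points to be the delicate part.
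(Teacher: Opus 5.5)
Your architecture matches the paper's: local density of $\Hloc$ from \eqref{eq:loc-gap}, a short/medium covering count with $\eta$ small for \eqref{eq:L-tail-nobig}, the first-big-edge bound for \eqref{eq:L-tail}, and a single forbidden long edge for the lower bound. However, the key counting step in Step~1 is assembled incorrectly and, as written, does not work.

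First, the spacing must be $D=\lfloor u^\kappa\rfloor$. There is no room for $K'\asymp u^{1-\kappa}$ points spaced $\eta u$ apart in $(0,u]$.

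Second, the dichotomy ``all $K'$ points are covered by medium edges'' versus ``not'' is useless. Points can be covered by a mixture of medium and short edges. If only a few points escape the medium edges, you gain only a bounded factor from the short edges covering them.

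What is needed is the paper's quantitative split. Let $M$ be the number of forbidden edges with left endpoint in $[0,u)$ and length in $[D,\eta u)$, so $\E M\le u\sum_{d\ge D}p_d\le Cu^{1-\kappa\beta+o(1)}$. Take $\kappa\in(1/\beta,1)$ and fix $m$ with $m(\kappa\beta-1)>\beta+1$, so that $\pr(M\ge m)\le(\E M)^m/m!$ is negligible. Only then choose $\eta_0$ so small that $m(\eta_0u/D+2)\le K'/2$; this is possible since $K'\ge cu^{1-\kappa}$. On $\{M<m\}$ the medium edges cover at most $K'/2$ of the $m_i$, so at least $K'/2$ of them must each be covered by a short edge. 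A short edge covers at most one $m_i$, so these events are independent given the medium edges and $\Hloc$. Each has probability at most $c_B\le1/8$, which yields the factor $8^{-K'/2}$.

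Also, $\pr(E_u^c)$ is only $O(u^{1-\kappa r})$ with $r<(p-2)/2$, not $u^{-N}$. This suffices because \eqref{eq:p-stable} allows $\kappa r>\beta+2$.

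The remaining points need tidying rather than new ideas.

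Your inequality from \eqref{eq:palm-stat} is correct, but it only transfers the event $\{L>u\}$, not the refined event of \eqref{eq:L-tail-nobig}, which refers to the Palm block $[0,L]$. Moreover, your Step~2 bounds a Palm quantity rather than the stationary no-hub probability you reduced to. It is simpler to run the covering argument directly on $\{H_0,\,L>u,\ \text{no edge of }\cD_{\ge\eta u}\text{ in }[0,L]\}$. On $H_0$ no forbidden edge crosses $0$, so all covering edges have left endpoint in $[0,u)$. Then drop $H_0$ and divide by $\pr(H_0)$, or by $\pr(H_0^{<y})\ge\pr(H_0)$ for $y$-hubs.

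In Step~2 no case analysis and no bootstrap are needed. \eqref{eq:first-big} with $E$ trivial, together with \eqref{eq:first-big-sum}, gives $\pr(H_0,\ \text{some edge of }\cD_{\ge\eta u}\text{ in }[0,L])\le2\sum_{d\ge\eta u}p_d$ outright. Your three cases do not exhaust the event anyway, since further long edges beyond $v_1$ can extend the block.

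For the lower bound, take the edges $\{0,d\}$, $d>u$, with left endpoint exactly at the hub. They lie in no cell and do not cross $0$, so they are independent of $H_0$ and no Harris inequality is needed. Any forbidden one excludes all hubs in $(0,d)$. A left endpoint merely near $0$ would not exclude hubs between $0$ and that endpoint.
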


\begin{proof}
	\smallskip\noindent\textbf{Step 1: local density.} Fix $\kappa\in(\max\{1/\gb,(\gb+2)/(\gb+3)\},1)$ and $r\in((\gb+2)/\kappa,(p-2)/2)$,
		which is a non-empty interval by~\eqref{eq:p-stable}, and let $D:=\floor{u^{\kappa}}$.
	Let $E_u$ be the event that every interval of length $D$ contained in $(0,u]$ contains a point of $\Hloc$.
	By Lemma~\ref{lem:loc} and a union bound over $O(u/D+1)$ deterministic intervals of length $D/2$, each sliding interval of length $D$ containing one of these,
		$\pr(E_u^c)\le C(u/D+1)D^{1-r}\le Cu^{1-\kappa r}\le Cu^{-\gb-1}$.

	\smallskip\noindent\textbf{Step 2: no big edge.} On $\{H_0,\ L>u\}$ every point of $\Hloc\cap(0,u)$ is covered by a forbidden edge with left endpoint in $[0,u)$.
	On $E_u$ choose points $m_1<\cdots<m_{K'}$ of $\Hloc\cap(0,u]$ at mutual distances greater than $D$ with $K'\ge cu^{1-\kappa}$,
		as in the proof of Lemma~\ref{lem:second-moment}.
	Suppose that no edge of $\cD_{\ge\eta u}$ lies in $[0,L]$.
	Call a forbidden edge with left endpoint in $[0,u)$ \emph{medium} if its length lies in $[D,\eta u)$ and \emph{short} if its length is less than $D$.

	Let $M$ be the number of medium edges.
	It is a sum of independent Bernoulli variables with mean $\E(M)\le u\sum_{d\ge D}p_d\le Cu^{1-\kappa\gb+o(1)}$,
		and for every integer $m\ge1$, $\pr(M\ge m)\le\E(M)^m/m!\le Cu^{-m(\kappa\gb-1)+o(1)}$.
	Fix $m$ with $m(\kappa\gb-1)>\gb+1$ and then $\eta_0$ so small that $m\,(\eta_0u/D+2)\le K'/2$ for all $u\ge u_0$,
		which is possible because $\eta_0u/D+2\le2\eta_0u^{1-\kappa}+2$ and $K'\ge cu^{1-\kappa}$.
	Small $u$ are absorbed into the constant $C(\eta)$.
	If $M<m$, the medium edges cover at most $K'/2$ of the points $m_i$, and the remaining points,
		which are determined by the medium edges and $\Hloc$, must each be covered by a short edge.
	Since a short edge covers at most one of the $m_i$,
		these coverage events are independent given the medium edges, and each has probability at most $c_B\le1/8$.
	Hence
	\begin{align*}
		\pr\big(H_0,\ L>u,\ \text{no edge of }\cD_{\ge\eta u}\text{ in }[0,L]\big)\le\pr(E_u^c)+\pr(M\ge m)+8^{-K'/2}\le Cu^{-\gb-\gd_0}.
	\end{align*}
	Dividing by $\pr(H_0)$ proves~\eqref{eq:L-tail-nobig}.

	\smallskip\noindent\textbf{Step 3: first big edge.} On $\{H_0,L>u\}$ minus the event of Step 2 there is an edge of $\cD_{\ge\eta u}$ in $[0,L]$.
	By~\eqref{eq:first-big} with $y=\eta u$ and $E$ trivial, and~\eqref{eq:first-big-sum},
	\begin{align*}
		\pr(H_0,L>u)\le Cu^{-\gb-\gd_0}+2\sum_{d\ge\eta u}p_d\le Cu^{-\gb-\gd_0}+C\eta^{-\gb}u^{-\gb}\slv(u^{1-\ga})
	\end{align*}
	by~\eqref{eq:karamata}.
	Dividing by $\pr(H_0)$ proves~\eqref{eq:L-tail}.
For the lower bound, the long edges $\{0,d\}$, $d>B$, are mutually independent and independent of the hub event at zero: they belong to no cell, and none strictly crosses zero.
If one of them with $d>u$ is forbidden, it excludes every hub in $(0,d)$, so $L\ge d>u$.
Consequently
\[
 \pr^0(L>u)\ge1-\prod_{d>\max(u,B)}(1-p_d)
 \sim\sum_{d>u}p_d
 \sim\frac{C_0^\theta}{\gb}u^{-\gb}\slv(u^{1-\ga}).
\]
This proves the claimed lower bound, but does not identify an exact asymptotic constant for the gap tail.
	The upper-bound argument applies verbatim to the $y$-hub process,
		whose forbidden edges form a sub-family of $\cD$, with $\pr(H_0^{<y})\ge\pr(H_0)$ in the last step.
\end{proof}

\begin{rem}[Uniformity in the truncation level]
The constants in the two upper bounds of Lemma~\ref{lem:L-tail} can be chosen independently of $y$.
The local field and its density estimate are unchanged. Medium- and short-edge coverage uses a
subfield of the original Bernoulli field, so its probabilities only decrease.
The first-edge sum is at most two for every truncation level, and
$\pr(H_0^{<y})\ge\pr(H_0)>0$ bounds the conditioning denominator uniformly.
These facts justify applying the estimates with $y=\eta x$ growing with $x$.
\end{rem}

\subsection{Deviations without a big edge}
The following bound controls the two background metrics over a linear number of $y$-blocks.
\begin{lem}\label{lem:no-big-dev}
	Let $R\ge1$, $\eps\in(0,1]$, and set
	\begin{align}\label{eq:eta-star}
		\eta_*:=\frac{\eta_0\eps}{16C_0(1+C_0)}.
	\end{align}
	Fix $\eta\in(0,\eta_*]$, define $y=\eta x$,
		and write $S_{\diamond,k}^y:=\sum_{j<k}Z_{\diamond,j}^y$ for $\diamond\in\{-,+\}$.
	Under $\pr^{0,y}$,
	\begin{align}\label{eq:no-big-dev}
		q(x)&:=\max_{\diamond\in\{-,+\}}\pr^{0,y}\Big(\max_{k\le Rx}|S_{\diamond,k}^y|\ge\eps x/4\Big)\longrightarrow0,\notag\\
		q'(x)&:=\pr^{0,y}\Big(\max_{j\le Rx}L_j^{<y}\ge\eps x/(8C_0)\Big)\longrightarrow0.
	\end{align}
\end{lem}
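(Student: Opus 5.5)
We treat $q'(x)$ first, since it is the simpler estimate and its bound is reused in the second part. By a union bound over $j\le Rx$ and the i.i.d.\ structure of the marked $y$-blocks under $\pr^{0,y}$,
\[
q'(x)\le (Rx+1)\,\pr^{0,y}\big(L^{<y}\ge \eps x/(8C_0)\big).
\]
We apply Lemma~\ref{lem:L-tail} to the $y$-hub process with $u=\eps x/(8C_0)$ and $\eta$ replaced by $\eta':=8C_0\eta/\eps$. The constant $\eta_*$ is chosen so that $\eta'\le\eta_0/(2(1+C_0))\le\eta_0$, hence $y=\eta x=\eta' u$. The crucial point is that under $\pr^{0,y}$ no edge of $\cD_{\ge y}$ is present in the truncated field at all: the $y$-hub gap is defined using $\cD_{<y}$ only. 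So the event $\{L^{<y}>u\}$ coincides with the no-big-edge event of~\eqref{eq:L-tail-nobig} at level $\eta' u$. That bound, which is uniform in $y$ by the Remark after Lemma~\ref{lem:L-tail}, gives $\pr^{0,y}(L^{<y}>u)\le Cu^{-\gb-\gd_0}$. Hence $q'(x)\le C R x\cdot x^{-\gb-\gd_0}\to0$, because $\gb>1$.

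For $q(x)$ we use a truncation plus a maximal inequality. Under $\pr^{0,y}$ the rewards $Z^y_{\diamond,j}$ are i.i.d.\ and centered, and they satisfy $|Z^y_{\diamond,j}|\le (C_0+\mu^y_\diamond)L^{<y}_j\le 2C_0L^{<y}_j$ by Lemma~\ref{lem:basic}. We truncate at level $\lambda x$ with a small $\lambda$, setting $\bar Z_j:=Z_j\1\{L_j^{<y}\le \lambda x\}$.

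\textbf{Truncated part.} By the same tail bound as above, the probability that some $j\le Rx$ is truncated is $O(x^{1-\gb-\gd_0})\to 0$. The truncated mean satisfies
\[
|\E\bar Z|=|\E(Z;L>\lambda x)|\le 2C_0\E(L;L>\lambda x)\le Cx^{1-\gb-\gd_0},
\]
again by the no-big-edge tail. Multiplying by $Rx$, the total drift is $O(x^{2-\gb-\gd_0})$. This is $o(x)$ since $\gb>1$.

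\textbf{Fluctuations.} We apply Kolmogorov's (or Doob's) maximal inequality to the centered truncated sums, which gives
\[
\pr\Big(\max_{k\le Rx}\big|\textstyle\sum_{j<k}(\bar Z_j-\E\bar Z)\big|\ge\eps x/8\Big)\le \frac{C R x\,\E(\bar Z^2)}{\eps^2x^2}.
\]
Using $\E(\bar Z^2)\le C\E\big((L^{<y})^2;L^{<y}\le\lambda x\big)$ and integrating the tail bound $Cu^{-\gb-\gd_0}$, we get $\E(\bar Z^2)\le C\big(1+x^{2-\gb-\gd_0}\big)$. The right-hand side is therefore $O\big(x^{-1}+x^{1-\gb-\gd_0}\big)\to0$.

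The tail bound is needed only at scales $u\asymp\lambda x$ and below. At those scales we use~\eqref{eq:L-tail-nobig} with an $\eta$-parameter $y/u$. When $u$ is small relative to $y$, that parameter exceeds $\eta_0$, and Lemma~\ref{lem:L-tail} does not apply directly. This step is the main obstacle. To handle it, we use the remark that on the truncated field the no-big-edge event holds at every level $\eta u\ge y$. For $u\le y/\eta_0$ we fall back on~\eqref{eq:L-tail}, which gives $Cu^{-\gb}\slv(u^{1-\ga})$. The second moment up to $\lambda x$ then splits into two pieces. The first is $\sum_{u\le y/\eta_0}u\cdot u^{-\gb+o(1)}=O(x^{2-\gb+o(1)})$, whose contribution $x^{1-\gb+o(1)}\cdot R/\eps^2$ still tends to $0$. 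The second is the range $y/\eta_0<u\le\lambda x$, which is handled by the $\gd_0$-improved bound. The first-moment (drift) estimate is split in the same way, and its contribution is $O(x^{2-\gb+o(1)})$, which is $o(x)$. Both metrics $\diamond=\pm$ are treated identically, and the maximum over $\diamond$ is harmless.
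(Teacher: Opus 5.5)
Your proof is correct and follows essentially the paper's route: a union bound for $q'(x)$, and for $q(x)$ a truncation, control of the accumulated drift, and Kolmogorov's maximal inequality for the i.i.d.\ centered truncated $y$-block rewards, all using the $y$-uniform tail bounds of Lemma~\ref{lem:L-tail}. The detour through the $\gd_0$-improved bound~\eqref{eq:L-tail-nobig} is unnecessary, and your intermediate claim $\E(\bar Z^2)\le C(1+x^{2-\gb-\gd_0})$ is unjustified at scales $u<y/\eta_0$, as you notice yourself. The plain bound~\eqref{eq:L-tail} at every scale already makes the truncation and maximal-inequality probabilities $O(x^{1-\gb}\slv(x^{1-\ga}))$ and the total drift $O(x^{2-\gb}\slv(x^{1-\ga}))=o(x)$ because $\gb\in(1,2)$, and this is exactly what the paper uses.
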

\begin{proof}
	Uniformly in $y>B$, Lemma~\ref{lem:L-tail} and $|Z_\diamond^y|\le2C_0L^{<y}$ give, for all sufficiently large $s$,
	\begin{align}\label{eq:Zy-tail}
		\pr^{0,y}(|Z_\diamond^y|>s)+\pr^{0,y}(L^{<y}>s)
		\le Cs^{-\gb}\slv(s^{1-\ga}).
	\end{align}
	Fix a sign $\diamond$ and truncate each reward at absolute value $x$,
		writing $\widetilde Z_j=Z_{\diamond,j}^y\1_{\{|Z_{\diamond,j}^y|\le x\}}$.
	The probability that any of the first $\floor{Rx}+1$ rewards is changed is at most $CRx^{1-\gb}\slv(x^{1-\ga})=o(1)$.
	Since the untruncated rewards are centered, integration of~\eqref{eq:Zy-tail} gives
	\[
		|\E^{0,y}\widetilde Z_j|\le Cx^{1-\gb}\slv(x^{1-\ga}),\quad
		\var^{0,y}(\widetilde Z_j)\le Cx^{2-\gb}\slv(x^{1-\ga}).
	\]
	Thus the accumulated mean over $Rx$ terms is $o(x)$.
	Kolmogorov's maximal inequality for the independent centered truncated rewards bounds a deviation of size $\eps x/8$ by $C_{R,\eps}x^{1-\gb}\slv(x^{1-\ga})=o(1)$.
	This proves the assertion for either sign, uniformly in $y$.
	A union bound using the gap term in~\eqref{eq:Zy-tail} proves the assertion for $q'(x)$.
\end{proof}

\subsection{Tail equivalence for the block reward}
\begin{prop}[Tail equivalence]\label{prop:tail-equiv}
	Let $Z=G-\mu L$ be the reward of a typical block.
	Then, as $x\to\infty$,
	\begin{align}
		\pr^0(Z<-x) & \sim\E^0(L)\,\rhox(x),\label{eq:Z-left} \\
		\pr^0(Z>x)  & =o\big(\rhox(x)\big).\label{eq:Z-right}
	\end{align}
	In particular $Z$ has a regularly varying left tail with index $-\gb$,
		$\E^0(\abs{Z}^q)<\infty$ for $q<\gb$, and $\E^0(Z)=0$.
\end{prop}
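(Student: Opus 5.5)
Fix $\eps\in(0,1]$ and $\eta\in(0,\eta_*]$ as in~\eqref{eq:eta-star}, and put $y=\eta x$. The proof splits the block according to its big forbidden edges. By~\eqref{eq:L-tail-nobig} with $u=\eps x/(8C_0)$, the event that $L>\eps x/(8C_0)$ and no edge of $\cD_{\ge y}$ lies in $[0,L]$ has probability $O(x^{-\gb-\gd_0})=o(\rhox(x))$. If $L\le\eps x/(8C_0)$ then $|Z|\le(C_0+\mu)L<x$, so such blocks give no contribution. By Lemma~\ref{lem:two-long} with $r=Rx$, together with~\eqref{eq:L-tail} for $L>Rx$, blocks containing at least two edges of $\cD_{\ge y}$ contribute
\[
O\bigl(x\,s_y^2\bigr)+O\bigl(R^{-\gb}\rhox(x)\bigr),
\]
and the first term is $O(x^{1-2\gb}\slv^2)=o(\rhox(x))$ because $\gb>1$. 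It therefore remains to treat blocks with $L\le Rx$ containing exactly one big forbidden edge $e_1=\{u_1,v_1\}$, where $v_1\le L\le Rx$.

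On this event every path in the block either uses $e_1$ or avoids all edges of length $\ge y$. Hence
\[
\min\bigl(G^{y}_{-},\,\widehat T(0,u_1)+t_{e_1}+\widehat T(v_1,L)\bigr)\le G\le \dots,
\]
with lower bound $T^y_-$-type and upper bound $T^y_+$-type background passage times, as in Figure~\ref{fig:bigjump}. The block $[0,L]$ decomposes into $y$-blocks, at most $Rx$ of them, whose rewards are the i.i.d.\ centered $Z^y_{\diamond,j}$. The pieces $[0,u_1]$ and $[v_1,L]$ are unions of such $y$-blocks up to two partial $y$-blocks, each of length at most $\eps x/(8C_0)$ outside an event of probability $q'(x)$. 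Lemma~\ref{lem:no-big-dev} then gives, outside probability $q(x)+q'(x)$, that
\[
\Bigl|\widehat T(0,u_1)-\mu u_1\Bigr|+\Bigl|\widehat T(v_1,L)-\mu(L-v_1)\Bigr|\le\eps x.
\]
This uses $|\mu^y_\diamond-\mu|\le C\sum_{d\ge y}(dp_d+d^{-7})$ from Lemma~\ref{lem:basic}, which is $o(1)$, so that $|\mu^y_\diamond-\mu|\,Rx=o(x)$. Consequently $Z=-\varsigma_{e_1}+O(\eps x)$ when $e_1$ is used, and $|Z|\le\eps x$ otherwise. This sandwich yields
\[
\{\varsigma_{e_1}\ge(1+\eps)x\}\subset\{Z<-x\}\subset\{\varsigma_{e_1}\ge(1-\eps)x\},
\]
up to negligible events.

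Next the counting step. Independence of $\cD_{\ge y}$ from $\mathcal F_{<y}$ does not directly apply under $\pr^0$, so the count is made by the cycle formula~\eqref{eq:cycle} applied to $f=\ind\{\text{the edge with left endpoint }0\text{ is }x'\text{-saving and lies in the containing hub block}\}$. This gives
\[
\E^0\#\{x'\text{-saving edges in }[0,L]\}=\E^0L\cdot\pr(\text{edge at }0\text{ is }x'\text{-saving and inside its block}).
\]
An $x'$-saving edge is forbidden, and with probability tending to one it does not cross a hub and lies within one block. Moreover, $\E^0$ of this count restricted to $\ge2$ edges is negligible by the preceding steps. Hence the probability of exactly one saving edge is $\sim\E^0L\,\rhox(x')$ for $x'=(1\pm\eps)x$, and for the upper inclusion one restricts $d\le Rx/\mu$ via~\eqref{eq:rho-far}. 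Letting $x\to\infty$, then $R\to\infty$ and $\eps\downarrow0$, and using regular variation $\rhox((1\pm\eps)x)/\rhox(x)\to(1\pm\eps)^{-\gb}$, gives~\eqref{eq:Z-left}. For~\eqref{eq:Z-right}, a positive deviation $Z>x$ cannot be produced by a big edge, since removing an edge only increases the passage time up to the background bound. The event therefore falls into the no-big-edge, two-edge, or $L>Rx$ cases, or into the background deviation of size $x$, all of which are $o(\rhox(x))$ after $R\to\infty$. The moment claims then follow by integrating the tails. Finally, $\E^0Z=0$ follows from $\mu=\E^0G/\E^0L$.

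The main obstacle is the one-edge sandwich. The decomposition of $[0,u_1]$ into $y$-blocks under the original Palm law is not exactly the i.i.d.\ $\pr^{0,y}$ structure, because conditioning on $H_0$ and on $e_1$ biases the background. The approach is to work on the product space before conditioning, as in~\eqref{eq:first-big}. There, the background event lies in $\mathcal F_{<y}$ and is independent of $e_1$, so the probability factorizes as
\[
\sum_{u_1}\pr\bigl(y\text{-hub at }0,\ \text{background deviation}\bigr)\,p(\text{edge}),
\]
and~\eqref{eq:first-big-sum} together with $q(x)\to0$ makes the bad part $o(\rhox(x))$.
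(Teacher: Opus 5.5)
Your outline follows the paper's architecture. You split by the edges of $\cD_{\ge y}$ with $y=\eta x$, and use \eqref{eq:L-tail-nobig} when there is no big edge and Lemma~\ref{lem:two-long} when there are two. The background is handled by the first-big factorization together with Lemma~\ref{lem:no-big-dev}, and the count by the cycle formula. Your upper bound in \eqref{eq:Z-left} and your argument for \eqref{eq:Z-right} are essentially the paper's events (a)--(e) and its $T_+^y$ argument. For the lower bound you obtain $Z\approx-\varsigma_{e_1}$ from two-sided background control through $T_+^y$. The paper instead uses only the crude bound $T(0,u)\le C_0(u+B)$ of Proposition~\ref{prop:regeneration}.(ii), after showing via the $y_1$-hub tail that the saving edge sits within $y_0=\eps x/(4C_0)-B$ of both block ends. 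That part of your route is a legitimate alternative.

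The gap is in the counting step. You assert that $\E^0$ of the saving-edge count restricted to $\ge2$ edges is negligible ``by the preceding steps''. Those steps only bound \emph{probabilities}, namely $\pr^0(L>Rx)$ and $\pr^0(\text{at least two edges of }\cD_{\ge y},\ L\le Rx)$. Let $M$ be the number of $(1+\eps)x$-saving edges in the block. To pass from $\E^0M=\E^0(L)\,\rhox((1+\eps)x)$ to a lower bound on $\pr^0(\text{exactly one edge of }\cD_{\ge y},\ L\le Rx,\ \varsigma_{e_1}\ge(1+\eps)x)$, you need the two expectation bounds
\[
\E^0\big((M-1)_+;\,L\le Rx\big)=o(\rhox(x)),\qquad \E^0(M;\,L>Rx)\le\E^0(L)\,\rhox(x)\big(o(1)+CR^{-\gb}\big).
\]
The first can be extracted from the \emph{proof} of Lemma~\ref{lem:two-long}. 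Its pair sum actually bounds the expected number of non-first edges of $\cD_{\ge y}$ in a block of length at most $r$, and $M$ is dominated by the number of such edges because $(1+\eps)x/\mu>y$.

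The second bound does not follow from anything in your proposal: a small probability of $\{L>Rx\}$ says nothing about how many saving edges such long blocks carry. The paper proves it as \eqref{eq:M-long}. It applies the cycle formula to $f\cdot\ind\{\hub_1-\hub_0>Rx\}$ and splits $\{\hub_1-\hub_0>Rx\}\subset\{\hub_1>Rx/2\}\cup\{\hub_0\le-Rx/2\}$. It then uses that the edge $\{0,d\}$ lies in no cell and strictly crosses only points of $(0,d)$. Hence it is independent of $\{\hub_0\le-Rx/2\}$ and, for $d<Rx/4$, of the absence of hubs in $(Rx/4,Rx/2]$. Those absence probabilities are $o(1)$ by \eqref{eq:palm-stat} and \eqref{eq:L-tail}, and \eqref{eq:rho-far} handles $d\ge Rx/4$. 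Your appeal to \eqref{eq:rho-far} covers only this last piece, so as written the lower bound in \eqref{eq:Z-left} is not established.

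Two smaller points. First, $\sum_{u_1}\pr(0\text{ is a }y\text{-hub},\,L^{<y}\ge u_1,\,E_x)\to0$ does not follow from \eqref{eq:first-big-sum} and $q(x)\to0$ alone. You need the split at $u_1=A$ in \eqref{eq:A-split}, with the tail of $L^{<y}$ controlled uniformly in $y$. Second, paths avoiding $e_1$ may still use non-forbidden edges of length $\ge y$; they avoid only $\cD_{\ge y}$. This is why the lower comparison must be $T_-^y$ with deterministic cost $C_0(d+2B)$, and both orientations of $e_1$ must be treated. Also, forbidden edges never cross hubs, so each saving edge lies in its block exactly, not merely with probability tending to one.
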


The idea of the proof is in Figure~\ref{fig:bigjump}.
A large negative reward requires a large saving,
	and Lemma~\ref{lem:no-big-dev} says that savings do not accumulate from many moderate edges.
So a large negative reward is produced by one edge with a saving close to $x$,
	whose expected number per block is exactly $\E^0(L)\rhox(x)$ by the cycle formula.
A large positive reward would require a block that is long and slow at the same time, which is even less likely.

\begin{proof}[Proof of the lower bound in~\eqref{eq:Z-left}]
	Fix $\eps\in(0,1)$ and $J\ge1$.
	Let $M_x$ be the number of $(1+\eps)x$-saving edges in the block $[0,L]$ of the hub $0$.
	Every such edge is forbidden and has length greater than $y_1:=x/\mu$, so it lies in the block of its left endpoint.
	By the cycle formula~\eqref{eq:cycle} applied to $f:=$ the number of $(1+\eps)x$-saving edges with left endpoint at $0$,
	\begin{align}\label{eq:EM}
		\E^0(M_x)=\E^0(L)\,\rhox((1+\eps)x).
	\end{align}

	\smallskip\noindent\textbf{Long blocks.} Apply~\eqref{eq:cycle} to $f$ times the indicator that the block containing $0$ is longer than $Jx$.
	This gives $\E^0(M_x;L>Jx)=\E^0(L)\,\E\big(f;\hub_1-\hub_0>Jx\big)$,
		and $\{\hub_1-\hub_0>Jx\}$ is contained in $\{\hub_1>Jx/2\}\cup\{\hub_0\le-Jx/2\}$.
	Write $f=\sum_{d}f_d$ with $f_d$ the indicator that $\{0,d\}$ is $(1+\eps)x$-saving.
	The edge $\{0,d\}$ strictly crosses no point outside $(0,d)$ and lies in no cell,
		so the event $\{\hub_0\le-Jx/2\}$ and, for $d<Jx/4$, the event $\{\text{no hub in }(Jx/4,Jx/2]\}$ are independent of it.
	Hence, by~\eqref{eq:palm-stat},~\eqref{eq:L-tail}, Karamata's theorem and~\eqref{eq:rho-far},
	\begin{align}\label{eq:M-long}
		\E(f;\hub_1-\hub_0>Jx)&\le2\rhox(x)\,\pr\big(\text{no hub in }(0,Jx/4]\big)+\sum_{d\ge Jx/4}F\big((\mu d-x)_+d^{-\ga}\big)\notag\\
		&\le\rhox(x)\big(o(1)+CJ^{-\gb}\big).
	\end{align}

\smallskip\noindent\textbf{Two saving edges.}
We claim $\E^0(M_x(M_x-1);L\le Jx)=o(\rhox(x))$.
Put $s=s_{y_1}=\sum_{d\ge y_1}p_d$ and $q=\rhox((1+\eps)x)$.
For an ordered pair of saving edges, choose the first forbidden edge of length at least $y_1$.
If it is one member of the pair, the unconditional prefix-background calculation
in Lemma~\ref{lem:two-long} bounds the contribution by $C(Jx+1)sq$.
If it is neither member, sum over three distinct edges; the same calculation gives
$C(Jx+1)^2sq^2$.
Thus
\[
 \E^0(M_x(M_x-1);L\le Jx)
 \le C(Jx+1)sq+C(Jx+1)^2sq^2=o(\rhox(x)),
\]
because $s\asymp\rhox(x)$, $q\asymp\rhox(x)$ and $x\rhox(x)\to0$.
In these sums the prefix event uses only edges shorter than $y_1$, while the distinct long-edge
indicators are independent in the unconditioned environment.
	Since $M_x\le M_x(M_x-1)$ on $\{M_x\ge2\}$,
	\begin{align}\label{eq:M-ge1}
		\pr^0(M_x\ge1)&\ge\E^0(M_x)-\E^0(M_x;M_x\ge2)\notag\\
		&\ge\E^0(L)\rhox((1+\eps)x)-\E^0(M_x(M_x-1);L\le Jx)-\E^0(M_x;L>Jx),
	\end{align}
	and the last two terms are $\E^0(L)\rhox(x)(o(1)+CJ^{-\gb})$ by~\eqref{eq:M-long} and the claim.

	\smallskip\noindent\textbf{Position of the saving edge.} Define $y_0:=\eps x/(4C_0)-B$.
	We show that
	\begin{align}\label{eq:position}
    \begin{split}
    		\pr^0\big(M_x\ge1,\ \text{some $(1+\eps)x$-saving edge $e$ in the block has }u_e>y_0\text{ or }L-v_e>y_0\big)\\
		\quad\le\rhox(x)\big(o(1)+CJ^{-\gb}\big).
    \end{split}
	\end{align}
	The event $\{L>Jx\}$ has $\pr^0$-probability at most $CJ^{-\gb}x^{-\gb}\slv(x^{1-\ga})$ by~\eqref{eq:L-tail}.
	On $\{L\le Jx\}$, if the block contains two edges of $\cD_{\ge y_1}$,
		then by~\eqref{eq:first-big} (the second edge having left endpoint in $[0,Jx)$) the probability is at most $CJx(\sum_{d\ge y_1}p_d)^2=o(\rhox(x))$.
	Otherwise the saving edge $e$ is the unique edge of $\cD_{\ge y_1}$ in the block, hence $e=e_1$,
		and $u_e>y_0$ has probability at most
	\begin{align*}
		\pr(H_0)^{-1}\sum_{u_1>y_0}\pr\big(0\text{ is a }y_1\text{-hub},L^{<y_1}\ge u_1\big)\rhox((1+\eps)x)
		&\le C\rhox(x)\sum_{u_1>y_0}\pr^{0,y_1}(L^{<y_1}\ge u_1)\\
		&\le C\rhox(x)\,y_0^{1-\gb}\slv(y_0^{1-\ga})=o(\rhox(x)),
	\end{align*}
	by~\eqref{eq:first-big}, Lemma~\ref{lem:L-tail} for the $y_1$-hub process and Karamata's theorem.
	The event $\{L-v_e>y_0\}$ has the same probability by reflection symmetry.
	Indeed, the law $\pr$ and the definition of hubs are invariant under $x\mapsto-x$, so $\pr^0$ is invariant as well.
	Reflection maps the block $[0,\hub_1]$ to $[\hub_{-1},0]$ and the event $\{L-v_e>y_0\}$ to the event $\{u_e-\hub_{-1}>y_0\}$ for the left block.
	Under $\pr^0$ the marked blocks $[\hub_{-1},0]$ and $[0,\hub_1]$ have the same law (Proposition~\ref{prop:regeneration}.(i)),
		and the event in question is a function of the marked block,
		because by Lemma~\ref{lem:cut}.(ii) the $y_1$-hubs inside a hub block are determined by the block environment.
	This proves~\eqref{eq:position}.

	\smallskip\noindent\textbf{Conclusion.} On $\{M_x\ge1\}$ minus the event in~\eqref{eq:position},
		pick a $(1+\eps)x$-saving edge $e=\{u,v\}$ with $u\le y_0$ and $L-v\le y_0$.
	By Proposition~\ref{prop:regeneration}.(ii) applied at the hubs $0$ and $L$,
	\begin{align*}
		Z&\le\big[T(0,u)-\mu u\big]+\big[t_e-\mu d_e\big]+\big[T(v,L)-\mu(L-v)\big]\\
		&\le C_0(u+B)-\varsigma_e+C_0(L-v+B)\le\frac{\eps x}2-(1+\eps)x<-x .
	\end{align*}
	Hence, by~\eqref{eq:M-ge1} and~\eqref{eq:position},
		$\pr^0(Z<-x)\ge\E^0(L)\rhox((1+\eps)x)(1-o(1))-\E^0(L)\rhox(x)(o(1)+CJ^{-\gb})$.
	Since $\rhox((1+\eps)x)\sim(1+\eps)^{-\gb}\rhox(x)$, letting $x\to\infty$,
		then $J\to\infty$ and $\eps\downarrow0$ gives $\liminf\pr^0(Z<-x)/(\E^0(L)\rhox(x))\ge1$.
\end{proof}

\begin{proof}[Proof of the upper bound in~\eqref{eq:Z-left}]
	Fix $\eps\in(0,1/4)$ and $K\ge1$, let $\eta_\ast$ be as in~\eqref{eq:eta-star}, fix $\eta\in(0,\eta_\ast]$.
	Define $y:=\eta x$.
	We bound $\pr^0(Z<-x)$ by the $\pr^0$-probabilities of the following events, which cover $\{Z<-x\}$.
	Note that $Z<-x$ forces $L>x/\mu$, because $G\ge0$.
	\begin{enumeratea}
		\item $\mathbf{L>Kx.}$ By~\eqref{eq:L-tail} this has probability at most $CK^{-\gb}x^{-\gb}\slv(x^{1-\ga})\le CK^{-\gb}\E^0 L\rhox(x)$ for large $x$.
		\item \textbf{Some $(1-\eps)x$-saving edge lies in the block.} By the cycle formula as in~\eqref{eq:EM},
				the probability is at most $\E^0 L\rhox((1-\eps)x)\sim(1-\eps)^{-\gb}\E^0 L\rhox(x)$.
		\item \textbf{No edge of $\cD_{\ge y}$ lies in the block,
				and $L>x/\mu$.} Since $\eta\le\eta_\ast\le\eta_0/\mu$,
				this is the event in~\eqref{eq:L-tail-nobig} with $u=x/\mu$, of probability at most $Cx^{-\gb-\gd_0}=o(\rhox(x))$.
		\item \textbf{At least two edges of $\cD_{\ge y}$ lie in the block,
				and $L\le Kx$.} By~\eqref{eq:first-big}, with the second edge having left endpoint in $[0,Kx)$,
				the probability is at most $CKx(\sum_{d\ge y}p_d)^2\le CK\eta^{-2\gb}x\rhox(x)^2=o(\rhox(x))$.
		\item \textbf{Exactly one edge $e_1=\{u_1,v_1\}$ of $\cD_{\ge y}$ lies in the block,
				$\varsigma_{e_1}\le(1-\eps)x$, $L\le Kx$,
				and $Z<-x$.} Every edge other than $e_1$ on a block path has its original cost at least its cost in $T_-^y$:
				short edges are unchanged, and other long edges are non-forbidden.
			Define $\delta_y:=C\sum_{d\ge y}d^{-7}$, so $\mu_-^y\ge\mu-\delta_y$ by Lemma~\ref{lem:basic}.(ii), and $Kx\delta_y=o(x)$.
	\end{enumeratea}

	Write the $y$-hubs in $[0,L]$ as $0=\gamma_0<\cdots<\gamma_N=L$, with $N\le L\le Kx$.
	If a geodesic avoids $e_1$, then
	\[
		G\ge T_-^y(0,L)=S_{-,N}^y+\mu_-^yL,
	\]
	so $Z<-x$ implies $S_{-,N}^y<-x+Kx\delta_y$.
	If the geodesic uses $e_1$ from $w$ to $w'$, with $\{w,w'\}=\{u_1,v_1\}$, its remaining segments avoid $e_1$ and hence
	\begin{align*}
		Z\ge [T_-^y(0,w)-\mu w]+[T_-^y(w',L)-\mu(L-w')]
		+t_{e_1}-\mu(w'-w).
	\end{align*}
	The last term is at least $-(1-\eps)x$, whichever orientation is used.
	Thus at least one bracket must be below $-\eps x/2$.

	For the first bracket, take the last $y$-hub $\gamma_k\le w$.
	Pivotality gives
	\[
		T_-^y(0,w)-\mu w\ge S_{-,k}^y-\mu L_k^{<y}-Kx\delta_y.
	\]
	For sufficiently large $x$, this is at least $-\eps x/2$ unless
	\[
		E_x:=\Big\{\max_{k\le Kx}|S_{-,k}^y|\ge\eps x/4\Big\}
		\cup\Big\{\max_{j\le Kx}L_j^{<y}\ge\eps x/(8C_0)\Big\}
	\]
	occurs, since $\mu\le C_0/2$ and $Kx\delta_y=o(x)$.
	The second bracket is treated by reflection of the marked block, as in the lower bound.

	The event $E_x$ belongs to $\mathcal F_{<y}$,
		so it is independent of the long-edge field and~\eqref{eq:first-big} applies.
	It is enough, up to a factor two, to bound $\pr^0(e_1\text{ exists},E_x)$.
	For $A\ge1$, splitting the sum at $u_1=A$ and using~\eqref{eq:y-mono},
		Lemma~\ref{lem:L-tail} for the $y$-hub process and Karamata's theorem,
	\begin{align}\label{eq:A-split}
		\sum_{u_1\ge0}\pr\big(0\text{ is a }y\text{-hub},L^{<y}\ge u_1,E_x\big)
		&\le(A+1)\,\pr(H_0^{<y})\,\pr^{0,y}(E_x)\notag\\
		&\quad+\pr(H_0^{<y})\sum_{u_1>A}\pr^{0,y}(L^{<y}\ge u_1)\notag\\
		&\le C\big[(A+1)(q(x)+q'(x))+A^{1-\gb}\slv(A^{1-\ga})\big],
	\end{align}
	with $q,q'$ as in Lemma~\ref{lem:no-big-dev}.
	Choosing $A=A(x)\to\infty$ with $A(q(x)+q'(x))\to0$, which is possible by~\eqref{eq:no-big-dev}, the right-hand side tends to $0$.
	Multiplying by $\sum_{d\ge y}p_d\le C\eta^{-\gb}\rhox(x)$ shows that event e) has probability $o(\rhox(x))$.

	Events a), b), c), d), e) cover $\{Z<-x\}$.
	Hence $\limsup_{x\to\infty}\pr^0(Z<-x)/(\E^0 L\rhox(x))\le(1-\eps)^{-\gb}+CK^{-\gb}$,
		and letting $K\to\infty$, $\eps\downarrow0$ completes the proof.
\end{proof}

\begin{proof}[Proof of~\eqref{eq:Z-right}]
	By~\eqref{eq:G-bound}, $Z\le(C_0-\mu)L$, so $Z>x$ forces $L>x/C_0$.
	Fix $K\ge1$, let $\eta_\ast$ be as in~\eqref{eq:eta-star} with $\eps=1$, fix $\eta\in(0,\eta_\ast]$ and $y:=\eta x$.
	The event $\{L>Kx\}$ has probability at most $CK^{-\gb}x^{-\gb}\slv(x^{1-\ga})$.
	On $\{x/C_0<L\le Kx\}$, deleting all edges of length at least $y$ increases passage times,
		so by the pivotal property of the $y$-hubs and Lemma~\ref{lem:basic}.(iii) with~\eqref{eq:karamata},
	\begin{align*}
		Z&\le T_+^y(0,L)-\mu L=S_{+,N}^y+(\mu_+^y-\mu)L\\
		&\le S_{+,N}^y+CKx\,(\eta x)^{1-\gb}\slv(x^{1-\ga})+CKxy^{-6}=S_{+,N}^y+o(x),
	\end{align*}
	with $N\le Kx$.
	Hence $Z>x$ forces $\max_{k\le Kx}\abs{S_{+,k}^y}\ge x/4$ for large $x$.
	If no edge of $\cD_{\ge y}$ lies in the block, then since $L>x/C_0$ and $\eta\le\eta_0/C_0$,
		the event~\eqref{eq:L-tail-nobig} occurs, with probability $O(x^{-\gb-\gd_0})=o(\rhox(x))$.
	Otherwise~\eqref{eq:first-big} applies with $E:=\{\max_{k\le Kx}\abs{S_{+,k}^y}\ge x/4\}$,
		and the splitting~\eqref{eq:A-split} with Lemma~\ref{lem:no-big-dev} (for $\eps=1$) shows that the resulting bound is $o(1)\cdot\sum_{d\ge y}p_d=o(\rhox(x))$.
	Altogether $\limsup\pr^0(Z>x)/\rhox(x)\le CK^{-\gb}\E^0 L$, and $K\to\infty$ proves~\eqref{eq:Z-right}.

	The moment statements follow from~\eqref{eq:Z-left}--\eqref{eq:Z-right} and Karamata's theorem,
		and $\E^0 Z=\E^0(G)-\mu\E^0 L=0$ by the definition of $\mu$.
\end{proof}

\section{Stable fluctuations: proof of Theorem~\ref{thm:stable}}\label{sec:stable}

We continue under the assumptions of Section~\ref{sec:tails}.
Let $(Z_i)_{i\ge1}$ be the i.i.d.~block rewards in the stationary enumeration,
	with common law that of $Z$ in Proposition~\ref{prop:tail-equiv}.

\subsection{The i.i.d.~stable limit theorem}
\begin{lem}\label{lem:iid-stable}
	Let $(b_m)$ satisfy $m\,\pr^0(Z<-b_m)\to1$.
	Then $b_m$ is regularly varying with index $1/\gb$ and
	\begin{align*}
		\frac1{b_m}\sum_{i=1}^mZ_i\tod S_{\gb}(\gs_\gb,-1,0),\quad \gs_\gb^{\gb}=\Gamma(1-\gb)\cos(\pi\gb/2).
	\end{align*}
	Moreover, the sequence $(\sum_{i\le m}Z_i/b_m)_m$ satisfies Anscombe's condition.
	For every $\eps>0$ there is $\gd>0$ with
	\begin{align*}
		\limsup_{m\to\infty}\pr\Big(\max_{\abs{k-m}\le\gd m}\Big|\sum_{i=1}^kZ_i-\sum_{i=1}^mZ_i\Big|>\eps b_m\Big)<\eps.
	\end{align*}
\end{lem}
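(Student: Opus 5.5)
The lemma is the classical i.i.d.\ stable limit theorem for a centered law whose left tail is regularly varying with index $-\gb\in(-2,-1)$ and whose right tail is negligible. Its input is Proposition~\ref{prop:tail-equiv}: $\pr^0(Z<-x)\sim\E^0(L)\rhox(x)$ is regularly varying with index $-\gb$, $\pr^0(Z>x)=o(\pr^0(Z<-x))$, and $\E^0Z=0$.

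\textbf{Step 1: the normalizing sequence.} Take $b_m$ as the generalized inverse of $x\mapsto1/\pr^0(Z<-x)$. Since that function is regularly varying with index $\gb$, \cite[Theorem~1.5.12]{BGT87} shows that $b_m$ is regularly varying with index $1/\gb$. The same argument shows that any two sequences with $m\pr^0(Z<-b_m)\to1$ are asymptotically equivalent.

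\textbf{Step 2: the one-dimensional limit.} I would verify the standard conditions for convergence to an infinitely divisible law without Gaussian part.
\begin{enumeratei}
\item \emph{L\'evy measure.} Regular variation gives $m\pr^0(Z<-yb_m)\to y^{-\gb}$ for $y>0$. Tail negligibility gives $m\pr^0(Z>yb_m)\to0$. Hence the limiting L\'evy measure is $\gb\abs{y}^{-\gb-1}\1_{\{y<0\}}\,dy$.
\item \emph{No Gaussian part.} Karamata's theorem gives $m\,\E^0(Z^2;\abs{Z}\le\gd b_m)/b_m^2\to\frac{\gb}{2-\gb}\gd^{2-\gb}$, which vanishes as $\gd\downarrow0$.
\item \emph{Centering.} Since $\gb>1$ and $\E^0Z=0$, I would use the centering-by-mean convention, so that no truncated-mean correction is needed. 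Concretely, $m\abs{\E^0(Z;\abs{Z}>b_m)}/b_m\to\gb/(\gb-1)$, which is finite, so the compensator is explicit.
\end{enumeratei}
The limit is then a spectrally negative strictly $\gb$-stable law with mean $0$. Its characteristic exponent is obtained from the Lévy--Khintchine integral
\[
\int_0^\infty(e^{-ity}-1+ity)\,\gb y^{-\gb-1}\,dy=-\Gamma(1-\gb)\abs{t}^{\gb}e^{-i\sgn(t)\pi\gb/2}.
\]
Here $\Gamma(1-\gb)>0$ for $\gb\in(1,2)$ and $\cos(\pi\gb/2)<0$. Rewriting $-\Gamma(1-\gb)e^{-i\sgn(t)\pi\gb/2}$ as $-\Gamma(1-\gb)\cos(\pi\gb/2)\bigl(1+i\sgn(t)\tan(\pi\gb/2)\bigr)$ gives the form~\eqref{eq:stable-cf}.

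One could cite \cite[Theorem~XVII.5.3]{Feller} or \cite{ST94} directly. I would still include this computation, because \emph{matching constants and signs to~\eqref{eq:stable-cf} is the main obstacle}: the sign convention for the skewness, the sign of $\sigma_\gb^\gb$ (note $\Gamma(1-\gb)\cos(\pi\gb/2)$ must come out positive, as asserted in~\eqref{eq:stable-main}), and the absence of a shift all have to be checked against the \cite{ST94} parametrization.

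\textbf{Step 3: Anscombe's condition.} Let $S_k:=\sum_{i\le k}Z_i$. For $m(1-\gd)\le k\le m(1+\gd)$, I would bound
\[
\max_{\abs{k-m}\le\gd m}\abs{S_k-S_m}\le2\max_{j\le2\gd m}\abs{S_{k_0+j}-S_{k_0}},\qquad k_0=\floor{(1-\gd)m}.
\]
By the i.i.d.\ structure, the right side has the law of $2\max_{j\le2\gd m}\abs{S_j}$. Ottaviani's (or Etemadi's) maximal inequality reduces this to tail bounds for $\abs{S_j}$ with $j\le2\gd m$. The one-dimensional limit from Step 2, together with $b_{2\gd m}/b_m\to(2\gd)^{1/\gb}$, makes those tails uniformly small. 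A more hands-on alternative is to truncate at $\eps b_m$ and use Kolmogorov's inequality, exactly as in Lemma~\ref{lem:no-big-dev}.

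In either version the deviation probability is at most $\eps$ once $\gd$ is small, because $(2\gd)^{1/\gb}\to0$. A functional version of Step 2 would give the same conclusion as a byproduct.
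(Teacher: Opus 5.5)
Your overall route is the paper's. You obtain $b_m$ by inverting the regularly varying tail via \cite[Theorem~1.5.12]{BGT87}. You apply the classical domain-of-attraction theorem for a centered law with regularly varying left tail of index $-\gb\in(-2,-1)$ and negligible right tail; you check the L\'evy-measure, no-Gaussian-part and centering conditions by hand, where the paper simply cites \cite{F71,BGT87,ST94}. You compute the L\'evy--Khintchine exponent explicitly, and you get Anscombe's condition from a maximal inequality, where the paper cites \cite[Theorem~1.3.2]{Gut09} and Etemadi's inequality. Steps~1, 2(i)--(iii) and~3 are fine as sketched.

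The error is in the step you yourself call the main obstacle. The sign bookkeeping in the characteristic exponent is wrong in three places, and your final form is correct only because two of the errors cancel.
\begin{enumeratei}
\item $\Gamma(1-\gb)<0$ for $\gb\in(1,2)$, since $1-\gb\in(-1,0)$. With your claim $\Gamma(1-\gb)>0$ and $\cos(\pi\gb/2)<0$, the quantity $\gs_\gb^{\gb}=\Gamma(1-\gb)\cos(\pi\gb/2)$ would be negative, contradicting the positivity you say must hold. The real part of your displayed exponent would also be positive, which is impossible because $\int_0^\infty(\cos(ty)-1)\gb y^{-\gb-1}\,dy<0$.
\item The integral equals $\gb\Gamma(-\gb)(it)^{\gb}=-\Gamma(1-\gb)\abs{t}^{\gb}e^{+i\sgn(t)\pi\gb/2}$, not its conjugate. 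For $t>0$ its imaginary part $\int_0^\infty(ty-\sin(ty))\gb y^{-\gb-1}\,dy$ is positive, whereas your formula, with the true sign of $\Gamma(1-\gb)$, makes it negative.
\item $e^{-i\sgn(t)\pi\gb/2}=\cos(\pi\gb/2)\bigl(1-i\sgn(t)\tan(\pi\gb/2)\bigr)$, so your rewriting flips the sign again. Done correctly from your displayed formula, it would give skewness $+1$ in the \cite{ST94} parametrization, i.e.\ a right-skewed limit.
\end{enumeratei}
The fix is the paper's computation. Apply $\int_0^\infty(e^{-sy}-1+sy)y^{-\gb-1}\,dy=\Gamma(-\gb)s^{\gb}$, valid for $\Re s\ge0$, with $s=it$. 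Then use $\gb\Gamma(-\gb)=-\Gamma(1-\gb)>0$ and $(it)^{\gb}=\abs{t}^{\gb}e^{i\sgn(t)\pi\gb/2}$. This yields $-\Gamma(1-\gb)\cos(\pi\gb/2)\abs{t}^{\gb}\bigl(1+i\sgn(t)\tan(\pi\gb/2)\bigr)$ directly, with $\gs_\gb^{\gb}>0$ as the product of two negative factors.
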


\begin{proof}
	By Proposition~\ref{prop:tail-equiv}, $\pr^0(\abs Z>x)$ is regularly varying with index $-\gb$,
		$\pr^0(Z>x)/\pr^0(\abs Z>x)\to0$, and $\E^0(Z)=0$.
	This is the tail-balance condition for the domain of attraction of a totally left-skewed $\gb$-stable law with $\gb\in(1,2)$.
	By~\cite[Theorem~XVII.5.3]{F71} (see also~\cite[Theorem~8.3.1]{BGT87} and~\cite[Theorem~1.8.1]{ST94}),
		$\sum_{i\le m}Z_i/b_m$ converges in law to the infinitely divisible law with no Gaussian part and L\'evy measure $\nu(dy)=\gb\abs y^{-\gb-1}\1_{\{y<0\}}dy$, centered to have mean zero.
	No additional centering is needed because $\E^0(Z)=0$ and $\gb>1$.
	The characteristic exponent of that law is
	\begin{align*}
		\psi(t)&=\int_{-\infty}^0\big(e^{ity}-1-ity\big)\gb\abs y^{-\gb-1}dy=\gb\Gamma(-\gb)(it)^{\gb}\\
		&=-\Gamma(1-\gb)\cos\big({\pi\gb}/2\big)\abs{t}^{\gb}\Big(1+i\,\sgn(t)\tan(\pi\gb/2)\Big),
	\end{align*}
	using $\int_0^\infty(e^{-sy}-1+sy)y^{-\gb-1}dy=\Gamma(-\gb)s^{\gb}$ for $\Re s\ge0$ and $\Gamma(1-\gb)=-\gb\Gamma(-\gb)$.
	Comparing with~\eqref{eq:stable-cf} identifies the limit as $S_\gb(\gs_\gb,-1,0)$ with $\gs_\gb^{\gb}=\Gamma(1-\gb)\cos(\pi\gb/2)$,
		which is positive because both factors are negative for $\gb\in(1,2)$.
	Regular variation of $b_m$ follows from that of $\pr^0(Z<-x)$ by inversion~\cite[Theorem~1.5.12]{BGT87}.
	Anscombe's condition for partial sums of i.i.d.~variables in the domain of attraction of a stable law,
		with regularly varying normalization, is~\cite[Theorem~1.3.2]{Gut09}.
	It follows from Etemadi's maximal inequality (see~\cite{Etemadi85}) applied to the increments.
\end{proof}

\subsection{Proof of Theorem~\ref{thm:stable}}
\begin{proof}
	\smallskip\noindent\textbf{Normalization.} By Proposition~\ref{prop:tail-equiv},
		$m\pr^0(Z<-b)\sim m\E^0L\rhox(b)$, so we may define $b_m$ by $m\E^0L\rhox(b_m)\to1$.
	With $m_n:=\floor{n/\E^0L}$ this gives $n\rhox(b_{m_n})\to1$, \ie\ $b_{m_n}\sim a_n$,
		where $a_n$ is the sequence of Theorem~\ref{thm:stable}.
	Since $\rhox$ is regularly varying with index $-\gb$ (Lemma~\ref{lem:rho}),
		$a_n=n^{1/\gb}\slv^\ast(n)$ with $\slv^\ast$ slowly varying,
		and $a_n\sim(\mu^{\theta\ga-1}\mathrm B(\theta+1,\gb)c_Fn)^{1/\gb}$ when $\slv\to c_F$.

	\smallskip\noindent\textbf{Decomposition.} By~\eqref{eq:Tn-decomp}, on $\{\hub_1\le n\}$,
	\begin{align}\label{eq:stable-decomp}
		T_n-\mu n=\sum_{i=1}^{M_n}Z_i+R_n^-+R_n^+,
	\end{align}
	with $M_n=(\Nn-1)_+$ and $R_n^\pm$ as in~\eqref{eq:residuals}.
	The probability of $\{\hub_1\le n\}$ tends to one.
	The residual $R_n^-$ does not depend on $n$ and is a.s.~finite, hence $o_P(a_n)$.
	For $R_n^+$ we have $\abs{R_n^+}\le(C_0+\mu)(\Ares_n+B)$,
		and the backward recurrence time $\Ares_n$ has a law that does not depend on $n$ and is a.s.~finite, hence tight.
	So $R_n^+=O_P(1)=o_P(a_n)$.

	\smallskip\noindent\textbf{Random index.} $\Nn/n\to1/\E^0L$ a.s.~by the renewal strong law,
		hence $M_n/m_n\to1$ in probability.
	Lemma~\ref{lem:iid-stable} and Anscombe's theorem~\cite[Theorem~1.3.1]{Gut09} give
	\begin{align*}
		\frac1{b_{m_n}}\sum_{i=1}^{M_n}Z_i\tod S_{\gb}(\gs_\gb,-1,0),
	\end{align*}
	and $b_{m_n}/a_n\to1$.
	Together with the residual bounds and Slutsky's theorem this proves~\eqref{eq:stable-main} with centering $\mu n$.

	\smallskip\noindent\textbf{Centering by the mean.} We show that $\E T_n-\mu n=O(n^{2-\gb+(\gb-1)/p+o(1)})$, which is $o(a_n)$.
	First, $\E(\sum_{i\le M_n}Z_i)=\sum_{i\ge1}\E(Z_i;\hub_{i+1}\le n)$.
	Since $Z_i$ is independent of $(\hub_1,L_1,\ldots,L_{i-1})$,
		conditioning on these variables gives $\E(Z_i;\hub_{i+1}\le n)=\E\big(\phi(n-\hub_i)\big)$ with $\phi(s):=\E^0(Z;L\le s)=-\E^0(Z;L>s)$,
		so that $\abs{\phi(s)}\le(C_0+\mu)\E^0(L;L>s)\le Cs^{1-\gb}\slv(s^{1-\ga})$ for $s\ge1$ by~\eqref{eq:L-tail} and Karamata's theorem, and $\phi(s)=0$ for $s<B$.
	By stationarity the hubs in $(0,n]$ have intensity $1/\E^0L$ per unit length, so
	\begin{align*}
		\Big|\E\Big(\sum_{i\le M_n}Z_i\Big)\Big|\le\E\Big(\sum_{i:\,\hub_i\le n}\abs{\phi(n-\hub_i)}\Big)\le\frac C{\E^0L}\sum_{s\le n}s^{1-\gb}\slv(s^{1-\ga})\le Cn^{2-\gb}\slv(n^{1-\ga}).
	\end{align*}

	Second, $\abs{R_n^-}\le(C_0+\mu)(\hub_1+B)$ and $\abs{R_n^+}\le(C_0+\mu)(\Ares_n+B)$, and $\hub_1$,
		$\Ares_n$ have the forward recurrence law~\eqref{eq:palm-stat},
		whose tail is $\pr(\hub_1\ge v)\le Cv^{1-\gb}\slv(v^{1-\ga})$ by~\eqref{eq:L-tail} and Karamata's theorem.
	Hence
	\[
		\E(\hub_1;\hub_1\le n)+\E(\Ares_n;\hub_1\le n)\le2\sum_{v\le n}\pr(\hub_1\ge v)\le Cn^{2-\gb}\slv(n^{1-\ga}).
	\]

	Third, on $\{\hub_1>n\}$ we use $0\le T_n\le W_n:=\sum_{m=-1}^{\ceil{n/B}}Y_m$,
		$\E(W_n^p)\le Cn^p$ and H\"older's inequality,
		which give
	\[
		\E(\abs{T_n-\mu n};\hub_1>n)\le Cn\pr(\hub_1>n)^{1-1/p}\le Cn^{1+(1-\gb)(1-1/p)}\slv(n^{1-\ga})^{1-1/p}.
	\]

	Combining the three bounds,
	\begin{align*}
		\abs{\E T_n-\mu n}\le Cn^{2-\gb}\slv(n^{1-\ga})+Cn^{2-\gb+(\gb-1)/p}\slv(n^{1-\ga})^{1-1/p}.
	\end{align*}
	Since $2-\gb<1/\gb$ for $\gb\in(1,2)$ (equivalently $(\gb-1)^2>0$) and $(\gb-1)/p<(\gb-1)^2/(2\gb)$ by~\eqref{eq:p-stable},
		the right-hand side is $o(n^{1/\gb-\gd'})$ for some $\gd'>0$, hence $o(a_n)$.
	Therefore~\eqref{eq:stable-main} also holds with $\E T_n$ in place of $\mu n$.
\end{proof}

\subsection{The functional limit: proof of Theorem~\ref{thm:stable-functional}}\label{ssec:functional}
We use the following facts about the Skorokhod topologies, see~\cite[Chapter~12]{Whitt02}.
The $M_1$ topology on $D[0,t_0]$ is metrized by
\begin{align*}
	d_{M_1}(x_1,x_2):=\inf\big\{\norm{u_1-u_2}_{\infty}\vee\norm{r_1-r_2}_{\infty}\big\},
\end{align*}
the infimum over all parametric representations $(u_j,r_j)$ of the completed graphs of $x_j$,
	$j=1,2$, that is, continuous maps from $[0,1]$ onto the completed graph (the graph of $x_j$ with the vertical segments $\{t\}\times[x_j(t-)\wedge x_j(t),x_j(t-)\vee x_j(t)]$ added) that are non-decreasing in the natural order of the graph.
For two paths $x_1,x_2$, one has $d_{M_1}(x_1,x_2)\le\norm{x_1-x_2}_\infty$, and $J_1$ convergence implies $M_1$ convergence.
For either topology, convergence in $D[0,\infty)$ is equivalent to convergence of the restrictions to $[0,t_0]$ for all $t_0$ in a dense set of continuity points of the limit~\cite[Section~12.9]{Whitt02};
	since $X_\gb$ has no fixed jump times, each deterministic $t_0>0$ is almost surely a continuity point. A countable dense set suffices.
We write $e$ for the identity map, $\norm{\cdot}_{t_0}$ for the supremum norm on $[0,t_0]$ and present a self-contained proof.

\begin{lem}\label{lem:M1-approx}
	\begin{enumeratei}
		\item\label{m1a:i} Let $x_n\to x$ in $(D[0,\infty),J_1)$,
				and let $\tau_n:[0,\infty)\to[0,\infty)$ be non-decreasing, right-continuous, with $\tau_n\to e$ uniformly on compacts.
			Then $x_n\circ\tau_n\to x$ in $(D[0,\infty),J_1)$.
		\item\label{m1a:ii} ($M_1$ approximation) Let $w\in D[0,t_0]$ be piecewise constant on a partition with possible jump times $0<s_1<\cdots<s_k\le t_0$,
				define $s_0:=0$, $s_{k+1}:=t_0$, and let $y\in D[0,t_0]$ and $\eps>0$.
			Suppose that for each $j\in\{0,\ldots,k\}$ with $s_j<s_{j+1}$ there is a monotone c\`adl\`ag function $g_j$ on $[s_j,s_{j+1})$ with $g_j(s_j)=w(s_j)$,
				$\abs{g_j(s_{j+1}-)-w(s_{j+1})}\le\eps$ and $\sup_{[s_j,s_{j+1})}\abs{y-g_j}\le\eps$,
				and that $\abs{y(t_0)-w(t_0)}\le\eps$.
			Then $d_{M_1}(y,w)\le2\eps+\max_{j\le k}(s_{j+1}-s_j)$.
			If $s_k=t_0$, the final interval is empty and its conditions are omitted; the terminal value condition still applies.
		\item\label{m1a:iii} If $x_n\to x$ in $(D[0,t_0],J_1)$, then
		$
			\sup_{t\le t_0}\abs{x(t)-x(t-)}\le\liminf_{n\to\infty}\,\sup_{t\le t_0}\abs{x_n(t)-x_n(t-)}.
		$
	\end{enumeratei}
\end{lem}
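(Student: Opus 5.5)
The three parts are independent, and I would prove them in the order stated, using only the metric definitions recalled before the lemma.

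For \ref{m1a:i}, I would reduce to a compact interval $[0,t_0]$, with $t_0$ a continuity point of $x$. The $J_1$ convergence $x_n\to x$ supplies increasing homeomorphisms $\lambda_n$ with $\norm{\lambda_n-e}_{t_0'}\to0$ and $\norm{x_n\circ\lambda_n-x}_{t_0'}\to0$ for some $t_0'>t_0$. Since $\tau_n$ need not be a homeomorphism, I would not compose time changes directly. Instead I would use the characterization of $J_1$ convergence at a limit point $x$: for every $t$ and every sequence $t_n\to t$, the values $x_n(t_n)$ accumulate only at $x(t-)$ or $x(t)$, and, within each time window, the jumps of $x_n$ match those of $x$. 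Then I would check that $y_n:=x_n\circ\tau_n$ satisfies the same properties, which holds because $\tau_n(t_n)\to t$ whenever $t_n\to t$. The delicate point is order. A non-decreasing $\tau_n$ may sit on a plateau across a jump of $x_n$, or skip over the jump time entirely. Plateaus only remove values, and skipping produces a single jump of $y_n$ from a value near $x(t-)$ to a value near $x(t)$. Monotonicity of $\tau_n$ preserves the order in which these values appear, so $y_n$ still converges in $J_1$. I would make this precise with Whitt's oscillation modulus criterion. Alternatively, I would build the time change explicitly: $\mu_n:=\tau_n^{-1}\circ\lambda_n$, suitably regularized near the plateaus of $\tau_n$.

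For \ref{m1a:ii}, the plan is to construct parametric representations explicitly. On each interval $[s_j,s_{j+1})$ the completed graph of $w$ is the horizontal segment at height $w(s_j)$, followed by a vertical segment at $s_{j+1}$. I would parametrize the graph of $y$ on that interval and the graph of $w$ over the matching parameter range, so that both spatial components advance over $[s_j,s_{j+1}]$. Their time components then differ by at most the interval length $s_{j+1}-s_j$. For the values, I would use the monotone $g_j$ as an intermediary. Since $y$ stays within $\eps$ of $g_j$, and $g_j$ runs monotonically from $w(s_j)$ to within $\eps$ of $w(s_{j+1})$, I would reparametrize the segment of the completed graph of $w$ (the horizontal piece together with the vertical jump) so that its height tracks $g_j$. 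This is possible exactly because $g_j$ is monotone. The resulting value discrepancy is at most $\eps$ from $y$ versus $g_j$, plus $\eps$ at the endpoint mismatch, giving $2\eps$. Finally I would glue the pieces at the points $s_j$; the matching $g_j(s_j)=w(s_j)$ and the terminal condition $|y(t_0)-w(t_0)|\le\eps$ make the concatenation valid.

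The main obstacle I expect is that the vertical jumps of $y$ inside an interval must be followed by the representation of $w$. Because $g_j$ is only $\eps$-close to $y$, a jump of $y$ may not align with the monotone direction of $g_j$. To handle this I would take $g_j$'s range, clipped to $[w(s_j),w(s_{j+1})]$, as the target height. Along any vertical jump of $y$, I would hold the $w$-parameter at the value of $g_j$ nearest to the current height. Since $y(t-)$ and $y(t)$ both lie within $\eps$ of $g_j$, every intermediate height on the jump also lies within $\eps$ of that held value. I would also make sure that the height of the $w$-representation is non-decreasing in graph order, which again uses the monotonicity of $g_j$.

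For \ref{m1a:iii}, I would fix $t\le t_0$ with a jump $|x(t)-x(t-)|=J$. $J_1$ convergence gives times $t_n\to t$ with $x_n(t_n)\to x(t)$ and $x_n(t_n-)\to x(t-)$: take $t_n=\lambda_n^{-1}(t)$, and use uniform convergence of $x_n\circ\lambda_n$ together with the left limits. The jump of $x_n$ at $t_n$ therefore tends to $J$. Taking the supremum over $t$ and then the liminf gives the inequality, with the convention that $t_n\le t_0$ eventually, or using $t_0$ as a continuity point when $t=t_0$.
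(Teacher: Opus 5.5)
Part (iii) is the paper's argument. With the convention $\norm{x_n\circ\gl_n-x}\to0$ that you fix in (i), the right choice is $t_n=\gl_n(t)$, not $\gl_n^{-1}(t)$. Since $\gl_n$ fixes $t_0$, nothing special happens at $t=t_0$. Parts (i) and (ii) take different routes from the paper, and each needs one repair.

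\textbf{Part (i).} The paper writes $x_n\circ\tau_n=(x_n\circ\gl_n)\circ\rho_n$ with $\rho_n:=\gl_n^{-1}\circ\tau_n$ and reduces to showing $x\circ\rho_n\to x$. It then builds an explicit piecewise linear homeomorphism that sends the hitting times of $\rho_n$ at a partition of small oscillation for $x$ to the partition points. That proof is self-contained.

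Your sequential route can be shorter, but the criterion you describe (accumulation at $\{x(t-),x(t)\}$ plus ``matching jumps'') is too loose to carry the argument. The order information you invoke informally has to be built into the criterion. Also, jumps of $\tau_n$ over stretches where $x_n$ varies continuously create new small jumps of $x_n\circ\tau_n$, which a jump-matching condition would have to account for.

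The precise tool is Ethier--Kurtz (Markov Processes, Prop.~3.6.5): $x_n\to x$ in $J_1$ iff, for all $t_n\to t$,
\begin{enumeratea}
\item $x_n(t_n)$ approaches $\{x(t-),x(t)\}$;
\item if $x_n(t_n)\to x(t)$ and $t_n\le s_n\to t$, then $x_n(s_n)\to x(t)$;
\item if $x_n(t_n)\to x(t-)$ and $t_n\ge s_n\to t$, then $x_n(s_n)\to x(t-)$.
\end{enumeratea}
Each condition transfers to $x_n\circ\tau_n$ in one line: $\tau_n(t_n)\to t$ by uniform convergence, and $s_n\ge t_n$ implies $\tau_n(s_n)\ge\tau_n(t_n)$. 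This is the precise form of your order-preservation remark.

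Your fallback, a ``regularized'' $\tau_n^{-1}\circ\gl_n$, is not yet a plan. On intervals that $\tau_n$ jumps over, no time change can recover $x_n\circ\gl_n$. Controlling that loss is exactly the paper's oscillation-partition step.

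\textbf{Part (ii).} The paper does not pair $y$ with $w$ directly. It sets $\tilde y:=g_j$ on $[s_j,s_{j+1})$ and $\tilde y(t_0):=w(t_0)$, and uses $d_{M_1}(y,\tilde y)\le\norm{y-\tilde y}_{t_0}\le\eps$. It then compares two graphs that are monotone on each piece, in three stages:
\begin{enumeratei}
\item $w$ runs along its horizontal segment while $\tilde y$ rests at $(s_j,w(s_j))$;
\item $w$ traverses its vertical segment at the linearly rescaled height $\psi$ while $\tilde y$ traverses the completed graph of $g_j$;
\item $w$ rests while $\tilde y$ closes its gap of size at most $\eps$ at $s_{j+1}$.
\end{enumeratei}

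Your one-step construction, with clipping in place of rescaling, also works and gives $2\eps\vee\max_j(s_{j+1}-s_j)$. Clipping costs at most $\eps$, because $g_j$ stays between $w(s_j)$ and $g_j(s_{j+1}-)$, and $\abs{g_j(s_{j+1}-)-w(s_{j+1})}\le\eps$. This requires that the $w$-representation finish its horizontal segment while the $y$-representation waits at $(s_j,y(s_j))$. If both time coordinates advanced together, $w$ would sit at height $w(s_j)$ while $y$ follows $g_j$ away from it.

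However, your treatment of jumps of $y$ fails as stated. Suppose $g_j$ jumps at the same instant $t$ as $y$. Then $y(t-)$ is within $\eps$ of $g_j(t-)$ and $y(t)$ is within $\eps$ of $g_j(t)$, so no single held height serves both. Yet the $w$-height must move from the clipped $g_j(t-)$ to the clipped $g_j(t)$ somewhere.

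The fix is to traverse the two vertical segments with a common linear parameter. The height difference before clipping is then a convex combination of $y(t-)-g_j(t-)$ and $y(t)-g_j(t)$, hence at most $\eps$. This is exactly the content of $d_{M_1}\le\norm{\cdot}_\infty$, which the paper's triangle-inequality step imports. So passing through the intermediate $\tilde y$ is the cleaner way to organize your argument.
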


\begin{proof}
	\begin{enumeratei}
	\item 
Fix a continuity point $t_0$ of $x$, a continuity point $T>t_0+1$ of $x$, and $\eps>0$.
	There are increasing homeomorphisms $\gl_n$ of $[0,T]$ with $\norm{\gl_n-e}_T\to0$ and $\norm{x_n\circ\gl_n-x}_T\to0$.
	For all large $n$, $\tau_n([0,t_0])\subset[0,T]$.
	Define $\rho_n:=\gl_n^{-1}\circ\tau_n$ on $[0,t_0]$, which is non-decreasing,
		right-continuous and satisfies $\norm{\rho_n-e}_{t_0}\to0$.
	Then $x_n\circ\tau_n=(x_n\circ\gl_n)\circ\rho_n$ and $\norm{(x_n\circ\gl_n)\circ\rho_n-x\circ\rho_n}_{t_0}\le\norm{x_n\circ\gl_n-x}_T\to0$,
		so it suffices to show $x\circ\rho_n\to x$ in $(D[0,t_0],J_1)$.
	Since $x$ is c\`adl\`ag and continuous at $t_0$,
		there are $0=r_0<r_1<\cdots<r_m=T$ with $t_0\in(r_{j^\ast},r_{j^\ast+1})$ for some $j^\ast$ and $\sup_{r_j\le s,s'<r_{j+1}}\abs{x(s)-x(s')}<\eps$ for every $j$.
	Set $t^n_0:=0$ and, only for $1\le j\le j^\ast$, define
	\[
		t^n_j:=\inf\{t\in[0,t_0]:\rho_n(t)\ge r_j\}.
	\]
	Uniform convergence to $e$ makes these hitting times finite for large $n$ and gives $t^n_j\to r_j$.
	Thus $0=t^n_0<t^n_1<\cdots<t^n_{j^\ast}<t_0$ for large $n$.
	Let $\gl'_n$ be the increasing homeomorphism of $[0,t_0]$ that is linear on each $[t^n_j,t^n_{j+1}]$,
		$j<j^\ast$, and on $[t^n_{j^\ast},t_0]$, with $\gl'_n(t^n_j)=r_j$ and $\gl'_n(t_0)=t_0$.
	Then $\norm{\gl'_n-e}_{t_0}\to0$.
	For $j<j^\ast$ and $t\in[t^n_j,t^n_{j+1})$, both $\rho_n(t)$ and $\gl'_n(t)$ lie in $[r_j,r_{j+1})$ by right-continuity, monotonicity and the construction of $\gl'_n$.
	On the final interval $[t^n_{j^\ast},t_0]$, both lie in $[r_{j^\ast},r_{j^\ast+1})$ for large $n$, since $\rho_n(t_0)\to t_0<r_{j^\ast+1}$.
	Consequently $\norm{x\circ\rho_n-x\circ\gl'_n}_{t_0}<\eps$, and
		$\limsup_nd_{J_1,[0,t_0]}(x\circ\rho_n,x)\le\eps$.
	Letting $\eps\downarrow0$ proves (i).

	\item Let $\tilde y$ be equal to $g_j$ on $[s_j,s_{j+1})$ for each $j\le k$ and to $w(t_0)$ at $t_0$.
	Then $\norm{y-\tilde y}_{t_0}\le\eps$, so $d_{M_1}(y,\tilde y)\le\eps$,
		and it suffices to show $d_{M_1}(\tilde y,w)\le\eps+\max_j(s_{j+1}-s_j)$.
	If $s_k=t_0$, omit the empty final interval from the following concatenation.
	The completed graph of $w$ is the concatenation over $j=0,\ldots,k$ of the horizontal segment $[s_j,s_{j+1}]\times\{w(s_j)\}$ and the vertical segment $\{s_{j+1}\}\times[w(s_j),w(s_{j+1})]$ (a point if $w$ does not jump at $s_{j+1}$,
		and for $j=k$ simply the point $(t_0,w(t_0))$).
	The completed graph of $\tilde y$ is the concatenation over $j$ of the curve $\cC_j$ (the graph of $g_j$ on $[s_j,s_{j+1})$ with its jumps filled in,
		which is a continuous curve from $(s_j,w(s_j))$ to $(s_{j+1},\ga_j')$, $\ga_j':=g_j(s_{j+1}-)$,
		whose second coordinate moves monotonically) and the vertical segment $\{s_{j+1}\}\times[\ga'_j\wedge w(s_{j+1}),\ga'_j\vee w(s_{j+1})]$, of length at most $\eps$.
	Parametrize the $j$-th pieces of both graphs by $z\in[j,j+1]$ (and rescale $[0,k+1]$ to $[0,1]$ at the end) as follows.
	\begin{itemize}
		\item For $z\in[j,j+\frac13]$, the representation of $w$ traverses the horizontal segment linearly and that of $\tilde y$ rests at $(s_j,w(s_j))$.
		\item For $z\in[j+\frac13,j+\frac23]$, let $(\tilde u,\tilde r)$ be any parametric representation of $\cC_j$;
			its second coordinate $\tilde r$ is continuous and monotone from $\ga_j:=w(s_j)$ to $\ga'_j$.
		Let the representation of $w$ traverse the vertical segment with second coordinate $\psi(z):=\ga_j+(\tilde r(z)-\ga_j)(w(s_{j+1})-\ga_j)/(\ga'_j-\ga_j)$ if $\ga'_j\ne\ga_j$,
			and linearly from $\ga_j$ to $w(s_{j+1})$ if $\ga'_j=\ga_j$;
			in both cases $\abs{\tilde r(z)-\psi(z)}\le\abs{\ga'_j-w(s_{j+1})}\le\eps$,
			because $\abs{\tilde r(z)-\ga_j}\le\abs{\ga'_j-\ga_j}$.
		\item For $z\in[j+\frac23,j+1]$, the representation of $\tilde y$ traverses its vertical segment at $s_{j+1}$ linearly while that of $w$ rests at $(s_{j+1},w(s_{j+1}))$;
			the second coordinates differ by at most $\eps$.
	\end{itemize}
	Throughout the $j$-th piece both first coordinates lie in $[s_j,s_{j+1}]$.
	Both parametrizations are continuous and non-decreasing along their graphs,
		so $d_{M_1}(\tilde y,w)\le\eps\vee\max_j(s_{j+1}-s_j)$.

	\item Let $\gl_n$ be increasing homeomorphisms of $[0,t_0]$ with $\gd_n:=\norm{x_n\circ\gl_n-x}_{t_0}\to0$.
	For $t\in(0,t_0]$, $x_n(\gl_n(t)-)=(x_n\circ\gl_n)(t-)$ and $\abs{(x_n\circ\gl_n)(t-)-x(t-)}\le\gd_n$,
		so
	\[
		\abs{x(t)-x(t-)}\le2\gd_n+\abs{x_n(\gl_n(t))-x_n(\gl_n(t)-)}\le2\gd_n+\sup_{s\le t_0}\abs{x_n(s)-x_n(s-)}.
	\]
	\end{enumeratei}
	This completes the proof.
\end{proof}

The next two lemmas describe the passage time inside a long block.
Recall the $y$-hubs and the two comparison rewards $Z_{\diamond,j}^y$, $\diamond\in\{-,+\}$,
	from Section~\ref{sec:tails}, and write $S_{\diamond,k}^y:=\sum_{j<k}Z_{\diamond,j}^y$.
For a hub block $[0,L]$, write $0=\gamma_0<\cdots<\gamma_N=L$ for its $y$-hubs and $L_j^{<y}:=\gamma_{j+1}-\gamma_j$.
Finally, let $\widehat T^y$ be the original metric with the edges of $\cD_{\ge y}$ deleted.
This last metric is not asserted to be $\mathcal F_{<y}$-measurable.
Instead, the definition of a forbidden edge and the corrected lower comparison metric give the pointwise sandwich
\begin{align}\label{eq:deleted-sandwich}
	T_-^y(a,b)\le \widehat T^y(a,b)\le T_+^y(a,b),\quad a,b\in\dZ.
\end{align}

\begin{lem}[Long blocks]\label{lem:good-block}
	Fix $\eps\in(0,1)$, $J\ge1$ and $\eta\in(0,\eta_\ast\wedge\eta_0]$,
		with $\eta_\ast$ as in~\eqref{eq:eta-star} and $\eta_0$ as in Lemma~\ref{lem:L-tail}.
	For $u>B/\eta$, let $y:=\eta u$, and call the hub block $[0,L]$ \emph{$u$-good} if $u<L\le Ju$,
		exactly one edge $e_1=\{u_1,v_1\}$ of $\cD_{\ge y}$ lies in $[0,L]$, and
	\begin{align}\label{eq:good-block}
		\max_{\diamond\in\{-,+\}}\max_{k\le N}\big|S_{\diamond,k}^y\big|<\frac{\eps u}4,
		\quad \max_{j<N}L^{<y}_j<\frac{\eps u}{8C_0}.
	\end{align}
	Then $\pr^0\big(u<L\le Ju,\ [0,L]\text{ is not $u$-good}\big)=o\big(u^{-\gb}\slv(u^{1-\ga})\big)$ as $u\to\infty$.
\end{lem}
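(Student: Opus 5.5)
We split the event $\{u<L\le Ju,\ [0,L]\text{ not $u$-good}\}$ into three pieces, mirroring events c), d), e) in the upper-bound proof of Proposition~\ref{prop:tail-equiv}, and bound each by $o(u^{-\gb}\slv(u^{1-\ga}))$.

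\emph{No edge of $\cD_{\ge y}$ in the block.} Since $y=\eta u$ with $\eta\le\eta_0$ and $L>u$, this is contained in the event of~\eqref{eq:L-tail-nobig}. Its probability is therefore at most $C(\eta)u^{-\gb-\gd_0}$, which is $o(u^{-\gb}\slv(u^{1-\ga}))$ by Potter's bounds.

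\emph{At least two edges of $\cD_{\ge y}$ and $L\le Ju$.} Lemma~\ref{lem:two-long} with $r=Ju$ gives a bound $C(Ju+1)s_y^2$. By~\eqref{eq:karamata}, $s_y\asymp\eta^{-\gb}u^{-\gb}\slv(u^{1-\ga})$. Hence the bound is $C_{J,\eta}\,u\cdot(u^{-\gb}\slv)^2=o(u^{-\gb}\slv(u^{1-\ga}))$, because $u^{1-\gb}\slv(u^{1-\ga})\to0$ for $\gb>1$.

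\emph{Exactly one edge $e_1$, but~\eqref{eq:good-block} fails.} This is the main step. The condition~\eqref{eq:good-block} concerns only the $y$-blocks inside $[0,L]$, and $N\le L\le Ju$. It is therefore implied by the complement of
\[
E_u:=\Big\{\max_\diamond\max_{k\le Ju}|S^y_{\diamond,k}|\ge\eps u/4\Big\}\cup\Big\{\max_{j\le Ju}L^{<y}_j\ge\eps u/(8C_0)\Big\},
\]
where the indices refer to the i.i.d. $y$-block sequence started at $0$. For this to work, $0$ must be a $y$-hub and the $y$-hubs inside the hub block must be an initial segment of that sequence. Both hold because hubs are $y$-hubs, and by Lemma~\ref{lem:cut} together with the first-big-edge discussion, on the existence of $e_1$ the point $0$ is a $y$-hub.

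The event $E_u$ is $\mathcal F_{<y}$-measurable and hence independent of $\cD_{\ge y}$, so~\eqref{eq:first-big} applies with $E=E_u$. This bounds the probability by $\pr(H_0)^{-1}s_y\sum_{u_1\ge0}\pr(0\text{ is a }y\text{-hub},L^{<y}\ge u_1,E_u)$. Split the sum at $A$ exactly as in~\eqref{eq:A-split}. The part with $u_1\le A$ contributes at most $C(A+1)\pr^{0,y}(E_u)$. The part with $u_1>A$ contributes $CA^{1-\gb}\slv(A^{1-\ga})$, using the upper bound of Lemma~\ref{lem:L-tail} for $L^{<y}$, which is uniform in $y$.

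Lemma~\ref{lem:no-big-dev}, applied with $x=u$ and $R=J$, gives $\pr^{0,y}(E_u)\le 2q(u)+q'(u)\to0$; this uses $\eta\le\eta_\ast$. Choose $A=A(u)\to\infty$ with $A(q+q')\to0$. Then the whole sum is $o(1)$, and this piece is $o(s_y)=o(u^{-\gb}\slv(u^{1-\ga}))$.

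The delicate point is the identification used for the third piece. The $y$-block partition of $[0,L]$, including the rewards $Z^y_{\diamond,j}$ computed with $T^y_\pm$, must coincide with the first $N$ terms of the $\pr^{0,y}$ i.i.d. sequence, evaluated on the unconditioned product space before dividing by $\pr(H_0)$, exactly as in the proofs of Lemma~\ref{lem:two-long} and Proposition~\ref{prop:tail-equiv}. Taking the maximum over $k\le Ju$ rather than $k\le N$ removes any dependence on where the hub block ends. With that identification in place, summing the three pieces proves the lemma.
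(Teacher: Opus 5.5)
Your proposal is correct and follows the paper's proof essentially step for step. You use the same three-way split: no edge of $\cD_{\ge y}$ in the block, handled by \eqref{eq:L-tail-nobig}; at least two such edges with $L\le Ju$, handled by the $O(Ju\,s_y^2)$ bound, which you take directly from Lemma~\ref{lem:two-long}; and exactly one such edge with \eqref{eq:good-block} failing, handled by \eqref{eq:first-big}, the split \eqref{eq:A-split} and Lemma~\ref{lem:no-big-dev} with $x=u$, $R=J$. You also state explicitly something the paper leaves implicit: the $y$-hubs in $[0,L]$ form an initial segment of the $y$-block sequence from the $y$-hub $0$, so the failure event is $\mathcal F_{<y}$-measurable.
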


\begin{proof}
	Since $\eta\le\eta_0$, the event that $L>u$ and no edge of $\cD_{\ge y}$ lies in the block has probability $O(u^{-\gb-\gd_0})$ by~\eqref{eq:L-tail-nobig}.
	The event that $L\le Ju$ and at least two edges of $\cD_{\ge y}$ lie in the block has probability $O\big(Ju(\sum_{d\ge y}p_d)^2\big)=o(u^{-\gb}\slv(u^{1-\ga}))$,
		exactly as event (d) in the proof of Proposition~\ref{prop:tail-equiv}.
	If exactly one such edge lies in the block, $L\le Ju$ and~\eqref{eq:good-block} fails, then,
		since $N\le L/B\le Ju$, at least one of the two comparison-reward maximal deviations in Lemma~\ref{lem:no-big-dev},
		or the displayed maximal-gap event there, occurs with $x=u$.
	By~\eqref{eq:first-big}, the estimate~\eqref{eq:A-split} and Lemma~\ref{lem:no-big-dev},
		the union of these events has probability $o(1)\sum_{d\ge y}p_d=o(u^{-\gb}\slv(u^{1-\ga}))$, by~\eqref{eq:karamata}.
\end{proof}

\begin{lem}\label{lem:shape}
	In the setting of Lemma~\ref{lem:good-block} there is $u_0=u_0(\eps,J,\eta)$ such that on every $u$-good block with $u\ge u_0$,
		writing $\varsigma:=\varsigma_{e_1}$ and
	\begin{align}\label{eq:g-shape}
		g(x):=\min\big\{0,\,-\varsigma+2\mu(v_1-x)_+\big\},\quad x\in[0,L],
	\end{align}
	one has $\sup_{x\in[0,L]\cap\dZ}\abs{T(0,x)-\mu x-g(x)}\le5\eps u$.
	The function $g$ is non-increasing and $2\mu$-Lipschitz, with $g(0)=0$ and $g(L)=-\max(\varsigma,0)$.
\end{lem}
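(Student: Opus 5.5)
We prove the two-sided bound by sandwiching $T(0,x)$ between the two comparison metrics plus the single long edge $e_1$, and then reading off the profile from the good-block conditions~\eqref{eq:good-block}.

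Throughout we work on a $u$-good block with $u$ large. Since $e_1$ is the only edge of $\cD_{\ge y}$ in $[0,L]$, and by confinement at the hub $0$ (Lemma~\ref{lem:pivotal}) geodesics from $0$ to $x\le L$ stay in $[0,L]$, we have
\[
T(0,x)=\min\big\{\widehat T^y(0,x),\ \widehat T^y(0,u_1)+t_{e_1}+\widehat T^y(v_1,x)\big\},
\]
the second option being available only when $v_1\le x$. We would also include the reversed traversals; these are dominated in the same way. The three main steps are as follows.

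\emph{Background profile.} We first show that for every $a<b$ in $[0,L]$,
\[
\big|\widehat T^y(a,b)-\mu(b-a)\big|\le 2\eps u .
\]
We use the sandwich~\eqref{eq:deleted-sandwich} and pivotality of $y$-hubs for $T^y_\pm$. Let $\gamma_k$ be the first $y$-hub $\ge a$ and $\gamma_{k'}$ the last $y$-hub $\le b$. Then
\[
T^y_\diamond(\gamma_k,\gamma_{k'})=S^y_{\diamond,k'}-S^y_{\diamond,k}+\mu^y_\diamond(\gamma_{k'}-\gamma_k).
\]
By~\eqref{eq:good-block} the difference of partial sums is at most $\eps u/2$. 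The end pieces cost at most $C_0(L^{<y}_j+B)\le\eps u/8+O(1)$ by linear control at $y$-hubs, and the drift error $\mu\cdot$gap is bounded likewise. Finally $|\mu^y_\diamond-\mu|L\le Ju\cdot o(1)=o(u)$ by Lemma~\ref{lem:basic}. Here the comparison metrics need $y$-hub linear control, which holds because $y$-hubs are locally good.

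\emph{Assembly.} Put $\varsigma=\mu d_{e_1}-t_{e_1}$. For $x<v_1$ the first option gives $T(0,x)-\mu x\in[-2\eps u,2\eps u]$; here $g(x)$ should be near $0$ unless the second option also matters. For $x\ge v_1$, the second option equals
\[
\mu x-\varsigma+O(\eps u),
\]
so $T(0,x)-\mu x=\min\{0,-\varsigma\}+O(4\eps u)$. This matches $g(x)=\min\{0,-\varsigma\}$ for $x\ge v_1$.

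\emph{The window before $v_1$ (main obstacle).} The difficulty is for $x<v_1$: there a path may use $e_1$ and then backtrack left from $v_1$ to $x$. That costs
\[
\mu u_1+t_{e_1}+\mu(v_1-x)+O(\eps u)=\mu x-\varsigma+2\mu(v_1-x)+O(\eps u),
\]
which is exactly the term $-\varsigma+2\mu(v_1-x)_+$ in $g$. So we take $T(0,x)-\mu x$ to be the minimum of $0$ and this quantity, up to $4\eps u$. Bounding this backward segment needs the background estimate in reverse orientation, which holds by symmetry, and we must check that no other route (for example traversing $e_1$ in reverse) does better. Such a route ending left of $u_1$ would cost at least $\mu x+O(\eps u)$, since it pays at least $2\mu(v_1-u_1)\ge\mu d_{e_1}-\varsigma$ extra beyond the direct route.

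The properties of $g$ are then immediate. It is a minimum of $0$ and a non-increasing $2\mu$-Lipschitz function, and $g(0)=0$ provided $2\mu v_1\ge\varsigma$, which holds since $\varsigma\le\mu d_{e_1}\le\mu v_1$. Moreover $g(L)=\min(0,-\varsigma)$ because $v_1\le L$. Collecting the errors gives at most $5\eps u$ once $u\ge u_0$.
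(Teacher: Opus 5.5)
Your proposal is correct and follows essentially the same route as the paper: a uniform linear bound for $\widehat T^y$ on the block from the sandwich~\eqref{eq:deleted-sandwich} and the $y$-hub decomposition under~\eqref{eq:good-block}, followed by the three-term minimum (avoid $e_1$, or use it in either orientation), whose forward term produces exactly $-\varsigma+2\mu(v_1-x)_+$. Only tidy up a few loose ends.

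\begin{itemize}
\item The forward option through $e_1$ is available for every $x$, not only for $x\ge v_1$ as your first display says.
\item In the background estimate, the case where $a,b$ lie between the same two consecutive $y$-hubs needs separate treatment: use non-negativity for the lower bound and linear control at the next $y$-hub for the upper bound.
\item The reversed traversal is dominated for all $x$, not just $x<u_1$. This follows from the one-line check $\mu(v_1-u_1)+\mu\big(\abs{x-u_1}-\abs{x-v_1}\big)\ge0$, which should replace your comparison $2\mu(v_1-u_1)\ge\mu d_{e_1}-\varsigma$; that comparison is not the inequality actually needed.
\end{itemize}
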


\begin{proof}
	Use the convention $L^{<y}_N:=0$, and let
	\begin{align*}
		\Delta:=2\max_{\diamond\in\{-,+\}}\max_{k\le N}\big|S_{\diamond,k}^y\big|
		+2C_0\Big(\max_{j<N}L^{<y}_j+B\Big)
		+L\max_{\diamond\in\{-,+\}}\big|\mu_\diamond^y-\mu\big| .
	\end{align*}
	We first show
	\begin{align}\label{eq:Tless-linear}
		\abs{\widehat T^y(a,b)-\mu(b-a)}\le\Delta\text{ for all }0\le a<b\le L.
	\end{align}
	For each $\diamond\in\{-,+\}$, the usual decomposition at the $y$-hubs and the linear-control bounds give
	\begin{align*}
		\abs{T_\diamond^y(a,b)-\mu(b-a)}\le\Delta .
	\end{align*}
	Indeed, if $j:=\max\{i:\gamma_i\le a\}$ and $k:=\max\{i:\gamma_i\le b\}$ are equal,
		use non-negativity for the lower bound and the cell route for the upper bound.
	If $j<k$, then
	\begin{align*}
		T_\diamond^y(\gamma_{j+1},\gamma_k)
		=S_{\diamond,k}^y-S_{\diamond,j+1}^y
		+\mu_\diamond^y(\gamma_k-\gamma_{j+1}),
	\end{align*}
	and the two end pieces cost at most $C_0(L_j^{<y}+B)$ and $C_0(L_k^{<y}+B)$.
	The definition of $\Delta$ then gives the displayed estimate.
	The sandwich~\eqref{eq:deleted-sandwich} proves~\eqref{eq:Tless-linear};
		symmetry gives the same bound with the arguments reversed.

	Now fix $x\in[0,L]\cap\dZ$.
	By Lemma~\ref{lem:pivotal}.\ref{pivot:ii}, a geodesic from the hub $0$ to $x$ stays in $[0,L]$. It is a simple path, so it uses $e_1$ at most once, and its remaining edges are not deleted.
	Conversely, every path in the deleted environment,
		and every concatenation of two such paths with $e_1$, is an admissible path.
	Therefore
	\begin{align*}
		T(0,x)=\min\Big\{\widehat T^y(0,x),\ \widehat T^y(0,u_1)+t_{e_1}+\widehat T^y(v_1,x),\ \widehat T^y(0,v_1)+t_{e_1}+\widehat T^y(u_1,x)\Big\}.
	\end{align*}
	Since $t_{e_1}=\mu(v_1-u_1)-\varsigma$,
		by~\eqref{eq:Tless-linear} the three terms are within $\Delta$, $2\Delta$ and $2\Delta$, respectively, of
	\begin{align*}
		\mu x,\quad \mu u_1+\mu(v_1-u_1)-\varsigma+\mu\abs{x-v_1}=\mu x-\varsigma+2\mu(v_1-x)_+,\\
		\text{and}\quad \mu v_1+\mu(v_1-u_1)-\varsigma+\mu\abs{x-u_1},
	\end{align*}
	and the third of these values is at least the second, because $\abs{x-u_1}-\abs{x-v_1}\ge-(v_1-u_1)$.
	Hence $\abs{T(0,x)-\mu x-g(x)}\le2\Delta$.
	On a $u$-good block, $\Delta\le\eps u/2+\eps u/4+2C_0B+Ju\max_\diamond|\mu_\diamond^y-\mu|$,
		and Lemma~\ref{lem:basic}.(ii)--(iii) and~\eqref{eq:karamata} give $Ju\max_\diamond|\mu_\diamond^y-\mu|=o(u)$.
	Thus $2\Delta\le5\eps u$ once $u\ge u_0(\eps,J,\eta)$.
	The properties of $g$ are immediate from~\eqref{eq:g-shape},
		since $x\mapsto2\mu(v_1-x)_+$ is non-increasing and $2\mu$-Lipschitz,
		and $\varsigma\le\mu(v_1-u_1)<2\mu v_1$ gives $g(0)=0$.
\end{proof}

\begin{proof}[Proof of Theorem~\ref{thm:stable-functional}]
	Let $Y_n(t):=(T(0,\floor{nt})-\mu nt)/a_n$, and with $M_n=(\Nn-1)_+$,
		$U_k:=\sum_{i\le k}Z_i$ for $k\ge1$ and $U_k:=0$ for $k\le0$, let $V_n(t):=U_{M_{\floor{nt}}}/a_n$.
	Fix $t_0>0$.
	By the remarks preceding Lemma~\ref{lem:M1-approx}, it suffices to prove $$Y_n\tod X_\gb\text{ in }(D[0,t_0],M_1).$$

	\smallskip\noindent\textbf{Step 1: $V_n\tod X_\gb$ in $J_1$.} Let $b_m$ and $m_n$ be as in the proof of Theorem~\ref{thm:stable}.
	By Proposition~\ref{prop:tail-equiv} the i.i.d.~variables $Z_i$ are centered and lie in the domain of attraction of a totally left-skewed $\gb$-stable law,
		so the functional limit theorem for i.i.d.~sums~\cite[Chapter~4]{Whitt02} gives $x_n:=(U_{\floor{m_nt}}/b_{m_n})_{t\ge0}\tod X_\gb$ in $(D[0,\infty),J_1)$,
		where $X_\gb$ is a L\'evy process whose marginal at time $1$ is identified by Lemma~\ref{lem:iid-stable};
		this is the process described in the theorem.
	Let $\tau_n(t):=\max(M_{\floor{nt}},0)/m_n$,
		a non-decreasing right-continuous step function with $\floor{m_n\tau_n(t)}=\max(M_{\floor{nt}},0)$,
		so that $V_n=(b_{m_n}/a_n)\,x_n\circ\tau_n$.
	By the renewal strong law $\tau_n(t)\to t$ a.s.\ for every $t$,
		hence uniformly on compacts because $\tau_n$ is monotone and the limit is continuous.
	Thus $(x_n,\tau_n)\tod(X_\gb,e)$ jointly~\cite[Section~11.4]{Whitt02},
		and since the composition map is measurable~\cite[Section~13.2]{Whitt02} and continuous at every point of the form $(x,e)$ by Lemma~\ref{lem:M1-approx}.\ref{m1a:i},
		the continuous-mapping theorem and $b_{m_n}/a_n\to1$ give $V_n\tod X_\gb$ in $(D[0,\infty),J_1)$, hence in $(D[0,t_0],M_1)$.

	\smallskip\noindent\textbf{Step 2: reduction.} Let $W_n(t):=Y_n(\hub_{N_{\floor{nt}}}/n)$ if $nt\ge\hub_1$ and $W_n(t):=0$ otherwise.
	By~\eqref{eq:additivity-hubs}, $Y_n(\hub_i/n)=(R_n^-+U_{i-1})/a_n$ for $i\ge1$,
		so $\norm{W_n-V_n}_{t_0}\le\abs{R_n^-}/a_n\to0$ a.s.,
		and $W_n$ is piecewise constant on $[0,t_0]$ with possible jumps at the partition points $s_i:=\hub_i/n\in(0,t_0]$.
	By Step 1 and the convergence-together theorem~\cite[Section~11.4]{Whitt02},
		it suffices to show that $d_{M_1}(Y_n,W_n)\to0$ in probability on $[0,t_0]$.
	For a block $i\ge1$ and $x\in[\hub_i,\hub_{i+1}]$ define $r_i(x):=T(\hub_i,x)-\mu(x-\hub_i)$,
		so that $r_i(\hub_{i+1})=Z_i$ and, again by~\eqref{eq:additivity-hubs},
	\begin{align}\label{eq:Y-W}
		Y_n(t)-W_n(t)=\frac{r_i(\floor{nt})-\mu(nt-\floor{nt})}{a_n},\quad t\in[s_i,s_{i+1}).
	\end{align}
	Recall also that $\abs{r_i(x)}\le(C_0+\mu)(L_i+B)$ by Proposition~\ref{prop:regeneration}.(ii).

	\smallskip\noindent\textbf{Step 3: the good event.} Fix $\eps\in(0,1)$ and $J\ge1$,
		let $\eta:=\eta_\ast\wedge\eta_0$ with $\eta_\ast$ given by~\eqref{eq:eta-star},
		and define $u:=\eps a_n$ and $K_n:=\ceil{2nt_0/\E^0L}$.
	Let $A_n$ be the event that
	\begin{enumeratei}
		\item $\hub_1\le\eps a_n$ and $N_{\floor{nt_0}}\le K_n$;
		\item $L_i\le Ju$ for all $1\le i\le K_n$;
		\item every block $i\in\{1,\ldots,K_n\}$ with $L_i>u$ is $u$-good in the sense of Lemma~\ref{lem:good-block};
		\item the block containing $\floor{nt_0}$ has length $L_{N_{\floor{nt_0}}}\le u$;
		\item $\max_{0\le i\le K_n}L_i\le\gd_nn$, where $\gd_n\to0$ is chosen below;
		\item $u\ge u_0(\eps,J,\eta)$ and $B\le u$.
	\end{enumeratei}
	We claim $\limsup_n\pr(A_n^c)\le Ct_0(J\eps)^{-\gb}$.
	Indeed, $\hub_1$ is a.s.\ finite and $N_{\floor{nt_0}}/n\to t_0/\E^0L$ a.s., so (i) fails with vanishing probability.
	The blocks $i\ge1$ are i.i.d.~with the $\pr^0$-law of $L$, so by~\eqref{eq:L-tail},
	\[
		\pr(\text{(ii) fails})\le K_n\pr^0(L>J\eps a_n)\le CK_n(J\eps a_n)^{-\gb}\slv(a_n^{1-\ga})\le Ct_0(J\eps)^{-\gb}
	\]
	for large $n$,
		because $na_n^{-\gb}\slv(a_n^{1-\ga})$ converges by Lemma~\ref{lem:rho} and $n\rhox(a_n)\to1$.
	By Lemma~\ref{lem:good-block}, $\pr(\text{(iii) fails, (ii) holds})\le K_n\,o(u^{-\gb}\slv(u^{1-\ga}))\to0$.
	The block containing $\floor{nt_0}$ has the size-biased law,
		so $\pr(\text{(iv) fails})\le C\E^0(L;L>u)\le Cu^{1-\gb}\slv(u^{1-\ga})\to0$ by~\eqref{eq:L-tail} and Karamata's theorem.
	For (v),
	\[
		\pr(\max_{1\le i\le K_n}L_i>\gd n)\le K_n\pr^0(L>\gd n)\le C\gd^{-\gb}n^{1-\gb}\slv(n^{1-\ga})\to0
	\]
	for every fixed $\gd>0$,
		and $L_0$ is a.s.\ finite, so a sequence $\gd_n\to0$ with $\pr(\text{(v) fails})\to0$ exists.
	Finally (vi) holds for all large $n$.

	\smallskip\noindent\textbf{Step 4: $M_1$ approximation on $A_n$.} On $A_n$ we apply Lemma~\ref{lem:M1-approx}.\ref{m1a:ii} with $w=W_n$,
		$y=Y_n$, the hub partition points $s_1<s_{2}<\cdots<s_k$ of $W_n$ in $(0,t_0]$,$k=N_{\floor{nt_0}}$,
		and the tolerance $\eps':=6(1+C_\ast)\eps$, where $C_\ast:=C_0+\mu$, as follows.
	\begin{itemize}
		\item On $[0,s_1)$: for $x<\hub_1$, the linear control to the left of the hub $\hub_1$ gives $T(0,x)\le T(0,\hub_1)+T(x,\hub_1)\le2C_0(\hub_1+B)$,
			so $\abs{Y_n}\le(2C_0+\mu)(\hub_1+B)/a_n\le3C_\ast\eps$ for large $n$,
			and $\abs{W_n(s_1)}=\abs{R_n^-}/a_n\le C_\ast(\hub_1+B)/a_n\le2C_\ast\eps$; take $g_0$ linear from $0$ to $W_n(s_1)$.
		\item On $[s_i,s_{i+1})$ with $1\le i<k$ and $L_i\le u$ (a short block): by~\eqref{eq:Y-W},
			$\abs{Y_n-W_n(s_i)}\le(C_\ast(u+B)+\mu)/a_n\le3C_\ast\eps$ and $\abs{W_n(s_{i+1})-W_n(s_i)}=\abs{Z_i}/a_n\le2C_\ast\eps$;
			take $g_i$ linear from $W_n(s_i)$ to $W_n(s_{i+1})$.
		\item On $[s_i,s_{i+1})$ with $1\le i<k$ and $L_i>u$ (a long block, hence $u$-good by (ii)--(iii)):
			let $g$ be the function~\eqref{eq:g-shape} of the block $[\hub_i,\hub_{i+1}]$ translated to $[0,L_i]$,
			and define $g_i(t):=W_n(s_i)+g(nt-\hub_i)/a_n$.
		Then $g_i$ is monotone, $g_i(s_i)=W_n(s_i)$, and by Lemma~\ref{lem:shape},
			the Lipschitz property of $g$ and~\eqref{eq:Y-W},
			$\sup_{[s_i,s_{i+1})}\abs{Y_n-g_i}\le(5\eps u+3\mu)/a_n\le6\eps$ and $\abs{g_i(s_{i+1}-)-W_n(s_{i+1})}=\abs{g(L_i)-Z_i}/a_n\le5\eps u/a_n=5\eps^2$.
		\item On $[s_k,t_0]$: the block $k=N_{\floor{nt_0}}$ is short by (iv),
			so $\abs{Y_n-W_n(s_k)}\le3C_\ast\eps$ there; take $g_k\equiv W_n(s_k)=W_n(t_0)$.
	\end{itemize}
	All hypotheses of Lemma~\ref{lem:M1-approx}.\ref{m1a:ii} hold with tolerance $\eps'$,
		and the spacings satisfy $s_1\le\eps a_n/n$ and $s_{j+1}-s_j\le L_j/n\le\gd_n$ by (v).
	Hence, on $A_n$ and for large $n$,
	\begin{align*}
		d_{M_1}(Y_n,W_n)\le2\eps'+\gd_n+\eps a_n/n .
	\end{align*}

	For fixed $\eps$ and $J$,
		Steps 3 and 4 give $\limsup_n\pr\big(d_{M_1}(Y_n,W_n)>13(1+C_\ast)\eps\big)\le Ct_0(J\eps)^{-\gb}$.
	Letting $J\to\infty$ and then $\eps\downarrow0$ shows $d_{M_1}(Y_n,W_n)\to0$ in probability on $[0,t_0]$, which by Step 2 completes the proof.
\end{proof}

\section{Numerical experiments}\label{sec:sim}
We illustrate both regimes numerically.
The passage time $T(0,n)$ was computed by Dijkstra's algorithm on the complete graph with vertex set $[-m,n+m]\cap\dZ$,
	$m=100$, with $1000$ independent replicates for each $n\in\{250,500,1000,2000\}$ (and $n=4000$ for configuration (2)).
Two edge laws with $\theta=1$ were used: law~A, $\go\sim\mathrm{Uniform}(0,1)$,
	for which $F(t)=t$ on $[0,1]$ and Assumption~\ref{ass:rv} holds with $\slv\equiv1$; and law~B,
	$\go=U(1-U)^{-2}$ with $U$ uniform,
	for which $F(t)\sim t$ at $0$ and $\pr(\go>t)\sim t^{-1/2}$ at infinity, so that $\E\go=\infty$.
For $\theta=1$ the thresholds are $\ga=3$ and $\ga=4$, and $\gb=\ga-2$ in the stable window.
The four configurations and the summary statistics are listed in Table~\ref{tab:sim}.

\begin{table}[htbp]
	\centering\footnotesize
	\begin{tabular}{llcccccc}
		\hline
		config. & law, $\ga$ & largest $n$ & $\hat\mu=\bar T_n/n$ & slope of $\widehat{\var}(T_n)$ vs $n$ & predicted   & skewness & ex.\ kurt. \\
		\hline
		(1)     & A, $5$     & 2000        & $0.4726\pm0.0002$    & $0.98$                                & $1$         & $-0.12$  & $-0.01$    \\
		(2)     & A, $3.5$   & 4000        & $0.3530\pm0.0005$    & $1.35$                                & $2/\gb=4/3$ & $-1.56$  & $4.9$      \\
		(3)     & A, $3.25$  & 2000        & $0.2884\pm0.0010$    & $1.55$                                & $2/\gb=8/5$ & $-1.72$  & $5.6$      \\
		(4)     & B, $5$     & 2000        & $4.810\pm0.010$      & $1.06$                                & $1$         & $+0.26$  & $+0.27$    \\
		\hline
	\end{tabular}
	\smallskip
	\caption{Summary of the simulations.
		In the stable regime, $2/\gb$ denotes twice the fluctuation-scale exponent.}\label{tab:sim}
\end{table}

\begin{figure}[htbp]
	\centering
	\includegraphics[width=.7\textwidth]{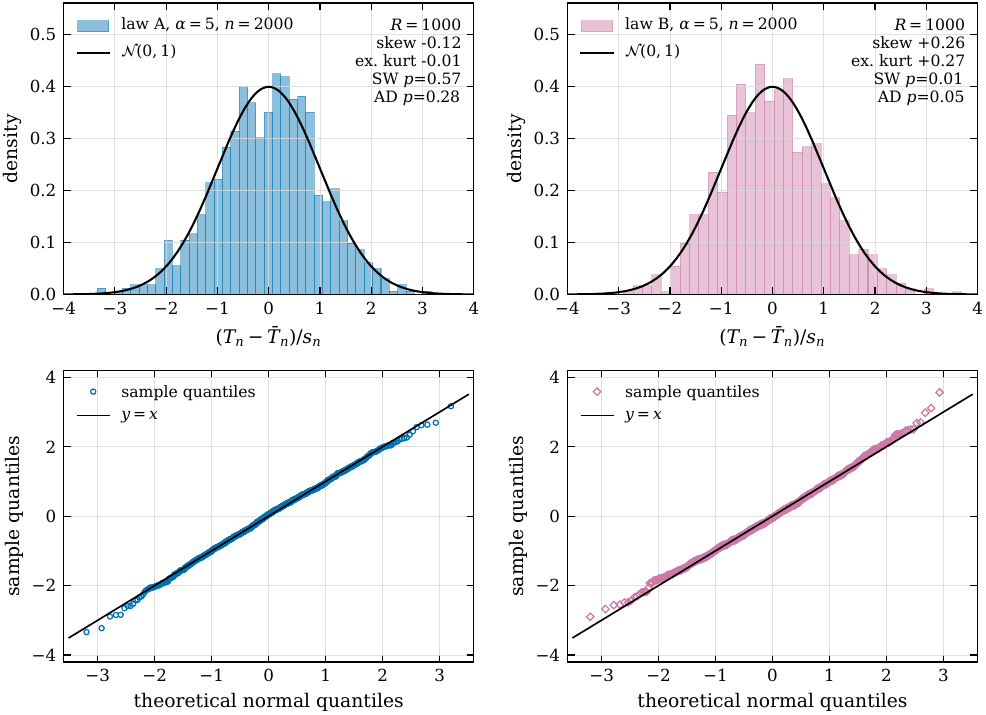}
	\caption{Gaussian regime, $\ga=5$, $n=2000$.
		Left: law~A. Right: law~B ($\E\go=\infty$).
		Top: histograms of the standardized passage time.
		Bottom: normal Q-Q plots.
        }\label{fig:gauss}
\end{figure}

\begin{figure}[htbp]
	\centering
	\includegraphics[width=.7\textwidth]{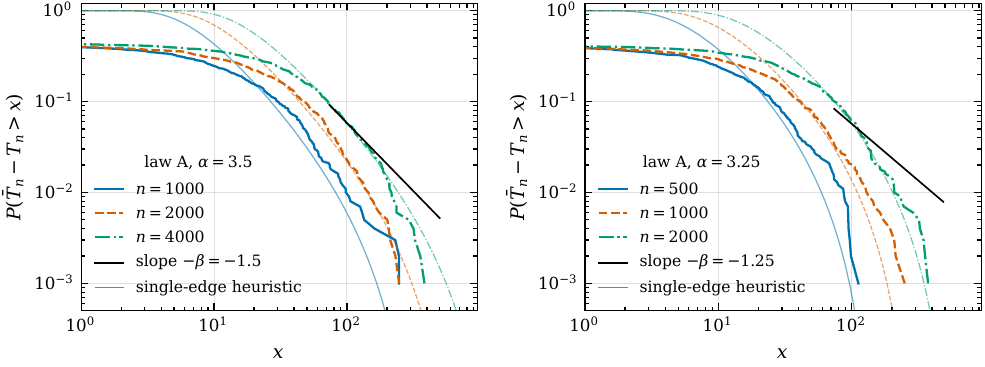}
	\caption{Left tail $\pr(\bar T_n-T_n>x)$ on logarithmic axes for $\ga=3.5$ (left) and $\ga=3.25$ (right) and several $n$,
			with the reference slope $-\gb$ and the single-edge prediction $1-\exp\{-\sum_{d\le n}(n-d+1)F((\hat\mu d-x)_+d^{-\ga})\}$ (thin lines),
			the finite-$n$ version of $n\rhox(x)$.
            }\label{fig:stable-tail}
\end{figure}


\subsection*{Gaussian regime}
Configurations (1) and (4) are consistent with Theorem~\ref{thm:main}:
	the mean and the variance grow linearly (fitted log--log variance slopes $0.98$ and $1.06$),
	and the standardized passage times are Gaussian to statistical accuracy for law~A at every $n$ (Figure~\ref{fig:gauss}).
For law~B, whose edge weights have infinite mean,
	the simulations show a finite empirical time constant and fluctuations on the order of $\sqrt n$;
	the residual right skew ($0.65$ at $n=250$, $0.26$ at $n=2000$) decreases with $n$.
In both cases the sampled geodesics use edges of length at most about $20$,
	with roughly $0.94$ (law~A) or $0.67$ (law~B) edges per unit length.

\subsection*{Stable regime}
Configurations (2) and (3) illustrate the qualitative predictions of Theorems~\ref{thm:stable} and~\ref{thm:stable-functional}.
The finite-sample fluctuation diagnostic grows near the predicted order $n^{1/\gb}$ (fitted sample-variance slopes $1.35$ and $1.55$ against the squared fluctuation-scale exponents $2/\gb=4/3$ and $8/5$).
The fitted laws are skewed to the left:
	every reported maximum-likelihood stable fit returns the skewness parameter $-1$,
	and the right tail of $T_n-\bar T_n$ has an estimated index of about $6$--$7$.
The finite-window single-edge prediction matches the displayed left-tail data for $x\gtrsim n^{1/\gb}$ (Figure~\ref{fig:stable-tail}),
	and the sampled geodesics contain long edges (mean longest edge $113$ and maximum $1286$ at $n=2000$ for $\ga=3.5$).
The convergence of the bulk to the proposed stable law appears slow:
	the freely fitted stable index is about $1.6$--$1.7$ for both $\gb=1.5$ and $\gb=1.25$ up to $n=4000$.
A possible explanation is finite-window truncation.
An edge of length $d$ saves at most $\hat\mu d$ and the simulated window contains no edge longer than $n+2m$,
	so the tail of $\bar T_n-T_n$ is cut off at $x\approx\hat\mu n$.

\appendix
\section{Nearest-neighbor hubs and sharp moment thresholds}\label{app:nn}

In this appendix we retain Assumption~\ref{ass:law} and $\ga>1+2/\theta$, and additionally assume $\E\go<\infty$ and describe the hub construction based on the nearest-neighbor random walk.
It is the special case $\ell=1$, $B=1$, $Y_m=\go_{m,m+1}$ of Section~\ref{sec:spread},
	without the portal, and it has the advantage that the moment thresholds for the hub gaps can be characterized exactly.
Write $\mu_1:=\E\go$, choose $\mu_1<c<2^{\ga-1}\operatorname{ess\,sup}\go$ (with no finite upper restriction for unbounded support),
	and define $S[x,y]:=\sum_{i=x}^{y-1}\go_{i,i+1}$ for $x<y$, $S[x,x]=0$.
This choice exists since $\ga>1$ and ensures $F(cd^{1-\ga})<1$ for every $d\ge2$,
	so that no finite-length factor of the crossing product vanishes.
For $x\in\dZ$ define
\begin{align*}
	A_x^{++}:=\bigcap_{k\ge1}\{S[x,x+k]\le ck\},\ 
    A_x^{--}:=\bigcap_{k\ge1}\{S[x-k,x]\le ck\},\ 
	A_x^{-+}:=\bigcap_{j,k\ge1}\{t_{x-j,x+k}>c(j+k)\}.
\end{align*}
The first two events say that the nearest-neighbor walk started at $x$ stays below the line of slope $c$ in both directions.
The third says that every edge crossing $x$ costs more than $c$ times its length.
The three events are independent, and $x$ is a \emph{nearest-neighbor hub} if $A_x:=A_x^{++}\cap A_x^{--}\cap A_x^{-+}$ occurs.

Every geodesic from the left of a nearest-neighbor hub to its right passes through it,
	and geodesics between consecutive hubs stay in the corresponding interval.
We use the notation of the main text:
	$\hub_0\le0<\hub_1<\hub_2<\cdots$ are the hubs in the stationary enumeration,
	$\pr^0$ is the Palm law, and $L$ is the Palm gap.
Let $\mvgs:=\{x:A^{-+}_x\}$ be the set of points crossed by no cheap edge,
	with Palm law $\pr^{0,\gs}$ and first point $\gs_1>0$ after $0$,
	and let $\gL_{\mathrm{nn}}:=\{x:A_x^{++}\cap A_x^{--}\}$ be the set of two-sided ladder points of the walk $S[0,\cdot]-c\,\cdot$.

\begin{lem}[Existence]\label{lem:nn-hub}
	If $\ga>1+2/\theta$ and $\E\go<\infty$, then $\pr(A_x)>0$.
	Consequently the nearest-neighbor hubs form a stationary ergodic point process of positive density,
		with the regeneration structure of Proposition~\ref{prop:regeneration}.
\end{lem}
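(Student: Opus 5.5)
The plan is to use the independence of the three events $A_x^{++}$, $A_x^{--}$, $A_x^{-+}$, which holds because they involve disjoint edge sets: nearest-neighbour edges to the right of $x$, nearest-neighbour edges to the left of $x$, and edges strictly crossing $x$. With that, $\pr(A_x)=\pr(A_x^{++})\pr(A_x^{--})\pr(A_x^{-+})$, and it suffices to show that each factor is positive.

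\emph{Walk factors.} The walk $k\mapsto S[x,x+k]-ck$ has i.i.d. increments $\go-c$ with mean $\mu_1-c<0$, because $\E\go=\mu_1<c$. A random walk with negative drift satisfies $\sup_{k\ge1}(S[x,x+k]-ck)\le0$ with positive probability. This holds because $\pr(\text{never exceeds }0)>0$ is equivalent to finiteness of the expected number of strict ascending ladder epochs, and that follows from the SLLN. Moreover $\pr(\go<c)>0$ since $c>\mu_1$, so the first step can be made strictly negative. Hence $\pr(A_x^{++})=\pr(A_x^{--})>0$.

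\emph{Crossing factor.} This is the main step. The edges $\{x-j,x+k\}$ with $j,k\ge1$ are independent, so
\[
\pr(A_x^{-+})=\prod_{d\ge2}\big(1-F(c\,d\,d^{-\ga})\big)^{d-1}=\prod_{d\ge2}\big(1-F(cd^{1-\ga})\big)^{d-1}.
\]
Each factor is positive by the choice $c<2^{\ga-1}\operatorname{ess\,sup}\go$. Indeed $cd^{1-\ga}\le c2^{1-\ga}<\operatorname{ess\,sup}\go$ for $d\ge2$, so $F(cd^{1-\ga})<1$. The infinite product is positive iff $\sum_d (d-1)F(cd^{1-\ga})<\infty$. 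To check this, fix $\eps$ with $(\theta-\eps)(\ga-1)>2$, which is possible since $\ga>1+2/\theta$. The upper bound in \eqref{eq:two-sided-F} then gives $F(cd^{1-\ga})\le C_\eps d^{-(\theta-\eps)(\ga-1)}$ for large $d$, so the series converges, exactly as in Lemma~\ref{lem:summability}.(i).

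For the consequences, stationarity and ergodicity follow because $\{x:A_x\}$ is a translation-covariant factor of the i.i.d. edge field, and its intensity is $\pr(A_0)>0$. For the regeneration structure, I would rerun Lemma~\ref{lem:cut} and Proposition~\ref{prop:regeneration} with $B=\ell=1$, $Y_m=\go_{m,m+1}$, $a$ replaced by $c$, no portal, and forbidden edges meaning $t_e\le c\,d_e$. The only new check is the pivotal property. If an edge $\{y,z\}$ with $y<x<z$ crosses a hub, it costs more than $c(z-y)$, while the nearest-neighbour route from $y$ through $x$ to $z$ costs at most $c(x-y)+c(z-x)$ on $A_x^{--}\cap A_x^{++}$. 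So rerouting is strictly cheaper, including for short edges, which is why no portal is needed. Localization and the cut factorization then go through verbatim, since $A_x^{\pm\pm}$ again have finite parts determined on one side.
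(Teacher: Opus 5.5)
Your proof is correct and follows the same route as the paper. You factor $\pr(A_x)$ by independence, obtain positivity of $\pr(A_x^{-+})$ from the choice of $c$ (every factor is positive) together with summability of $\sum_d(d-1)F(cd^{1-\ga})$ via the upper bound in~\eqref{eq:two-sided-F}, and get the regeneration structure by rerunning Lemma~\ref{lem:cut} and Proposition~\ref{prop:regeneration}.

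The only difference is in the walk factors. The paper combines the SLLN tail event $\bigcap_{k>N_0}\{S[x,x+k]\le ck\}$ with a finite initial segment via Harris' inequality, using $0\in\operatorname{supp}\go$. Your geometric ladder-epoch argument is the standard negative-drift fact and needs neither. Your explicit check that the nearest-neighbour bypass makes the portal unnecessary is a useful addition.
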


\begin{proof}
	For $N\ge1$ let $D_N:=\bigcap_{k>N}\{S[x,x+k]\le ck\}$.
	Then $D_N\uparrow$ and $\pr(\bigcup_ND_N)=1$ by the strong law of large numbers, so $\pr(D_{N_0})>1/2$ for some $N_0$.
	The event $\bigcap_{k\le N_0}\{S[x,x+k]\le ck\}$ has positive probability,
		since $0$ is in the support of $\go$, and both events are decreasing in the weights,
		so by Harris' inequality $\pr(A_x^{++})\ge\pr(\bigcap_{k\le N_0}\{\cdots\})\pr(D_{N_0})>0$.
	Similarly $\pr(A_x^{--})>0$.
	By independence of the crossing edges,
	\begin{align*}
		\pr(A_x^{-+})=\prod_{j,k\ge1}\bar F\big(c(j+k)^{1-\ga}\big)=\exp\Big(\sum_{m\ge2}(m-1)\log\bar F(cm^{1-\ga})\Big),
	\end{align*}
	and since $F(cm^{1-\ga})\le Cm^{-(\theta-\eps)(\ga-1)}$ for large $m$ by~\eqref{eq:two-sided-F},
		with $\log(1-v)\ge-2v$ for $v\le1/2$,
		the tail of the exponent is bounded below by a convergent series when $\theta(\ga-1)>2$.
	The finite initial factors are positive by the choice of $c$.
	No assertion at critical equality is made.
	The hub event has the cut structure of Lemma~\ref{lem:cut} with $H^\pm_x=A_x^{\pm\pm}$ and $H^\times_x=A_x^{-+}$,
		and the localization property holds by the same computation,
		so the proof of Proposition~\ref{prop:regeneration}.(i) applies.
\end{proof}

\begin{thm}[Sharp moment thresholds]\label{thm:momhub}
	Assume $\ga>1+2/\theta$, $\E\go<\infty$, and let $q>0$.
	\begin{enumeratei}
		\item\label{momhub:i} If $q>\theta(\ga-1)-1$, then $\E^0(L^q)=\infty$, and $\E(\hub_1^{q-1})=\infty$ if $q>1$.
		\item\label{momhub:ii} If $q<\theta(\ga-1)-1$ and $\E(\go^p)<\infty$ for some $p>2q+6$, then $\E^0(L^q)<\infty$,
				and $\E(\hub_1^{q-1})<\infty$ if $q>1$.
	\end{enumeratei}
	In particular, when $\go$ has all moments, $\E^0(L^q)<\infty$ if and only if $q<\theta(\ga-1)-1$, up to the boundary case.
	For integer $k$, excluding the critical equality $\ga=1+(k+1)/\theta$,
		this reads $\E^0(L^k)<\infty\iff\ga>1+(k+1)/\theta$.
\end{thm}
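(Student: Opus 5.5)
Both parts reduce to tail estimates for the stationary first hub $\hub_1$, via the stationary--Palm identity~\eqref{eq:palm-stat} (which holds for the nearest-neighbor hub process by Lemma~\ref{lem:nn-hub}). From $\pr(\hub_1\ge v)=\E^0((L-v+1)_+)/\E^0L$ we get, for $q>1$, that $\E(\hub_1^{q-1})<\infty$ if and only if $\E^0(L^q)<\infty$. For $q\le1$, $\E^0 L^q\le(\E^0L)^q<\infty$ trivially, which is consistent with both parts because $\gb:=\theta(\ga-1)-1>1$. So it suffices to work with the Palm tail $\pr^0(L>u)$.

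For \ref{momhub:i} I will give a lower bound by a single forbidden edge, as in the lower bound of Lemma~\ref{lem:L-tail}. The edges $\{0,d\}$, $d\ge2$, lie in no nearest-neighbor cell and strictly cross no point outside $(0,d)$. Hence they are independent of $A_0=A_0^{++}\cap A_0^{--}\cap A_0^{-+}$, and also of each other. If $t_{0,d}\le c\,d$, then every $x\in(0,d)$ fails $A_x^{-+}$, and therefore $L\ge d$. This gives
\[
\pr^0(L>u)\ge1-\prod_{d>u}\big(1-F(cd^{1-\ga})\big)\ge c'\sum_{d>u}F(cd^{1-\ga})\ge c'u^{-\gb-\eps},
\]
where the last step uses the lower bound in~\eqref{eq:two-sided-F}. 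If $q>\gb$, choose $\eps<q-\gb$; then $\sum_u u^{q-1}\pr^0(L>u)=\infty$, so $\E^0L^q=\infty$.

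For \ref{momhub:ii}, fix $q<\gb$ and aim for $\pr(\hub_1>u)\le Cu^{-(q-1)-\gd}$, or equivalently $\pr(\text{no hub in }(0,u])\le Cu^{1-q-\gd}$. I will follow the proof of Lemma~\ref{lem:second-moment}, replacing $\Hloc$ by the ladder set $\gL_{\mathrm{nn}}$ and $\cD$ by the cheap crossing edges $\{t_e\le c\,d_e\}$, which have probabilities $p_d=F(cd^{1-\ga})\le Cd^{-\gb-1+\eps}$. This requires three steps.

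\emph{Step 1 (density of ladder points).} $\gL_{\mathrm{nn}}$ is a stationary renewal set that depends only on the nearest-neighbor weights, and is therefore independent of the long edges. The increments $\go_{i,i+1}-c$ have negative mean and a finite $p$-th moment. As in Lemma~\ref{lem:loc}, truncate the ladder conditions at horizon $h$ and use Nagaev's bound $\pr(S[0,r]>cr)\le Cr^{1-p}$. This gives $\pr(\gL_{\mathrm{nn}}\cap(0,u]=\eset)\le Cu^{1-r}$ for all $r<p-1$. It follows that every window of length $D=u^\kappa$ in $(0,u]$ contains a ladder point, except on an event of probability $Cu^{1-\kappa r}$.

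\emph{Step 2 (splitting the crossing edges).} Choose greedy ladder points $m_1<\dots<m_{K'}$, at least $cu^{1-\kappa}$ of them, separated by more than $D$. Split the cheap edges into short edges (length below $D$) and long edges. Short edges each cover at most one $m_i$. The probability that a given $m_i$ is covered by a short edge is bounded away from $1$, since $\pr(A^{-+})>0$; this replaces the constant $c_B\le1/8$ used earlier. This produces a stretched-exponential term. The probability that some $m_i$ is covered by a long edge is at most $K'\sum_{d\ge D}dp_d\le Cu^{1-\kappa}u^{-\kappa(\gb-1-\eps)}$.

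\emph{Step 3 (exponents).} We need $\kappa r-1>q-1$ and $\kappa\gb-1>q-1$, each with slack, for some $\kappa<1$. The second condition is exactly $q<\gb$, after taking $\kappa$ close to $1$. The first requires $r>q$, with $r<p-1$ available from Step 1.

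The main obstacle is Step 2, because the long-edge union bound over $K'$ points is lossy. The estimate $K'D^{1-\gb}$ gives exponent $1-\kappa\gb$ only, which suffices for the forward-time tail but is precisely borderline. If it falls short, I would instead use the first-big-edge decomposition~\eqref{eq:first-big} of Lemma~\ref{lem:L-tail}, with the medium-edge count bounded by Poisson-type moments. That argument is where the hypothesis $p>2q+6$ enters: it is needed so that the choice of $r$ and $\kappa$ in Step 1 works with the medium-edge exponent, in the same way as~\eqref{eq:p-stable}.
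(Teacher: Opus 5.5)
Your proposal is correct and follows the paper's proof essentially step for step. Part~(i) uses a single cheap edge $\{0,d\}$ that is independent of the hub event at $0$; the paper phrases this through $\hub_1\ge\gs_1$ and the Palm law of $\mvgs$, which is the same bound. Part~(ii) uses spaced points of $\gL_{\mathrm{nn}}$, the short/long split of the cheap crossing edges, the union bound $K'\sum_{d\ge D}d\,p_d$, and the factor $(1-\pr(A_0^{-+}))^{K'}$.

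Your closing worry about Step~2 is unfounded. Since $q<\gb$ strictly, taking $\kappa$ near $1$ and $\eps$ small gives the exponent $1-\kappa(\gb-\eps)<1-q$ with slack, so no first-big-edge fallback is needed. In the paper, $p>2q+6$ enters only through the conservatively stated range $r<(p-2)/2$ of Lemma~\ref{lem:loc}, combined with $\kappa r>q+1$. Your direct use of the $u^{1-r}$, $r<p-1$, bound from that lemma's proof needs even less.
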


The statements about $\hub_1$ follow from those about $L$ by~\eqref{eq:palm-stat}, since for $q>1$,
	$\E(\hub_1^{q-1})<\infty$ if and only if $\E^0(L^q)<\infty$.
The proof of~\ref{momhub:i} uses a union over long edges from zero,
	and the proof of~\ref{momhub:ii} is the spacing argument of Lemma~\ref{lem:second-moment}.

\begin{proof}[Proof of~\ref{momhub:i}]
	Since hubs are points of $\mvgs$, $\hub_1\ge\gs_1$.
	The events $A_0^{++}$, $A_0^{--}$ are independent of the crossing edges,
		so under $\pr^0$ the set $\mvgs\cap(0,\infty)$ has its Palm law $\pr^{0,\gs}$ and $\E^0(L^q)\ge\E^{0,\gs}(\gs_1^q)$.
	If some edge $\{0,j\}$ with $j\ge m$ satisfies $t_{0,j}\le cj$,
		every point of $(0,j)$ is crossed by a cheap edge, so $\gs_1\ge m$.
	The edges with left endpoint $0$ are independent of $A_0^{-+}$ and of each other. Hence, using $1-\prod_j(1-x_j)\ge\frac12\min(1,\sum_jx_j)$ and~\eqref{eq:two-sided-F},
	\begin{align*}
		\pr^{0,\gs}(\gs_1\ge m)\ge1-\prod_{j\ge m}\bar F(cj^{1-\ga})\ge\frac12\min\Big(1,\sum_{j\ge m}F(cj^{1-\ga})\Big)\ge c\,m^{1-(\theta+\eps)(\ga-1)}
	\end{align*}
	for large $m$.
	If $q>\theta(\ga-1)-1$, choose $\eps$ with $q>(\theta+\eps)(\ga-1)-1$.
	Then $\sum_mm^{q-1}\pr^{0,\gs}(\gs_1\ge m)=\infty$, so $\E^{0,\gs}(\gs_1^q)=\infty$.
\end{proof}

\begin{proof}[Proof of~\ref{momhub:ii}]
	For $0<q\le1$, finiteness follows from $\E^0L<\infty$.
	In the following tail-sum argument assume $q>1$.
	The set $\gL_{\mathrm{nn}}$ is a stationary renewal set determined by the i.i.d.~sequence $(\go_{m,m+1})$.
	The proof of Lemma~\ref{lem:loc}, with $Y_q=\go_{q,q+1}$,
		$a=c$ and the two envelope conditions omitted,
		shows that its Palm gap has finite moments of every order $r<(p-2)/2$,
		so that $\pr(\gL_{\mathrm{nn}}\cap(0,D]=\eset)\le CD^{1-r}$.
	The crossing structure is a Bernoulli field,
		an edge of length $d\ge2$ is \emph{cheap} independently with probability $p'_d:=F(cd^{1-\ga})\le C_\eps d^{-(\theta-\eps)(\ga-1)}$,
		and a point is a hub if and only if it lies in $\gL_{\mathrm{nn}}$ and is crossed by no cheap edge.
	Define $\gb':=\theta(\ga-1)-2>q-1$ and choose $\eps$ so small that $\sum_{d\ge D}d\,p'_d\le CD^{-\gb'+\eps'}$ with $\gb'-\eps'>q-1$.
	Fix $\kappa\in(0,1)$ so close to $1$ that $\kappa(1+\gb'-\eps')>q$,
		and $r<(p-2)/2$ with $\kappa r>q+1$, which is possible because $p>2q+6$.
	With $D=\floor{u^\kappa}$ and $cu^{1-\kappa}\le K'\le C(u/D+1)$ spaced points of $\gL_{\mathrm{nn}}\cap(0,u]$ as in the proof of Lemma~\ref{lem:second-moment},
	\begin{align*}
		\pr\big(\text{no hub in }(0,u]\big)\le Cu^{1-\kappa r}+K'\sum_{d\ge D}d\,p'_d+\big(1-\pr(A_0^{-+})\big)^{K'}\le Cu^{-(q-1)-\gd}
	\end{align*}
	for some $\gd>0$.
	By~\eqref{eq:palm-stat}, $\pr(\hub_1\ge v)=\E^0((L-v+1)_+)/\E^0(L)$,
		and $\sum_vv^{q-2}\E^0((L-v+1)_+)\asymp\E^0(L^q)$,
		so the bound $\pr(\hub_1\ge v)\le Cv^{-(q-1)-\gd}$ gives $\E^0(L^q)<\infty$.
\end{proof}

\bibliographystyle{alpha}
\bibliography{lrfpp}
\end{document}